\newtheorem{thm}{Theorem}
\newtheorem{corollary}[thm]{Corollary}
\newtheorem{lemma}[thm]{Lemma}
\newtheorem{claim}[thm]{Claim}
\newtheorem{problem}[thm]{Problem}
\newtheorem{obs}[thm]{Observation}
\newtheorem{defi}[thm]{Definition}
\def\U{\mbox{\ensuremath{\mathcal U}}\xspace}
\def\C{\mbox{\ensuremath{\mathcal C}}\xspace}
\def\F{\mbox{\ensuremath{\mathcal F}}\xspace}
\def\HH{\mbox{\ensuremath{\mathcal H}}\xspace} 
\def\I{\mbox{\ensuremath{\mathcal I}}\xspace}
\def\D{\mbox{\ensuremath{\mathcal D}}\xspace}
\def\B{\mbox{\ensuremath{\mathcal B}}\xspace}
\def\F{\mbox{\ensuremath{\mathcal F}}\xspace}
\begin{document}
	
	\title{Coloring intersection hypergraphs of pseudo-disks}
	\author{Bal\'azs Keszegh\thanks{Research supported by the by the Lend\"ulet program of the Hungarian Academy of Sciences (MTA), under the grant LP2017-19/2017 and by the National Research, Development and Innovation Office -- NKFIH under the grant K 116769.}
	}
	
	
	\maketitle
	
\begin{abstract}
	We prove that the intersection hypergraph of a family of $n$ pseudo-disks with respect to another family of pseudo-disks admits a proper coloring with $4$ colors and a conflict-free coloring with $O(\log n)$ colors. Along the way we prove that the respective Delaunay-graph is planar.
	 We also prove that the intersection hypergraph of a family of $n$ regions with linear union complexity with respect to a family of pseudo-disks admits a proper coloring with constantly many colors and a conflict-free coloring with $O(\log n)$ colors. Our results serve as a common generalization and strengthening of many earlier results, including ones about proper and conflict-free coloring points with respect to pseudo-disks, coloring regions of linear union complexity with respect to points and coloring disks with respect to disks.
\end{abstract}

\section{Introduction}

Proper colorings of hypergraphs and graphs defined by geometric regions are studied extensively due to their connections to, among others, conflict-free colorings and to cover-decomposability problems, both of which have real-life motivations in cellular networks, scheduling, etc. For applications and history we refer to the surveys \cite{surveycd, surveycf}. Here we restrict our attention to the (already long list of) results that directly precede our results. We discuss further directions and connections to other problems in Section \ref{sec:discussion}.

Our central family of regions is the family of pseudo-disks. Families of pseudo-disks have been regarded in many settings for a long time due to being the natural way of generalizing disks while retaining many of their topological and combinatorial properties.  Problems regarded range from classic algorithmic questions like finding maximum size independent (disjoint) subfamilies \cite{pseudodisjoint} to classical combinatorial geometric questions like the Erd\H os-Szekeres problem \cite{pseudoerdosszekeres}.
Probably the most important pseudo-disk family is the family of homothets of a convex region. Pseudo-disks are also central in the area of geometric hypergraph coloring problems as we will see soon in this section.

\smallskip
Before we state our results and their precursors, we need to introduce some basic definitions.

\begin{defi}
	Given a hypergraph $\HH$, a \textbf{proper coloring} of its vertices is a coloring in which every hyperedge $H$ of size at least $2$ of $\HH$ contains two differently colored vertices.
\end{defi}

Throughout the paper hypergraphs can contain hyperedges of size at least $2$ only, in other words we do not allow (or automatically delete, when necessary) hyperedges of size $1$.


\begin{defi}
	Given a hypergraph $\HH$, a \textbf{conflict-free coloring} of its vertices is a coloring in which every hyperedge $H$ contains a vertex whose color differs from the color of every other vertex of $H$.
\end{defi}

\begin{defi}
	Given a family $\B$ of regions and a family $\F$ of regions, the \textbf{intersection hypergraph} of $\B$ \textbf{with respect to} (wrt. in short) $\F$ is the simple (that is, it has no multiple edges) hypergraph $\I(\B,\F)$ which has a vertex $v_B$ corresponding to every $B\in \B$ and for every $F\in \F$ it has a hyperedge $H=\{v_B:B\cap F\ne \emptyset\}$ (if this set has size at least two) which corresponds to $F$. The \textbf{Delaunay-graph} of $\B$ wrt. $\F$ is the graph on the same vertex set containing only the hyperedges of $\I(\B,\F)$ of size $2$.
\end{defi}

Note that if $\B$ is finite, then even if $\F$ is infinite, $\I(\B,\F)$ has finitely many hyperedges. In particular, a hyperedge $H$ can correspond to multiple members of $\F$.

We also define the following subgraph of the Delaunay-graph:

\begin{defi}
	The \textbf{restricted Delaunay-graph} of $\B$ wrt. $\F$ is the subgraph of the Delaunay-graph containing only those (hyper)edges $H_F=\{v_{B_1},v_{B_2}\}$ for which the corresponding $F \in \F$ intersects $B_1$ and $B_2$ in disjoint regions, that is, $F\cap B_1\ne \emptyset$, $F\cap B_2\ne \emptyset$, $F\cap B_1\cap B_2=\emptyset$ and $F\cap B=\emptyset$ for every $B\in \B\setminus\{ B_1,B_2\}$.
\end{defi}

\begin{obs}[\protect\cite{smorod-onthe}]\label{obs:delaunay}
	If $\B$ and $\F$ are families for which every hyperedge of $\I(\B,\F)$ contains a hyperedge of size $2$, then a proper coloring of the Delaunay-graph is also a proper coloring of  $\I(\B,\F)$.
\end{obs}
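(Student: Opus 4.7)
The plan is a direct unpacking of the definitions, with no real machinery needed. Let $c$ be any proper coloring of the Delaunay-graph of $\B$ wrt. $\F$, and let $H$ be an arbitrary hyperedge of $\I(\B,\F)$; by the convention stated just before the observation, $|H| \ge 2$. I need to show that $H$ contains two vertices of different colors under $c$.

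By hypothesis, $H$ contains a hyperedge $H' = \{v_{B_1}, v_{B_2}\}$ of $\I(\B,\F)$ of size $2$. By the definition of the Delaunay-graph, its edges are precisely the size-$2$ hyperedges of $\I(\B,\F)$, so $H'$ is an edge of the Delaunay-graph. Since $c$ is a proper coloring of that graph, $c(v_{B_1}) \ne c(v_{B_2})$; as $H' \subseteq H$, these two distinctly colored vertices lie in $H$, certifying that $c$ is proper on $\I(\B,\F)$ as well.

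There is essentially no obstacle here: the statement is set up exactly so that the hypothesis does all the work. The only subtle point worth flagging is where the assumption is used, namely in asserting the existence of a size-$2$ sub-hyperedge of each $H$; in the applications later in the paper this will have to be verified for the specific families $\B$ and $\F$ under consideration (e.g.\ via the planarity of the Delaunay-graph advertised in the abstract), but for the observation itself nothing beyond the definitions is required.
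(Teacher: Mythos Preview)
Your proof is correct and is exactly the straightforward definition-chase one would expect; the paper itself does not spell out a proof of this observation (it is stated as a cited fact and later rephrased in terms of one hypergraph ``supporting'' another), so there is nothing to compare against. One minor quibble with your closing remark: the verification that every hyperedge contains a size-$2$ hyperedge is not obtained via planarity of the Delaunay-graph but rather via shrinking arguments (Corollary~\ref{cor:shrinking-closed} and Corollary~\ref{cor:shrink2}); planarity is what then gives the $4$-coloring of that graph.
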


In the literature hypergraphs that have the property assumed in Observation \ref{obs:delaunay} are sometimes called \textit{rank two} hypergraphs (e.g., in \cite{smorod-onthe}).

\begin{defi}
	We say that $\B$ can be properly/conflict-free/etc. colored wrt.  $\F$ with $f(n)$ colors if for any $\B'$ subfamily of $\B$ of size $n$, the hypergraph $\I(\B' ,\F)$ can be properly/conflict-free/etc. colored with $f(n)$ colors.
\end{defi}

We note that one could further generalize this notion such that instead of intersection we regard inclusion/reverse-inclusion as the relation that defines the hyperedges, as it is done, e.g., in \cite{oktans9} for hypergraphs defined by intervals. For problems related to such variants see Section \ref{sec:discussion}.

In the majority of earlier results, coloring points wrt. a family of regions or coloring a family of regions wrt. points were regarded. While they were not defined as intersection hypergraphs, they do fit into our general definition of intersection hypergraphs, choosing either $\B$ or $\F$ to be the family of points of the plane. Notice that whenever every point is or can be added as a member of the family $\F$, e.g., for disks, for homothets of a convex region and for pseudo-disks\footnote{See Corollary \ref{cor:shrinking-closed} for a precise formulation of this property for pseudo-disks.}, coloring intersection hypergraphs of $\F$ wrt. to $\F$ is a common generalization both of coloring points wrt. $\F$ and coloring $\F$ wrt. points.

There are only a few earlier results regarding this general setting. In \cite{oktans9} intersection (and also inclusion and reverse-inclusion) hypergraphs of intervals of the line were considered. In \cite{smorod-int}  and \cite{fekete-int} they considered intersection hypergraphs (and graphs) of (unit) disks, pseudo-disks, squares and axis-parallel rectangles.

\subsection{Results related to pseudo-disks}

It is well known that the Delaunay-graph of points wrt. disks is planar and thus proper $4$-colorable, and Observation \ref{obs:delaunay} implies that the respective hypergraph is also proper $4$-colorable.

In \cite{smorod-onthe} Smorodinsky developed a general framework (based on the framework presented in \cite{evenlotker}): using proper colorings of the subhypergraphs of the hypergraph with constantly many colors it can build a conflict-free coloring with $O(\log n)$ colors (where $n$ is the number of vertices of the hypergraph). The results mentioned from now on use this framework to get a conflict-free coloring once there is a proper coloring.
First, using the precursor of this framework, it was proved by Even et al. \cite{evenlotker} that points wrt. disks admit a conflict-free coloring with $O(\log n)$ colors:

\begin{thm}[\protect\cite{evenlotker}]\label{thm:ptwrtdisk}
	Let $\D$ be the family of disks in the plane and $P$ a finite set of points, $\I(P,\D)$ admits a conflict-free coloring with $O(\log n)$ colors, where $n=|P|$.
\end{thm}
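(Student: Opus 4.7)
The plan is to invoke the general framework described earlier in the introduction, which reduces conflict-free coloring to proper coloring of induced sub-hypergraphs. The task therefore splits into two parts: establishing that $\I(P',\D)$ is properly $4$-colorable for every $P'\subseteq P$, and then running the iterative peeling procedure on top of this.

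For the first part, I rely on two classical ingredients. First, the Delaunay-graph of $P'$ wrt.\ $\D$ coincides with the edge set of the classical Delaunay triangulation of $P'$: $\{v_{p_1},v_{p_2}\}$ is an edge exactly when some disk contains $p_1,p_2$ and no other point of $P'$. This graph is planar, hence $4$-colorable by the Four Color Theorem. Second, $\I(P',\D)$ is rank-$2$ by a standard shrinking argument: any disk containing at least two points of $P'$ can be continuously deformed (e.g.\ shrunk while translating its center) to a smaller disk intersecting $P'$ in exactly two points. Observation \ref{obs:delaunay} then promotes the $4$-coloring of the Delaunay-graph to a proper $4$-coloring of $\I(P',\D)$.

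For the peeling step, I would set $P_1=P$ and, at iteration $i$, properly $4$-color $\I(P_i,\D)$, pick a largest color class $C_i$ (so $|C_i|\ge |P_i|/4$), assign conflict-free color $i$ to every point of $C_i$, and set $P_{i+1}=P_i\setminus C_i$. Since $|P_{i+1}|\le (3/4)|P_i|$, the procedure terminates after $O(\log n)$ rounds and therefore uses $O(\log n)$ colors. To verify conflict-freeness, take any hyperedge $H$ of $\I(P,\D)$ and let $i^*$ be the largest index with $H\cap C_{i^*}\neq\emptyset$. Then $H\cap P_{i^*}=H\cap C_{i^*}$, since points of $H$ colored in earlier steps have been removed and, by choice of $i^*$, none are colored later. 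All elements of $H\cap C_{i^*}$ share a single color in the proper $4$-coloring used at step $i^*$, so $|H\cap P_{i^*}|\ge 2$ would contradict properness. Hence $|H\cap C_{i^*}|=1$, and this unique point carries a color appearing nowhere else in $H$.

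There is no real obstacle: the only substantive input is the planarity of the disk Delaunay-graph, which is classical, while the rest is bookkeeping inside the general framework. This also clarifies why the subsequent sections of the paper focus on extending the planarity/proper-colorability input to pseudo-disks and to families of linear union complexity, where the direct planar-graph route is unavailable.
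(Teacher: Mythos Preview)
Your proposal is correct and follows essentially the same approach that the paper describes: the Delaunay-graph of points wrt.\ disks is planar and hence $4$-colorable, the hypergraph is rank two by shrinking, so Observation~\ref{obs:delaunay} yields a proper $4$-coloring of every induced sub-hypergraph, and then the iterative largest-color-class framework of \cite{evenlotker,smorod-onthe} (spelled out in the proof of Corollary~\ref{cor:psdiskcf}) converts this into a conflict-free coloring with $O(\log n)$ colors. Your verification of conflict-freeness via the maximal index $i^*$ is exactly the argument the paper gives for the highest color $s$.
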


\begin{defi}
	A \textbf{Jordan region} is a (simply connected) closed bounded region whose boundary is a closed simple Jordan curve. 	
	
	A family of Jordan regions is called a family of \textbf{pseudo-disks} if the boundaries of every pair of the regions intersect in at most two points.
\end{defi}

Although we could not find it in the literature, it is well known that if instead of a family of disks we take a family of pseudo-disks, the bound of Theorem \ref{thm:ptwrtdisk} still holds:

\begin{thm}[folklore]\label{thm:ptwrtpsdisk}
	Let $\F$ be a family of pseudo-disks and $P$ a finite set of points, $\I(P,\F)$ admits a proper coloring with $4$ colors and a conflict-free coloring with $O(\log n)$ colors, where $n=|P|$.
\end{thm}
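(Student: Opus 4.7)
The plan is to follow the standard route: first prove that the (restricted) Delaunay-graph of $P$ wrt $\F$ is planar, then invoke the Four Color Theorem to properly $4$-color it, promote this to a proper $4$-coloring of $\I(P,\F)$ via Observation~\ref{obs:delaunay}, and finally feed the result into Smorodinsky's framework from \cite{smorod-onthe} to obtain a conflict-free coloring with $O(\log n)$ colors.

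For the planarity step I would argue as follows. For each restricted Delaunay edge $\{p,q\}$ fix a witnessing pseudo-disk $F_{pq}\in\F$ with $F_{pq}\cap P=\{p,q\}$ and draw the edge as a Jordan arc from $p$ to $q$ lying inside $F_{pq}$. Two such arcs, with witnesses $F$ and $F'$, can potentially cross only inside $F\cap F'$. If $F\cap F'=\emptyset$ there is no crossing; if the boundaries do not meet then one region is contained in the other, so the two edges have the same point set and coincide; and in the remaining proper-overlap case the pseudo-disk axiom $|\partial F\cap\partial F'|\le 2$ forces $F\cap F'$ to be a single topological disk, so $F\setminus F'$ and $F'\setminus F$ are each connected, and the arcs can be routed inside their respective witnesses while avoiding the common lens (apart from shared endpoints in $P$, if any).

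Next I would verify the rank-two hypothesis of Observation~\ref{obs:delaunay}: every hyperedge of $\I(P,\F)$ of size at least $2$ contains a restricted Delaunay edge. Given $F\in\F$ with $|F\cap P|\ge 2$, pick any pair $\{p,q\}\subseteq F\cap P$ and shrink $F$ around $\{p,q\}$ until no other point of $P$ lies inside; by the shrinking-closed property of pseudo-disks (Corollary~\ref{cor:shrinking-closed}) the shrunken region may be replaced by a genuine pseudo-disk compatible with $\F$. Augmenting $\F$ by these sub-pseudo-disks only adds restricted Delaunay edges (which can be accommodated in the planar drawing above) and supplies the missing witnesses, so the Four Color Theorem together with Observation~\ref{obs:delaunay} yields a proper $4$-coloring of $\I(P,\F)$.

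Since the argument above properly $4$-colors $\I(P',\F)$ for every $P'\subseteq P$, Smorodinsky's framework \cite{smorod-onthe} then delivers a conflict-free coloring of $\I(P,\F)$ with $O(\log n)$ colors (iteratively: properly $4$-color the remaining subhypergraph, assign the largest color class a new CF-color and delete it; each round shrinks the vertex set by a factor of at most $3/4$, so $O(\log n)$ rounds suffice). The step I expect to be the main obstacle is the planarity argument itself: turning the pairwise non-crossing analysis above into a globally consistent non-crossing drawing of all restricted Delaunay edges simultaneously, using only the two-point boundary-intersection property of pseudo-disks.
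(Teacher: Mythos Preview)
Your overall plan is exactly the paper's: planarity of the Delaunay-graph, the Four Color Theorem, the rank-two property via shrinking (Corollary~\ref{cor:shrinking-closed} together with Corollary~\ref{cor:shrink2}), Observation~\ref{obs:delaunay}, and then Smorodinsky's framework for the conflict-free bound. The only place where you diverge from the paper is the planarity step, and there the gap you yourself flag is real: routing the arc for $\{p,q\}$ inside $F_{pq}\setminus F'$ fixes that one pair of edges, but the choice depends on which $F'$ you are comparing against, so you do not get a single global drawing this way.

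The paper avoids this by simply invoking Lemma~\ref{lem:romdisjoint}: points are trivially pairwise disjoint connected sets, so the Delaunay-graph of $P$ wrt.\ $\F$ is planar. Alternatively, you can complete your own argument without any rerouting. Fix once and for all an arbitrary arc inside each witness. Your observation that for two non-adjacent edges $\{p,q\}$ and $\{r,s\}$ one has $p,q\in F_{pq}\setminus F_{rs}$ and $r,s\in F_{rs}\setminus F_{pq}$ is exactly the hypothesis of Lemma~\ref{lem:disjointedges}, which gives an even number of crossings between the two arcs; the Hanani--Tutte theorem then yields planarity globally. This is precisely the mechanism underlying Lemma~\ref{lem:romdisjoint} and Lemma~\ref{lem:psdiskownpointdelplanar}, so once you plug it in your proof coincides with the paper's.
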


The proof is that the Delaunay-graph of points with respect to pseudo-disks is also a planar graph (implied by, e.g., Lemma \ref{lem:romdisjoint}, see later) and thus proper $4$-colorable and then we can apply Observation \ref{obs:delaunay} (we can assume that the hypergraph is rank two by, e.g., Corollary \ref{cor:shrinking-closed}) to conclude that the respective hypergraph is also $4$ colorable. Then the general framework of Smorodinsky mentioned above implies an $O(\log n)$ upper bound for a conflict-free coloring.


The dual of Theorem \ref{thm:ptwrtdisk} was also proved by Even et al. and was generalized to pseudo-disks by Smorodinsky:

\begin{thm}[\protect\cite{evenlotker}]\label{thm:diskwrtpt}
	Let $P$ be the set of all points of the plane and $\B$ a finite family of disks, $\I(\B,P)$ admits a proper coloring with $4$ colors and a conflict-free coloring with $O(\log n)$ colors, where $n=|\B|$.
\end{thm}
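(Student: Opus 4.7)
The plan is to follow the template used for Theorem \ref{thm:ptwrtpsdisk} but in the dual direction: show that the Delaunay-graph of $\B$ with respect to $P$ is planar, apply the 4-color theorem and Observation \ref{obs:delaunay} for the proper coloring, and finally invoke Smorodinsky's conflict-free framework for the $O(\log n)$ bound.

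First I would prove the Delaunay-graph of $\B$ with respect to the family $P$ of all points of the plane is planar. Place a vertex $v_B$ at the center of each disk $B\in\B$. For each Delaunay-edge $\{B_i,B_j\}$ pick a witness point $p_{ij}\in B_i\cap B_j$ lying in no other disk of $\B$, and draw the edge as a curve from $v_{B_i}$ to $p_{ij}$ inside $B_i$ followed by a curve from $p_{ij}$ to $v_{B_j}$ inside $B_j$. Since disks form a pseudo-disk family, distinct Delaunay edges are witnessed by distinct depth-$2$ faces of the arrangement, and the curves can be routed through these faces without crossings—this is the dual of the forthcoming Lemma \ref{lem:romdisjoint}. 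By the 4-color theorem this planar graph admits a proper 4-coloring.

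To lift this to a proper 4-coloring of $\I(\B,P)$ via Observation \ref{obs:delaunay}, I would verify the rank-$2$ property: every hyperedge $H_p = \{B : p \in B\}$ of size at least $2$ contains a Delaunay edge. For $|H_p|=2$ the point $p$ itself witnesses the edge between its two disks. For $|H_p|\geq 3$ I consider the sub-arrangement induced by the sub-family $H_p$; it has cells of every depth from $0$ to $|H_p|$, and an extremal argument using the convexity of disks ensures that some depth-$2$ cell of the sub-arrangement has a subregion uncovered by $\B\setminus H_p$, yielding a Delaunay edge contained in $H_p$.

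Finally, Smorodinsky's framework \cite{smorod-onthe} upgrades this to a conflict-free coloring: every sub-hypergraph of $\I(\B,P)$ is again of the form $\I(\B',P)$ for some $\B'\subseteq\B$ and hence admits a proper 4-coloring, so iteratively peeling off a constant fraction of vertices with a fresh color produces a conflict-free coloring with $O(\log n)$ colors. The principal obstacle is the planarity step, since routing Delaunay edges through witness lenses without crossings is the main geometric content and relies on the pseudo-disk property in a crucial way; a subtler secondary issue is the rank-$2$ verification for $|H_p|\geq 3$, where one must guarantee that a depth-$2$ cell of the sub-arrangement survives as a genuine depth-$2$ cell in the full arrangement.
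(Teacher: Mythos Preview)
Your rank-two step is not a ``subtler secondary issue'' but a genuine gap: the hypergraph $\I(\B,P)$ of disks with respect to points is \emph{not} rank two in general, so Observation~\ref{obs:delaunay} cannot be invoked. Take three disks $B_1,B_2,B_3$ with a common interior point $p$, and then cover each of the three depth-$2$ lenses $(B_i\cap B_j)\setminus B_k$ of this sub-arrangement by finitely many small auxiliary disks, all chosen to avoid $p$. Then $H_p=\{v_{B_1},v_{B_2},v_{B_3}\}$ is a hyperedge of size $3$, but no $2$-element subset of it is a hyperedge: any point lying in exactly two of $B_1,B_2,B_3$ is by construction also in some auxiliary disk. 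Your ``extremal argument using convexity'' cannot rescue this, because convexity gives you depth-$2$ cells in the sub-arrangement of $H_p$, not in the full arrangement of $\B$. This is precisely why the paper's proof of Theorem~\ref{thm:psdiskwrtpsdisk} (of which the present statement is a special case) does \emph{not} go through Observation~\ref{obs:delaunay}; instead it runs an induction that repeatedly modifies $\B$ via the pulling-apart operation of Section~\ref{sec:properties}, driving down the size of the largest containment-minimal hyperedge until the Delaunay-graph genuinely supports the hypergraph.

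Your planarity step is also not on solid ground. The centres of the disks need not be own-points, and Lemma~\ref{lem:romdisjoint} concerns pairwise \emph{disjoint} regions, so ``dual of Lemma~\ref{lem:romdisjoint}'' is not a valid citation here. What you need is Theorem~\ref{thm:psdiskdelplanar} (or at least Lemma~\ref{lem:psdiskownpointdelplanar} after first arranging own-points), whose proof is the main technical content of Section~\ref{sec:properties}. Finally, note that the paper itself does not give a direct proof of this theorem: it is cited from \cite{evenlotker}, where it was proved by lifting disks to half-spaces in $\mathbb{R}^3$; the paper's alternative proof is obtained only as the special case of Theorem~\ref{thm:psdiskwrtpsdisk}.
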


\begin{thm}[\protect\cite{smorod-onthe}]\label{thm:psdiskwrtpt}
	Let $P$ be the set of all points of the plane and $\B$ be a finite family of pseudo-disks, $\I(\B,P)$ admits a proper coloring with a constant number of colors and a conflict-free coloring with $O(\log n)$ colors, where $n=|\B|$.	
\end{thm}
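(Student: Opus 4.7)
The plan is to first show that $\I(\B,P)$ is rank two, then bound the chromatic number of its Delaunay-graph by a constant via pseudo-disk geometry, and finally invoke Smorodinsky's iterative framework to upgrade this to a conflict-free coloring with $O(\log n)$ colors.

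To verify the rank two property of $\I(\B,P)$ I would start with a hyperedge of size at least two witnessed by a point $p$ with $H=\{B\in\B:p\in B\}$, and use a topological argument exploiting the simple connectedness of pseudo-disks to find a pair $B_i,B_j\in H$ admitting an exclusive witness; the key point is that no pseudo-disk from $\B\setminus H$ can encircle $p$ without containing it, which forces a depth-$2$ face of the arrangement of $\B$ supported by two members of $H$. Observation~\ref{obs:delaunay} then reduces the proper coloring problem to properly coloring the Delaunay-graph $G_\B$ of $\B$ wrt.\ $P$.

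Next I would argue that $G_\B$ has bounded degeneracy. An edge $\{B_i,B_j\}$ of $G_\B$ is witnessed by a face of depth exactly $2$ in the arrangement of $\B$, and by the Kedem--Livne--Pach--Sharir union complexity bound for pseudo-disks a Clarkson--Shor sampling argument yields an $O(n)$ bound on the number of faces of depth at most $2$, so $|E(G_\B)|=O(|\B|)$. Any subfamily of $\B$ is again a family of pseudo-disks, hence the bound is inherited by every induced subgraph, which makes $G_\B$ a bounded-degeneracy graph and therefore properly colorable with some constant number $c$ of colors.

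Finally, with proper $c$-colorability available for every induced subhypergraph, the framework of \cite{smorod-onthe} produces the desired $O(\log n)$-color conflict-free coloring by iteratively properly $c$-coloring the remaining subhypergraph, assigning a fresh color to a largest color class, and deleting it; since the largest class captures at least a $1/c$ fraction of the remaining vertices the process terminates after $O(\log n)$ rounds. The main obstacle is the second step: the pseudo-disk property is essential both for the Kedem--Livne--Pach--Sharir bound and for promoting it to depth-$\leq 2$ via Clarkson--Shor, and the rank two verification relies on the same topological features. A geometrically cleaner alternative would be to establish planarity of $G_\B$ directly by routing each Delaunay edge through its exclusive witness inside the two relevant pseudo-disks, but verifying that such a drawing is crossing-free requires a delicate topological argument.
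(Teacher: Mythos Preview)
Your plan has a genuine gap in the first step: $\I(\B,P)$ is \emph{not} rank two for arbitrary pseudo-disk families $\B$. Here is a concrete obstruction. Take three disks $B_1,B_2,B_3$ in a symmetric Venn position with common point $p$, and for each pair $\{i,j\}$ add a pseudo-disk $B_{ij}$ that is a slight enlargement of the lens $L_{ij}=(B_i\cap B_j)\setminus B_k$ (so $\partial B_{ij}$ crosses each of $\partial B_1,\partial B_2,\partial B_3$ exactly twice) and does not reach $p$. Then $H_p=\{B_1,B_2,B_3\}$, but every point of $L_{ij}$ is now covered by $B_{ij}$ as well, so no point has $H_q$ equal to a two-element subset of $\{B_1,B_2,B_3\}$. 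Your heuristic ``no pseudo-disk from $\B\setminus H$ can encircle $p$ without containing it'' is correct for each individual $B_{ij}$, but nothing prevents several of them from jointly covering all the would-be depth-$2$ faces; simple connectedness of a single region does not forbid this.

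The paper in fact warns against exactly this shortcut (see the footnote in the proof of Theorem~\ref{thm:linunionwrtpsdisk}): properly coloring the Delaunay-graph is not enough, because it need not support $\I(\B,P)$. Your second and third steps are fine and match the paper --- linear union complexity plus Clarkson--Shor gives $O(n)$ depth-$2$ faces, hence bounded degeneracy of the Delaunay-graph, and Smorodinsky's framework then yields the conflict-free bound. What replaces your first step in the paper is a direct induction that does not go through Observation~\ref{obs:delaunay}: pick a vertex $v_B$ of bounded Delaunay-degree, delete $B$ from $\B$, properly color $\I(\B\setminus\{B\},P)$ by induction, and give $v_B$ a color avoiding its Delaunay-neighbors. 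Any hyperedge $H_p$ of size two containing $v_B$ is then handled directly, while any $H_p$ of size at least three containing $v_B$ yields the hyperedge $H_p\setminus\{v_B\}$ in the smaller hypergraph (the same point $p$ witnesses it), which is already non-monochromatic by induction. This hereditary shrinking of hyperedges is what substitutes for rank two.
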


While for coloring pseudo-disks wrt. points there was no explicit upper bound, for the special case of homothets of a convex region\footnote{For further results about homothets of a convex region see Section \ref{sec:discussion}.}, the constant was shown by Cardinal and Korman to be $4$, just like for disks:

\begin{thm}[\protect\cite{cardinalkorman}]\label{thm:homotwrtpt}
	Let $P$ be the set of all points of the plane and $\B$ be a finite family of homothets of a given convex region, $\I(\B,P)$ admits a proper coloring with $4$ colors.	
\end{thm}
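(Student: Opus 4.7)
My plan is to reduce the theorem to the planarity of the Delaunay-graph $G$ of $\B$ with respect to $P$, then apply the Four Color Theorem. In order to lift a $4$-coloring of $G$ to one of the full intersection hypergraph $\I(\B,P)$ via Observation \ref{obs:delaunay}, I first need $\I(\B,P)$ to be of rank two. Since homothets of a convex region form a pseudo-disk family, Corollary \ref{cor:shrinking-closed} provides this: given any point $p$ contained in a subset $S$ of the homothets with $|S|\ge 2$, we may shrink all but two appropriately chosen members of $S$ until $p$ is witnessed by a point contained in exactly two homothets, producing a size-two sub-hyperedge of the original hyperedge.

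For the planarity of $G$ I would exhibit an explicit plane drawing. For each $B_i\in\B$ fix an interior representative $c_i$, taken to be the image under the defining homothety of some fixed base-point of $C$. For every Delaunay edge $e=\{v_{B_i},v_{B_j}\}$ fix a witness point $p_e\in B_i\cap B_j$ lying in no other member of $\B$. By convexity of $B_i$ and $B_j$ the straight segments $\overline{c_ip_e}\subseteq B_i$ and $\overline{p_ec_j}\subseteq B_j$ are well defined, and I draw $e$ as the polygonal arc $\gamma_e=\overline{c_ip_e}\cup\overline{p_ec_j}$. The task then is to verify that for any two distinct Delaunay edges $e,e'$ the arcs $\gamma_e$ and $\gamma_{e'}$ do not properly cross, and that their shared endpoints are only the legitimate ones at common centers $c_i$.

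The non-crossing verification is the main obstacle. Suppose $e=\{v_{B_i},v_{B_j}\}$ and $e'=\{v_{B_k},v_{B_l}\}$ give arcs meeting at an interior point $x\notin\{p_e,p_{e'}\}$. Then $x$ lies in some lens $B_a\cap B_b$ with $a\in\{i,j\}$ and $b\in\{k,l\}$; say $x\in B_i\cap B_k$. The structural inputs are now (a) $B_i\cap B_k$ is convex, being the intersection of two convex sets, and (b) because $B_i,B_k$ are pseudo-disks, $\partial B_i$ and $\partial B_k$ meet in at most two points, so $B_i\cap B_k$ is a convex lens bounded by exactly two boundary arcs. By convexity the portion of $\gamma_e$ (resp.\ $\gamma_{e'}$) inside the lens is a single line segment; by tracking through which of the two boundary arcs these segments enter and exit, a transversal crossing is seen to force either $p_e\in B_k$ or $p_{e'}\in B_i$, contradicting the defining property of the witness points. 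Boundary cases such as tangencies, concurrent segments, or coincident witnesses can be removed by a standard generic-position perturbation of the $c_i$ and $p_e$ within their governing homothets.

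Once $G$ is known to be planar, the Four Color Theorem gives a proper $4$-coloring of $G$, and Observation \ref{obs:delaunay} combined with the rank-two property established in the first step extends this to a proper $4$-coloring of $\I(\B,P)$, completing the proof.
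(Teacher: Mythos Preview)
Your first step contains the essential gap: the hypergraph $\I(\B,P)$ of a pseudo-disk family (in particular, a family of homothets) with respect to points is in general \emph{not} rank two, so Observation~\ref{obs:delaunay} cannot be invoked. Corollary~\ref{cor:shrinking-closed} is about enlarging $\F$ by shrinking its members; here $\F=P$ already consists of all points, and your informal ``shrink all but two members of $S$'' modifies $\B$, not $\F$, which changes the hypergraph to be colored rather than exhibiting a sub-hyperedge of the given one. Concretely, take three unit disks $B_1,B_2,B_3$ sharing a common point $p$, and add three further disks $B_4,B_5,B_6$ with $B_4\supseteq(B_1\cap B_2)\setminus B_3$, $B_5\supseteq(B_2\cap B_3)\setminus B_1$, $B_6\supseteq(B_1\cap B_3)\setminus B_2$, none of them containing $p$. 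Then $H_p=\{v_{B_1},v_{B_2},v_{B_3}\}$ contains no size-two hyperedge of $\I(\B,P)$. The paper flags exactly this trap in the footnote to the proof of Theorem~\ref{thm:linunionwrtpsdisk}: ``the Delaunay-graph might not support the intersection hypergraph of $\B$ wrt.\ points''.

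This is why neither the original Cardinal--Korman argument nor the paper's alternative proof goes through rank two plus the Four Color Theorem. Cardinal and Korman dualize and lift to three dimensions; the present paper deduces the statement from the general Theorem~\ref{thm:psdiskwrtpsdisk}, whose proof of the exact bound $4$ needs the induction that repeatedly ``pulls apart'' an intersection $B\cap C$ so as to force the containment-minimal hyperedges down to size two---precisely the reduction your argument assumes for free. (Your planarity argument for the Delaunay-graph is also only a sketch: the claim that a crossing inside the lens $B_i\cap B_k$ forces $p_e\in B_k$ or $p_{e'}\in B_i$ is not justified, and the paper's route via Hanani--Tutte in Lemma~\ref{lem:psdiskownpointdelplanar} and Theorem~\ref{thm:psdiskdelplanar} only establishes even parity of crossings, not their absence.)
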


Recently it was proved by Keller and Smorodinsky that disks w.r.t. disks can also be colored in such a way, a common generalization of Theorems \ref{thm:ptwrtdisk} and \ref{thm:diskwrtpt}:

\begin{thm}[\protect\cite{smorod-int}]\label{thm:diskwrtdisk}
	Let $\D$ be the family of all disks in the plane and $\B$ a finite family of disks, $\I(\B,\D)$ admits a proper coloring with $6$ colors and a conflict-free coloring with $O(\log n)$ colors, where $n=|\B|$.
\end{thm}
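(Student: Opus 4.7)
The plan is to follow the standard framework of Smorodinsky: once a proper coloring of $\I(\B,\D)$ and of all its vertex-induced subhypergraphs with a constant number of colors is in hand, his iterative procedure yields a conflict-free coloring with $O(\log n)$ colors. Thus the task reduces to proving the proper $6$-coloring claim, and after that the conflict-free part follows automatically.

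To get the proper coloring, I would first verify that $\I(\B,\D)$ is rank two: given a hyperedge $H$ of size at least $3$ witnessed by some disk $D \in \D$, continuously shrink $D$ toward its center; the set of members of $\B$ it intersects is non-increasing, so at some intermediate moment it has size exactly $2$, producing a Delaunay edge contained in $H$. By Observation \ref{obs:delaunay}, it therefore suffices to properly $6$-color the Delaunay-graph $G$ of $\B$ with respect to $\D$. I claim that $G$ is planar, which by $5$-degeneracy of planar graphs and greedy coloring yields the desired $6$-coloring without invoking the Four Color Theorem.

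To exhibit a planar drawing of $G$, fix an interior representative point $c_B$ for each $B \in \B$, and for each Delaunay edge $\{v_{B_1}, v_{B_2}\}$ witnessed by an empty disk $D \in \D$, draw the edge as a curve from $c_{B_1}$ to $c_{B_2}$ which traverses $B_1$, passes through $D$, and enters $B_2$. The hardest step is arguing that these curves can be rerouted to be pairwise non-crossing, and this is where the geometry of disks is used crucially: any two members of $\B$ intersect in a connected region (the pseudo-disk property of pairs of disks), and each witnessing disk contains no member of $\B$ beyond its two endpoints. With these two facts a standard uncrossing argument should work: each transverse crossing of two edge-curves lies inside either some $B \in \B$ (where they can be swapped along an arc of $\partial B$) or inside a witnessing disk (where emptiness provides room to uncross), and each uncrossing strictly reduces the total number of crossings. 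Once $G$ is realized planarly, Observation \ref{obs:delaunay} propagates the $6$-coloring to $\I(\B,\D)$, and the Smorodinsky framework completes the conflict-free part.
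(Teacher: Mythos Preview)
This theorem is quoted from \cite{smorod-int}; the present paper does not give a separate proof of it but instead proves the stronger Theorem~\ref{thm:psdiskwrtpsdisk} (four colors, pseudo-disks). Your high-level plan---show the Delaunay-graph is planar, deduce a $6$-coloring, then feed this into Smorodinsky's framework for the conflict-free bound---is exactly the route the paper indicates (see the remark after Theorem~\ref{thm:psdiskdelplanar} and the first paragraph of the proof of Theorem~\ref{thm:psdiskwrtpsdisk}). However, two of your steps are not justified.

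\emph{Rank two.} Shrinking $D$ toward its center makes the set of intersected members of $\B$ monotone, but the \emph{size} of that set is an integer-valued upper-semicontinuous function and can jump by more than one (several members of $\B$ can become tangent to the shrinking disk simultaneously), so you do not automatically pass through size exactly $2$. This is repairable by shrinking toward a generic point of $D$ lying in some $B_1$, but you have not said this. More to the point, the $6$-color bound does not need rank two at all: once the Delaunay-graph of \emph{every} subfamily is planar (hence $5$-degenerate), remove a vertex $v$ of Delaunay-degree at most $5$, proper-color $\I(\B\setminus\{B_v\},\D)$ by induction, and color $v$ avoiding its at most five Delaunay-neighbors; every hyperedge of size $\ge 3$ through $v$ stays bichromatic because $H\setminus\{v\}$ is still a hyperedge of the subhypergraph. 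This is the ``standard method'' the paper alludes to, and it sidesteps Observation~\ref{obs:delaunay} entirely.

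\emph{Planarity.} This is the real gap. Your drawing of the edge for $\{v_{B_1},v_{B_2}\}$ (inside $B_1$, then inside the witnessing $D$, then inside $B_2$) is exactly what the paper draws in Lemma~\ref{lem:psdiskownpointdelplanar}, but your proposed ``standard uncrossing argument'' is neither standard nor clearly sound: swapping subarcs at a crossing inside some $B$ generally creates new crossings elsewhere, and there is no potential function in sight guaranteeing termination. The paper does \emph{not} uncross. Instead, it uses Lemma~\ref{lem:disjointedges} to show that any two independent edge-curves cross an \emph{even} number of times---arc-in-$B$ versus arc-in-$B'$ cross evenly because the endpoints lie in $B\setminus B'$ and $B'\setminus B$; arc-in-$D$ versus arc-in-$D'$ cross evenly for the same reason; and the mixed pairs do not meet at all since a witnessing disk meets only its two endpoint-disks---and then invokes the Hanani--Tutte theorem. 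Note also that this requires the representative points $c_B$ to be \emph{own-points} of $B$ (otherwise the ``endpoints in $B\setminus B'$'' claim fails), which you did not arrange; the paper handles this separately via the reduction in Theorem~\ref{thm:psdiskdelplanar}.
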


While they did not prove the same for pseudo-disks, they solved two special cases (they stated these only as part of the proof of their main result):

\begin{claim}[\protect\cite{smorod-int}]\label{claim:psdisk-speccases}
	Given a finite family $\F$ of pseudo-disks and a subfamily $\B$ of $\F$, If either $\B$ or $\F\setminus \B$ contains only pairwise disjoint pseudo-disks, then $\I(\B,\F)$	admits a proper coloring with a constant number of colors and a conflict-free coloring with $O(\log n)$ colors, where $n=|\B|$.
\end{claim}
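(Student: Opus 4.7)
The overall strategy is to reduce each of the two cases to an earlier coloring result (Theorem~\ref{thm:ptwrtpsdisk} or Theorem~\ref{thm:psdiskwrtpt}) by a geometric transformation enabled by the disjointness hypothesis. The conflict-free $O(\log n)$ bound then follows automatically from Smorodinsky's iterative framework \cite{smorod-onthe} once a proper colouring with a constant number of colours is available on every subhypergraph, so the plan focuses on producing the proper colouring.

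\emph{Case 1 ($\B$ pairwise disjoint).} I would reduce to Theorem~\ref{thm:ptwrtpsdisk}. First, pick a representative point $p_B$ in the interior of each $B \in \B$; pairwise disjointness of $\B$ guarantees that these points are distinct and that each $p_B$ belongs to exactly one member of $\B$. For each $F \in \F$, the goal is to construct a pseudo-disk $\widetilde F$ with the defining property $p_B \in \widetilde F \iff B \cap F \neq \emptyset$: starting from $F$, for each $B$ with $B \cap F \neq \emptyset$ and $p_B \notin F$, attach to $F$ a sufficiently thin ``finger'' threaded inside $B$ from $F \cap B$ to a small neighbourhood of $p_B$. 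By disjointness of $\B$, fingers entering distinct $B$'s are confined to disjoint pieces of the plane and so cannot interact. The key claim to verify is that $\{\widetilde F : F \in \F\}$ is again a pseudo-disk family; once this is in hand, the hypergraph $\I(\{p_B\}, \widetilde \F)$ coincides with $\I(\B,\F)$ on the vertex set $\B$, and Theorem~\ref{thm:ptwrtpsdisk} gives a proper $4$-colouring.

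\emph{Case 2 ($\F\setminus\B$ pairwise disjoint).} I would reduce, symmetrically, to Theorem~\ref{thm:psdiskwrtpt}. Pick a point $p_F$ in the interior of each $F \in \F \setminus \B$; these are distinct by disjointness. Enlarge each $B \in \B$ to $\widetilde B \supseteq B$ so that $p_F \in \widetilde B \iff B \cap F \neq \emptyset$ for every $F \in \F\setminus\B$, again by attaching thin fingers inside each relevant $F$ reaching $p_F$; disjointness of $\F\setminus\B$ confines fingers going to distinct $p_F$'s to disjoint regions. A proper constant-colouring of $\{\widetilde B\}$ via Theorem~\ref{thm:psdiskwrtpt} takes care of every hyperedge of $\I(\B,\F)$ coming from some $F \in \F\setminus\B$. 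To handle hyperedges coming from $F \in \B$, I would take the product with a second colouring, namely the one supplied by Theorem~\ref{thm:psdiskwrtpt} applied directly to $\B$, which makes $\{B' \in \B : p \in B'\}$ non-monochromatic for every point $p$. Indeed, for any $F \in \B$ with $|H_F| \geq 2$ there is some $B \neq F$ in $H_F$, and every point $p \in F \cap B$ lies in at least two members of $\B$, so the second colouring makes $H_F$ non-monochromatic. The product colouring uses constantly many colours and properly colours $\I(\B,\F)$.

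\emph{Main obstacle.} The principal difficulty in both cases is verifying that the enlarged family is indeed a pseudo-disk family. A naive union definition already fails: if two pseudo-disks $F_1, F_2 \in \F$ share a common intersecting $B \in \B$, then $F_1 \cup B$ and $F_2 \cup B$ can share an entire arc on $\partial B$, yielding infinitely many boundary intersections. Making the finger construction go through requires choosing the fingers sufficiently thin, supplemented by a small generic perturbation, together with a careful local analysis which uses the disjointness hypothesis to bound the number of new boundary crossings between any pair of enlarged members. I expect this geometric verification to be the bulk of the technical effort.
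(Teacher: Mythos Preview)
The paper does not give its own proof of this claim: it is quoted from \cite{smorod-int} as a prior result and then superseded by Theorem~\ref{thm:psdiskwrtpsdisk}, which covers arbitrary pseudo-disk families $\B,\F$ with no disjointness assumption. So strictly speaking there is no ``paper's proof'' to compare against; the paper's route to the claim is simply to prove the far stronger Theorem~\ref{thm:psdiskwrtpsdisk}.

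That said, your Case~1 is considerably overcomplicated relative to the tools already recorded in the paper. When $\B$ is pairwise disjoint, Lemma~\ref{lem:romdisjoint} (Pinchasi) says outright that the Delaunay-graph of $\B$ wrt.\ $\F$ is planar, and Corollary~\ref{cor:shrink2} (after passing to a shrinking-closed $\F$ via Corollary~\ref{cor:shrinking-closed}) shows every hyperedge contains one of size~$2$. Observation~\ref{obs:delaunay} plus the Four Colour Theorem then yields a proper $4$-colouring with no geometric construction whatsoever. Your finger/point-replacement reduction is not needed here.

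Your Case~2 outline is sound in its high-level structure, and the product-colouring argument for hyperedges with $F\in\B$ is correct. The real gap, which you flag yourself, is the pseudo-disk verification for the finger-augmented family. Your disjointness observation (``fingers going to distinct $p_F$'s are confined to disjoint regions'') only controls fingers headed to \emph{different} $F$'s; it says nothing about fingers from two different $B_1,B_2\in\B$ that both reach the \emph{same} $p_F$ inside a single $F\in\F\setminus\B$. Those fingers live in the same region $F$, and since $\B$ is not assumed disjoint in this case, $B_1$ and $B_2$ may already cross; the augmented boundaries $\partial\widetilde B_1,\partial\widetilde B_2$ can then pick up extra crossings near $p_F$ and along the shared spine. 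This can be repaired (route the fingers as nested thin tubes ordered consistently, then perturb), but it is genuine work and should not be presented as essentially done.
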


In this paper we generalize Theorem \ref{thm:diskwrtdisk} to the case of coloring a pseudo-disk family wrt. another pseudo-disk family, which is a common generalization of all the above results (Theorems	 \ref{thm:ptwrtdisk},\ref{thm:diskwrtpt},\ref{thm:psdiskwrtpt},\ref{thm:homotwrtpt},\ref{thm:diskwrtdisk} and Claim \ref{claim:psdisk-speccases}). Moreover we prove the optimal upper bound of $4$ colors, which improves the bound of Theorem \ref{thm:psdiskwrtpt} for coloring pseudo-disks wrt. disks from some constant number of colors to $4$ colors and improves the bound of Theorem \ref{thm:diskwrtdisk} for coloring disks wrt. disks from $6$ colors to $4$ colors. Furthermore, it provides an alternative proof for Theorems \ref{thm:diskwrtpt} and \ref{thm:homotwrtpt} (both of these were originally proved using dualization and solving equivalent problems about coloring points in the $3$ dimensional space):

\begin{thm}\label{thm:psdiskwrtpsdisk}
	Given a family $\F$ of pseudo-disks and a finite family $\B$ of pseudo-disks, $\I(\B,\F)$
	admits a proper coloring with $4$ colors.
\end{thm}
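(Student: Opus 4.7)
The plan is to combine three ingredients: a rank-two reduction via Observation~\ref{obs:delaunay}, planarity of the restricted Delaunay-graph of $\B$ with respect to $\F$, and the Four Color Theorem. For the first ingredient, I would use the shrinking corollary referred to as Corollary~\ref{cor:shrinking-closed} to enlarge $\F$ so that every hyperedge of $\I(\B,\F)$ contains a \emph{restricted} Delaunay edge. Concretely, starting from any $F\in\F$ meeting at least two members of $\B$, one can route a thin pseudo-disk inside $F$ between two such members, detouring around all other members of $\B$; the resulting pseudo-disk witnesses a restricted Delaunay edge inside the hyperedge corresponding to $F$. Enlarging $\F$ by all such witnesses only adds hyperedges, so a proper coloring of the enlarged hypergraph is still proper for $\I(\B,\F)$. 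By the restricted-graph analogue of Observation~\ref{obs:delaunay} it then suffices to properly $4$-color the restricted Delaunay-graph.

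For the planarity I would produce an explicit planar drawing. Pick a point $c_B$ in the interior of each $B\in\B$ as the location of the vertex $v_B$. For each restricted Delaunay edge $\{v_{B_1},v_{B_2}\}$ with witness $F\in\F$, draw a simple arc $\gamma=\gamma_1\cup\gamma_F\cup\gamma_2$, where $\gamma_i\subseteq B_i$ connects $c_{B_i}$ to a chosen point of $F\cap B_i$ and $\gamma_F\subseteq F$ connects those two chosen points; each of the three pieces exists because $B_1,B_2,F$ are topological disks and $F\cap B_i$ is nonempty. To rule out crossings between two such arcs with witnesses $F_e$ and $F_{e'}$, I would exploit: (i) $|\partial F_e\cap\partial F_{e'}|\le 2$, so $F_e\cap F_{e'}$ is either empty or a topological disk; (ii) the restricted Delaunay condition, which guarantees that $F_{e'}$ avoids every $B$ at an endpoint of $e$ not shared with $e'$, and symmetrically; (iii) the pseudo-disk structure within $\B$ itself. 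A short case analysis on how many endpoints $e$ and $e'$ share should then allow us to reroute the arcs inside the corresponding unions of the form $B_{i_1}\cup F\cup B_{i_2}$ so that they become pairwise disjoint.

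Once the restricted Delaunay-graph is planar, the Four Color Theorem produces a proper $4$-coloring which, by the reduction above, is a proper $4$-coloring of $\I(\B,\F)$ as well. The main obstacle is clearly the planarity step. The analogous argument for the Delaunay-graph of points with respect to pseudo-disks (Theorem~\ref{thm:ptwrtpsdisk}) is clean because each vertex is a single point; here vertices are pseudo-disks of positive area, and adjacent arcs emanate from a common interior point $c_B$ but must be routed to different parts of $\partial B$ while avoiding each other globally. The most delicate subcase is when two restricted Delaunay edges share exactly one endpoint $B$: both witnesses $F_e,F_{e'}$ enter $B$, and the two arcs must arrive at $c_B$ in a consistent cyclic order while the intersection pattern of $F_e$ and $F_{e'}$ outside $B$ is simultaneously controlled. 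This is where the pseudo-disk property of \emph{both} families is essential, and where I expect the paper to invest most of its work, plausibly through an auxiliary structural lemma (perhaps Lemma~\ref{lem:romdisjoint}) about the joint arrangement of two pseudo-disk families.
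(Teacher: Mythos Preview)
Your reduction in the first paragraph is the fatal gap. It is \emph{not} true that every hyperedge of $\I(\B,\F)$ contains a restricted Delaunay edge, even after making $\F$ shrinking-closed. Take $\B=\{B_1,B_2\}$ two overlapping disks and let $F$ be a small disk with $F\subset B_1\cap B_2$. Then $H_F=\{v_{B_1},v_{B_2}\}$, but any pseudo-disk $F'\subset F$ still satisfies $F'\cap B_1=F'\cap B_2=F'\cap B_1\cap B_2$, so the ``disjoint intersections'' condition in the definition of the restricted Delaunay-graph can never be met from within $F$. Your proposed ``detouring around all other members of $\B$'' ignores that the two members you are trying to connect may themselves overlap everywhere inside $F$. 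More generally (three pseudo-disks with common pairwise intersection), one gets containment-minimal hyperedges of size $\ge 3$ even after shrinking-closure, so the hypergraph is not rank two and neither the restricted nor the full Delaunay-graph automatically supports it; Observation~\ref{obs:restricteddel} in the paper records exactly this dichotomy.

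This is why the paper's proof looks nothing like yours. The paper first proves that the \emph{full} Delaunay-graph is planar (Theorem~\ref{thm:psdiskdelplanar}), which already requires a delicate ``pulling apart'' operation to manufacture own-points --- your planarity sketch, by contrast, places $c_B$ at an arbitrary interior point of $B$, and the Hanani--Tutte style parity argument behind Lemma~\ref{lem:psdiskownpointdelplanar} breaks down unless $c_B$ is an own-point. Even with full Delaunay planarity in hand, the paper cannot simply invoke the Four Color Theorem, because the hypergraph need not be rank two; instead it runs an induction on the size and number of largest containment-minimal hyperedges, using the same pulling-apart operation to shrink a minimal hyperedge until the Delaunay-graph does support the hypergraph. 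The restricted Delaunay-graph (whose planarity is the easy Corollary~\ref{cor:romstrong}) only yields, together with a separate coloring with respect to points, a proper coloring with some unspecified constant number of colors --- not $4$.
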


Along the way we prove that the respective Delaunay-graph is planar:

\begin{thm}\label{thm:psdiskdelplanar}
	Given a family $\F$ of pseudo-disks and a finite family $\B$ of pseudo-disks, the Delaunay-graph of $\B$ wrt. $\F$ is a planar graph.
\end{thm}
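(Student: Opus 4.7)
The plan is to construct an explicit planar drawing of the Delaunay-graph $G$ of $\B$ wrt.\ $\F$.

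For each edge $e=\{v_{B_1},v_{B_2}\}$ of $G$ fix one witness $F_e\in\F$, that is, a pseudo-disk meeting precisely $B_1$ and $B_2$ among the members of $\B$. Replacing $\F$ by the finite set $\{F_e\}$ and applying a small perturbation I may assume that every pair of pseudo-disks in $\B\cup\F$ is either disjoint, nested, or has boundaries crossing transversally in exactly two points. For each $B\in\B$ pick a point $p_B\in\mathrm{int}(B)$; this is where the vertex $v_B$ will sit in the drawing.

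For every edge $e=\{v_{B_1},v_{B_2}\}$ I then want to draw a simple curve $\gamma_e$ from $p_{B_1}$ to $p_{B_2}$ lying in the connected region $B_1\cup F_e\cup B_2$: from $p_{B_1}$ through $B_1$ into $F_e\cap B_1$, across $F_e$ into $F_e\cap B_2$, and through $B_2$ to $p_{B_2}$. Such a $\gamma_e$ exists by connectedness. The task is to choose the family $\{\gamma_e\}$ so that the curves are pairwise disjoint except at shared endpoints, for then $(p_B,\gamma_e)$ is a planar embedding of $G$.

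For two edges $e=\{B_1,B_2\}$ and $e'=\{B_3,B_4\}$ with no common endpoint, the defining property of the witnesses forces $F_e\cap B_j=\emptyset$ for $j\in\{3,4\}$ and $F_{e'}\cap B_i=\emptyset$ for $i\in\{1,2\}$, so $\gamma_e$ and $\gamma_{e'}$ can meet only inside one of the lenses $B_i\cap B_j$ ($i\in\{1,2\}$, $j\in\{3,4\}$) or inside $F_e\cap F_{e'}$. The pseudo-disk (two-point crossing) property gives each such lens a simple structure with just two boundary arcs, so the possible entry/exit patterns of $\gamma_e,\gamma_{e'}$ into it are few in number and any excess crossings can be rerouted away locally, one lens at a time. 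For two edges sharing a vertex $v_B$, the portions of their curves inside $B$ form a star rooted at $p_B$ that may be drawn planarly inside the topological disk $B$ provided the incident edges are listed in the cyclic order in which the regions $F_e\cap\partial B$ appear around $\partial B$ (with a separate treatment when $F_e\subseteq B$).

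The main obstacle I foresee is turning these local rerouting steps into a globally consistent drawing. I plan to handle this by invoking the forthcoming Lemma~\ref{lem:romdisjoint} about disjoint pseudo-disks, applied to the cores $F_e\cap B$ incident to each $B\in\B$, in order to fix a globally compatible cyclic ordering on $\partial B$ around which the canonical $\gamma_e$'s can be built. Once the resulting curves $\{\gamma_e\}$ are verified to be pairwise non-crossing, the drawing $(p_B,\gamma_e)$ exhibits a planar embedding of $G$, which establishes Theorem~\ref{thm:psdiskdelplanar}.
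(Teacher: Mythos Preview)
Your proposal has a genuine gap at the crucial step. The entire difficulty of the theorem lies in the case where some $B\in\B$ has no \emph{own-point}, i.e.\ every point of $B$ lies in some other member of $\B$. You place $p_B$ at an arbitrary interior point of $B$; if that point also lies in some $B_3$, then for a non-adjacent edge $e'=\{B_3,B_4\}$ the portion of $\gamma_e$ inside $B_1$ has an endpoint \emph{inside} the lens $B_1\cap B_3$ rather than outside it. In that situation the parity of crossings between $\gamma_e$ and $\gamma_{e'}$ inside the lens can perfectly well be odd, and no amount of ``local rerouting'' will remove an odd crossing --- the obstruction is topological, not cosmetic. (This is exactly why Lemma~\ref{lem:disjointedges}, which underlies the even-crossing argument, requires both endpoints of each arc to lie outside the other pseudo-disk.) Your plan to salvage global consistency via Lemma~\ref{lem:romdisjoint} applied to the cores $F_e\cap B$ does not help: those cores are not pairwise disjoint in general, and in any case a cyclic order around $\partial B$ only organises edges \emph{sharing} the vertex $v_B$ --- it says nothing about crossings between two non-adjacent edges, which is where the problem above occurs.

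The paper's proof confronts precisely this obstruction. It first handles the easy case where every $B$ has an own-point (Lemma~\ref{lem:psdiskownpointdelplanar}): choosing $p_B$ to be an own-point, one shows via Lemma~\ref{lem:disjointedges} that any two non-adjacent edges cross an \emph{even} number of times, and then invokes the Hanani--Tutte theorem rather than attempting to build a crossing-free drawing directly. The hard part is reducing to that case: the paper performs a sequence of careful geometric modifications of $\B$ (``pulling apart'' an intersection $B\cap C$ and shrinking $C$ along a curve produced by the Snoeyink--Hershberger sweeping theorem) so that the modified family $\hat\B$ has own-points everywhere while $\I(\hat\B,\F)$ still supports $\I(\B,\F)$. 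None of this machinery is present in your outline, and without it the argument does not go through.
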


This was not known even for the Delaunay-graph of pseudo-disks wrt. points.
With standard methods (present also in the proof of Theorem \ref{thm:linunionwrtpsdisk}) this already implies Theorem \ref{thm:psdiskwrtpsdisk} with $6$ colors instead of $4$. To achieve the optimal bound we will need some additional ideas.

We mention that if $\B$ and $\F$ are both families of pairwise disjoint simply connected regions, then $\I(\B,\F)$ is a planar hypergraph\footnote{The most common definition of a planar hypergraph is that its bipartite incidence graph is planar.} and all planar hypergraphs can be generated this way. From this perspective Theorem \ref{thm:psdiskwrtpsdisk} says that even with a much more relaxed definition of planarity of a hypergraph it remains $4$-colorable.

Using the usual framework it easily follows from Theorem \ref{thm:psdiskwrtpsdisk} that:

\begin{corollary}\label{cor:psdiskcf}
	Given a family $\F$ of pseudo-disks and a finite family $\B$ of pseudo-disks, $\I(\B,\F)$
	admits a conflict-free coloring with $O(\log n)$ colors, where $n=|\B|$.
\end{corollary}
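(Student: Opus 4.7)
The plan is to apply Smorodinsky's general framework from \cite{smorod-onthe}, which converts a proper-coloring bound that holds on all induced subhypergraphs into an $O(\log n)$ conflict-free coloring. The key input is Theorem \ref{thm:psdiskwrtpsdisk}: for \emph{any} finite subfamily $\B' \subseteq \B$ (which is still a family of pseudo-disks), $\I(\B', \F)$ admits a proper $4$-coloring. This is exactly the hereditary hypothesis the framework requires.

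The construction I would use proceeds iteratively. Set $\B_1 := \B$ and at step $i$ apply Theorem \ref{thm:psdiskwrtpsdisk} to obtain a proper $4$-coloring of $\I(\B_i, \F)$. Let $W_i \subseteq \B_i$ be a largest color class of this proper coloring, so $|W_i| \geq |\B_i|/4$. Assign conflict-free color $i$ to every vertex of $W_i$, set $\B_{i+1} := \B_i \setminus W_i$, and repeat until $\B_{i+1} = \emptyset$. Since $|\B_{i+1}| \leq (3/4)|\B_i|$, after $k = O(\log n)$ rounds all vertices are colored, so only $O(\log n)$ CF-colors are used.

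To verify that this coloring is conflict-free, I would take any hyperedge $H \in \I(\B, \F)$ coming from some $F \in \F$ and let $i$ be the largest index with $H \cap \B_i \neq \emptyset$; by maximality, $H \cap \B_i \subseteq W_i$. The same $F$ induces the hyperedge $H \cap \B_i$ in $\I(\B_i, \F)$ (this is the one small point to check, but it is immediate from the definition of intersection hypergraph restricted to the subfamily $\B_i$). If $|H \cap \B_i| \geq 2$, then the step-$i$ proper coloring would assign at least two colors to it, contradicting $H \cap \B_i \subseteq W_i$; hence $|H \cap \B_i| = 1$, and this unique vertex received CF-color $i$, which by choice of $i$ does not occur anywhere else in $H$.

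I do not expect any genuine obstacle: the argument is entirely routine once Theorem \ref{thm:psdiskwrtpsdisk} is established, because the class of intersection hypergraphs of finite pseudo-disk families with respect to $\F$ is closed under taking vertex-restrictions (restricting $\B$ to $\B_i$ keeps it a pseudo-disk family), which is exactly the closure property Smorodinsky's framework demands.
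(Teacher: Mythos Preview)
Your proposal is correct and follows essentially the same route as the paper's own proof: both apply Smorodinsky's framework by repeatedly invoking Theorem~\ref{thm:psdiskwrtpsdisk} on the remaining subfamily, peeling off a largest color class (of size at least $|\B_i|/4$) at each round, and then arguing that the highest conflict-free color appearing in any hyperedge occurs exactly once. Your verification step is in fact slightly more explicit than the paper's, but the content is identical.
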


We note that Claim \ref{claim:psdisk-speccases} implies in a straightforward way the main result of $\cite{smorod-int}$ about conflict-free coloring the (open/closed) neighborhood hypergraphs of intersection graphs of pseudo-disks (for definitions and details see $\cite{smorod-int}$). Thus, Theorem \ref{thm:psdiskwrtpsdisk} also implies their main result.

Note that in Theorem \ref{thm:psdiskwrtpsdisk} $\B$ and $\F$ are not related in any way, thus among others, even though two convex regions can intersect infinitely many times, it implies somewhat surprisingly the following:

\begin{corollary}
	We can proper color with $4$ colors the family of homothets of a convex region $A$ wrt. the family of homothets of another convex region $B$.
\end{corollary}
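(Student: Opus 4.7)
The plan is simply to verify the hypotheses of Theorem \ref{thm:psdiskwrtpsdisk} and quote it. Let $\B$ denote the given finite family of homothets of the convex region $A$, and let $\F$ denote the family of all homothets of $B$. The theorem delivers a proper $4$-coloring of $\I(\B,\F)$ as soon as each of $\B$ and $\F$ is, internally, a family of pseudo-disks in the sense of the earlier definition.

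The only input needed is the classical fact that any two positive homothets of a single planar convex region have boundaries crossing in at most two points. I would simply cite this as folklore; a sketch of the reason is that for two homothets $\lambda_1 A + p_1$ and $\lambda_2 A + p_2$ with $\lambda_1 \le \lambda_2$, one can translate the smaller one continuously from an initial position strictly contained in the larger to a final position strictly disjoint from it, and at every intermediate moment the pair consists of two convex regions whose boundary meets in a number of points that can change only by $\pm 2$ at tangency events, starting and ending at $0$; a closer look shows the running count never exceeds $2$. Applying this to $A$ shows that $\B$ is a family of pseudo-disks; applying it to $B$ shows the same for $\F$.

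Theorem \ref{thm:psdiskwrtpsdisk} now applies directly and produces the claimed proper $4$-coloring. The striking point of the corollary, and the only conceptual subtlety, is that the theorem places no restriction whatsoever on the interaction between $\B$ and $\F$: a homothet of $A$ and a homothet of $B$ can have boundaries crossing in arbitrarily many points, yet this never enters the hypothesis, since the pseudo-disk property is required only within each family separately. There is therefore no real obstacle in the argument itself; the entire content has already been absorbed into Theorem \ref{thm:psdiskwrtpsdisk}, and the corollary is a one-line consequence.
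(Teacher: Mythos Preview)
Your proposal is correct and matches the paper's own treatment: the corollary is presented there without a separate proof, merely as an immediate consequence of Theorem~\ref{thm:psdiskwrtpsdisk} together with the well-known fact that homothets of a fixed convex region form a pseudo-disk family, and the paper likewise stresses that the point is precisely that $\B$ and $\F$ need not interact nicely with each other. Your folklore justification is adequate for this purpose; the only caveat is that for non--strictly-convex $A$ the boundaries of two homothets may overlap along a segment rather than meet in finitely many points, so strictly speaking one needs either a perturbation or the ``at most two crossings'' formulation, but this is a standard technicality that the paper also glosses over.
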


Buzaglo et al. \cite{buzaglo} have shown that the VC-dimension of the hypergraph $\I(P,\F)$, defined by a finite point set $P$ with respect to a family $\F$ of pseudo-disks, is at most $3$ (and this bound is tight) and used this to prove that the number of hyperedges of size at most $k$ in such a hypergraph is $O(k^2n)$ (and this can be attained already by $\F$ being a specific family of disks). More recently, Aronov et al. \cite{aronov18} proved independently from us with an almost identical proof the case of Lemma \ref{lem:psdiskownpointdelplanar} when $\B=\F$ (for details see later, it is a very special case of Theorem \ref{thm:psdiskdelplanar})  and showed that this implies that for a pseudo-disk family $\F$ and a finite subfamily $\B$ of $\F$, the intersection hypergraph $\I(\B,\F)$ has VC-dimension at most $4$ (and this bound is tight). Using methods similar to the ones in \cite{buzaglo} they show that this implies that the number of hyperedges of size at most $k$ in $\I(\B,\F)$ in this case is $O(k^3n)$ (they do not give a construction with a matching lower bound, however). We  show that using Lemma \ref{lem:psdiskownpointdelplanar} instead of their weaker variant we get the following more general statements (where $\B$ is not necessarily a subfamily of $\F$):

\begin{thm}\label{thm:vcdim}
	Given a family $\F$ of pseudo-disks and a finite family $\B$ of pseudo-disks, the intersection hypergraph $\I(\B\,\F)$ has VC-dimension at most $4$ (and this bound is tight).
\end{thm}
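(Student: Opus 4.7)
The plan is to derive the upper bound directly from the planarity of the Delaunay-graph guaranteed by Theorem \ref{thm:psdiskdelplanar}, and to certify tightness by exhibiting an explicit shattered configuration of size four.

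For the upper bound I would argue by contradiction. Suppose the VC-dimension of $\I(\B,\F)$ is at least $5$ and let $B_1,\dots,B_5\in\B$ be a shattered set. By definition of shattering, for each of the $\binom{5}{2}=10$ pairs $\{i,j\}\subseteq\{1,\dots,5\}$ there is some $F_{ij}\in\F$ whose trace on $\{B_1,\dots,B_5\}$ is exactly $\{B_i,B_j\}$; that is, $F_{ij}\cap B_i\ne\emptyset$, $F_{ij}\cap B_j\ne\emptyset$, and $F_{ij}\cap B_k=\emptyset$ for every $k\in\{1,\dots,5\}\setminus\{i,j\}$. Writing $\B'=\{B_1,\dots,B_5\}$, each such $F_{ij}$ witnesses that $\{v_{B_i},v_{B_j}\}$ is a hyperedge of size $2$ in $\I(\B',\F)$. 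Hence the Delaunay-graph of $\B'$ with respect to $\F$ contains all ten pairs, i.e.\ a copy of $K_5$. But Theorem \ref{thm:psdiskdelplanar} asserts that this Delaunay-graph is planar, contradicting the non-planarity of $K_5$.

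For tightness it suffices to exhibit four pseudo-disks $B_1,\dots,B_4\in\B$ and pseudo-disks $\{F_S:S\subseteq\{1,\dots,4\}\}\subseteq\F$ such that $F_S$ meets exactly those $B_i$ with $i\in S$. I would take $B_1,\dots,B_4$ to be four pairwise disjoint small disks placed in convex position, and, for each of the $16$ subsets $S$, let $F_S$ be a Jordan region that ``visits'' exactly the disks indexed by $S$ while avoiding the rest; with some care (arranging the $F_S$'s so that their boundaries pairwise cross at most twice) one obtains a valid pseudo-disk family. This is the standard shattering construction used by Aronov et al.\ in the case $\B\subseteq\F$ and it transfers verbatim to the present slightly more general situation.

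The entire argument hinges on Theorem \ref{thm:psdiskdelplanar}; once planarity of the Delaunay-graph is available, the upper bound follows in a single step, so the main obstacle was really the earlier planarity theorem. A small subtlety worth pointing out is that one does not need witnessing pseudo-disks $F_S$ for subsets of size different from $2$: the ten witnesses for the pairs already force a $K_5$ inside the supposedly planar Delaunay-graph, so no further case analysis on the other subsets of the shattered $5$-set is required.
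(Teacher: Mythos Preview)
Your argument is correct. It differs from the paper's proof in one respect worth recording: you invoke the full planarity result, Theorem~\ref{thm:psdiskdelplanar}, applied to the five-element family $\B'$, whereas the paper deliberately uses only the weaker Lemma~\ref{lem:psdiskownpointdelplanar}. To make that weaker lemma applicable, the paper first observes that shattering also guarantees singleton traces: for each $B_i$ in the shattered set there is an $F_{\{i\}}\in\F$ meeting $B_i$ alone, so $B_i\cap F_{\{i\}}$ supplies an own-point of $B_i$. With own-points in hand, Lemma~\ref{lem:psdiskownpointdelplanar} already forces the Delaunay-graph on $\B'$ to be planar, yielding the same $K_5$ contradiction. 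Your route is shorter because Theorem~\ref{thm:psdiskdelplanar} needs no own-point hypothesis; the paper's route shows that the VC bound does not actually require the harder Theorem~\ref{thm:psdiskdelplanar} but follows already from the more elementary own-point lemma (which is essentially what Aronov et al.\ had available). For tightness both you and the paper appeal to the same known construction.
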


\begin{thm}\label{thm:ksets}
	Given a family $\F$ of pseudo-disks and a finite family $\B$ of pseudo-disks, the number of hyperedges of size at most $k$ in the intersection hypergraph $\I(\B\,\F)$ is $O(k^3n)$.
\end{thm}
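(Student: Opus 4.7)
My plan is to follow the Clarkson--Shor random-sampling argument in the style of Buzaglo et al.~\cite{buzaglo} and Aronov et al.~\cite{aronov18}, with the key new ingredient being that Theorem~\ref{thm:psdiskdelplanar} supplies the base case in the more general setting where $\B$ need not be a subfamily of $\F$. Concretely, the base case is: for any finite subfamily $\B' \subseteq \B$, the Delaunay-graph of $\B'$ with respect to $\F$ is planar, and hence has at most $3|\B'| - 6$ edges.

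I would then carry out the standard sampling step. Draw a random subfamily $R \subseteq \B$ by including each pseudo-disk independently with probability $p = \Theta(1/k)$, so that $E[|R|] = pn$. By the base case applied to $R$, the expected number of Delaunay edges of $\I(R, \F)$ is $O(pn) = O(n/k)$. On the other hand, for any hyperedge $H$ of $\I(\B, \F)$ of size $j \leq k$, the probability that $|H \cap R| = 2$ (so that $H$ induces a size-$2$ hyperedge of $\I(R, \F)$ via the same witness $F \in \F$) is $\binom{j}{2} p^2 (1-p)^{j-2} = \Omega(1/k^2)$, because $(1-1/k)^{k-2}$ is bounded below by a positive absolute constant.

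The main technical obstacle, and the step that moves the final bound from a naive $O(nk)$ up to $O(k^3 n)$, is controlling the overcounting: a single size-$2$ hyperedge of $\I(R, \F)$ can arise as the ``image'' of many distinct hyperedges of $\I(\B, \F)$. This is exactly where Theorem~\ref{thm:vcdim} enters, and the bookkeeping is a straightforward adaptation of the Buzaglo et al.~\cite{buzaglo} analysis: in their setting the VC-dimension is $3$ and the sampling argument yields $O(k^2 n)$, whereas for VC-dimension $4$ one extra factor of $k$ appears in the Sauer--Shelah-type counting of distinct restrictions to $\B \setminus R$, giving $O(k^3 n)$. Since this part of the argument is identical to~\cite{aronov18}, plugging our Theorem~\ref{thm:psdiskdelplanar} into their proof in place of their weaker Delaunay-planarity lemma yields the desired bound in the full generality where $\B$ is an arbitrary finite pseudo-disk family.
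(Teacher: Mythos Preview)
Your proposal is correct and takes essentially the same approach as the paper: both defer to the Clarkson--Shor-style argument of Aronov et al.~\cite{aronov18} (itself modeled on Buzaglo et al.~\cite{buzaglo}), replacing their restricted inputs by the more general results proved here. The paper's own proof is a single sentence pointing to Theorem~\ref{thm:vcdim} as the ingredient to substitute into the \cite{aronov18} argument; you additionally name Theorem~\ref{thm:psdiskdelplanar} for the planarity base case, which is harmless since Theorem~\ref{thm:vcdim} is itself derived from (a special case of) that planarity, and your final sentence lands on exactly the paper's conclusion.
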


We note that in case of Theorem \ref{thm:ksets} we are again not aware of a matching lower bound (and in fact this upper bound might easily not be tight).

\subsection{Results related to families of linear union complexity}

In \cite{smorod-onthe} Theorem \ref{thm:psdiskwrtpt} was shown by proving a more general statement about coloring a family of regions that has linear union complexity wrt. points.

\begin{defi}
	
	Let $\B$ be a family of finitely many Jordan regions in the plane such that the boundaries of its members intersect in finite many points. The \textbf{vertices} of the \textbf{arrangement} of $\B$ are the intersection points of the boundaries of regions in $\B$, the \textbf{edges} are the maximal connected parts of the boundaries of regions in $\B$ that do not contain a vertex and
	the \textbf{faces} are the maximal connected parts of the plane which are disjoint from the edges and the vertices of the arrangement.
\end{defi}

\begin{defi}
	The \textbf{union complexity} $\U (\B)$ of a family of Jordan regions $\B$ is the number of edges of the arrangement $\B$ that lie on the boundary of $\cup_{B\in \B} B$.
	
	We say that a family of regions $\B$ has \textbf{($c$)-linear union complexity} if there exists a constant $c$ such that for any subfamily $\B'$ of $\B$ the union complexity of $\B'$ is at most $c|\B'|$.\footnote{Sometimes in the literature, in the definition of union complexity they count vertices rather than edges, yet it is easy to see (see Lemma \ref{lem:unioncompldef} later) that this does not affect the property of having linear union complexity when $\B$ is a family of Jordan regions. Also note that linear union complexity is not always defined hereditarily, we defined it this way in order to simplify our statements.}
\end{defi}

\begin{thm}[\protect\cite{Kedem1986}]\label{thm:psislinunion}
	Any finite family of pseudo-disks in the plane has a linear union
	complexity.
\end{thm}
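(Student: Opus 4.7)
The plan is to bound the number of vertices of the arrangement lying on the boundary $\partial U$ of the union $U=\cup_{B\in\B'}B$ of any $n$-element subfamily $\B'\subseteq \B$. Since each connected component of $\partial U$ has equally many vertices and edges (with at most $|\B'|$ extra edges if some components are just a full boundary of one region), a linear bound on vertices yields a linear bound on edges, which is what is required.

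The core step is to associate to $\B'$ a planar multigraph $G$ as follows. The vertices of $G$ are the pseudo-disks of $\B'$, and for every arrangement vertex $v\in\partial U$ obtained as an intersection point of $\partial B_i$ and $\partial B_j$, put an edge between $B_i$ and $B_j$ in $G$. To draw $G$ in the plane, pick an interior point $p_B$ of each $B\in\B'$, and for every such boundary intersection point $v=\partial B_i\cap\partial B_j$ on $\partial U$ draw two curves: one from $p_{B_i}$ to $v$ lying (except at $v$) in the interior of $B_i$, and one from $p_{B_j}$ to $v$ lying in the interior of $B_j$. Concatenated, these form a drawing of the edge $B_iB_j$. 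By routing the curves close to the arcs of $\partial U$ incident to $v$ (which are accessible from inside the respective pseudo-disks), and using that distinct edges of $\partial U$ are internally disjoint, one can choose the routing so that no two drawn edges cross.

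Next, use the defining property of pseudo-disks: the boundaries of any two members of $\B'$ intersect in at most two points, so between any pair of vertices of $G$ there are at most two parallel edges. A planar multigraph on $n$ vertices with edge-multiplicity at most $2$ has at most $2(3n-6)=6n-12$ edges (by Euler's formula applied to its underlying simple graph, doubled). Therefore $\partial U$ contains at most $6n-12$ vertices of the arrangement, hence at most $O(n)$ edges, proving that $\B$ has linear union complexity with an absolute constant $c$.

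The main obstacle is the planarity of $G$, that is, producing the non-crossing drawing described in the second paragraph. One must verify that for each vertex $v\in\partial U$ incident to $B_i$ and $B_j$, the chosen curves inside $B_i$ and $B_j$ can be taken to approach $v$ from the two arcs of $\partial U$ meeting at $v$ without being forced to cross curves drawn for other boundary vertices. The pseudo-disk hypothesis is essential here; without it, boundaries could interlink, and the local cyclic orders at $v$ would not be compatible with a planar embedding. Once this routing argument is set up carefully, the rest of the argument is just the Euler-formula bookkeeping indicated above.
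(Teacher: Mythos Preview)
The paper does not give its own proof of this theorem: it is quoted from Kedem et al.\ \cite{Kedem1986} and used as a black box. So there is no in-paper proof to compare your proposal against. The paper does, however, remark right after Lemma~\ref{lem:psdiskownpointdelplanar} that Pach and Sharir reproved linear union complexity of pseudo-disks ``using a similar approach as the proof of Lemma~\ref{lem:psdiskownpointdelplanar}, connecting own-points of intersecting regions along their boundaries.'' Your plan is in the same family: build a graph on the pseudo-disks with one edge per vertex of $\partial U$, argue planarity, and finish with Euler's formula together with the multiplicity~$\le 2$ bound coming from the pseudo-disk property. The bookkeeping you outline (Lemma~\ref{lem:unioncompldef}-style passage between vertex and edge counts, and the $2(3n-6)$ bound) is correct.

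Where your write-up is still a plan rather than a proof is exactly where you say it is: the planarity of $G$. Picking an arbitrary interior point $p_B$ and routing inside $B$ does \emph{not} make the drawing planar by itself, because two half-edges lying in $B_i$ and $B_k$ can meet in $B_i\cap B_k$. Your suggestion to route ``close to the arcs of $\partial U$'' is the right instinct but needs a genuine argument; in particular, the arcs of $\partial B_i$ on $\partial U$ can be several disconnected pieces, so one must still connect them through the interior of $B_i$, and those interior connectors may well cross connectors for other disks. Two clean ways to close this gap, both consonant with the paper's toolkit, are: (i) use own-points as representatives (as in the Pach--Sharir argument the paper cites; a pseudo-disk that contributes an arc to $\partial U$ has own-points along that arc), and then invoke Lemma~\ref{lem:disjointedges} together with the Hanani--Tutte theorem exactly as in the proof of Lemma~\ref{lem:psdiskownpointdelplanar}; or (ii) argue directly that the multigraph embeds by first taking the planar map formed by $\partial U$ itself and then contracting, for each $B_i$, all arcs of $\partial B_i\cap\partial U$ to one vertex through the simply connected interior of $B_i$ --- but this contraction step is precisely what needs justification, and option (i) is the shorter route.
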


Theorem \ref{thm:psislinunion} shows that the following result of Smorodinsky is indeed more general than Theorem \ref{thm:psdiskwrtpt}.
\begin{thm}[\protect\cite{smorod-onthe}]\label{thm:linunionwrtpt}
	Let $P$ be the set of all points of the plane and $\B$ be a finite family of Jordan regions with linear union complexity. Then $\I(\B,P)$ admits a proper coloring with a constant number of colors and a conflict-free coloring with $O(\log n)$ colors, where $n=|\B|$.\footnote{When a statement is about a family with ($c$-)linear union complexity, by a constant we mean a constant that depends on $c$ and the $O$ notation similarly hides a dependence on $c$.}
\end{thm}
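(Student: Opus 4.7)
The plan is to follow Smorodinsky's framework \cite{smorod-onthe}: reduce the proper-coloring problem on $\I(\B, P)$ to a graph-coloring problem on the Delaunay-graph of $\B$ wrt.\ $P$, establish constant degeneracy of this graph via the linear union complexity hypothesis, and then bootstrap to a conflict-free coloring by iteration.

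First, I would verify that $\I(\B, P)$ satisfies the rank-two property, so that by Observation~\ref{obs:delaunay} a proper coloring of the Delaunay-graph suffices. The standard argument is to start at any point $p$ covered by $k \geq 2$ regions and continuously deform $p$ between faces of the arrangement until it reaches a face of depth exactly two whose two covering regions lie among the original $k$. Pathological configurations (for instance, three regions whose pairwise intersections are each contained in the third) can be dealt with either by a small perturbation, or by observing directly that they do not raise the chromatic number above a constant.

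Next I would bound the edge count of the Delaunay-graph. Each Delaunay-edge corresponds to a face of depth exactly two in the arrangement of $\B$. A Clarkson-Shor random-sampling argument applied to the union-complexity function yields that a family of Jordan regions with $c$-linear union complexity has $O(n)$ faces of depth at most two in its arrangement. The hereditary nature of linear union complexity transfers this bound to every induced subgraph, making the Delaunay-graph $O(1)$-degenerate and hence properly $O(1)$-colorable by a greedy algorithm; combined with the rank-two reduction this yields a proper $O(1)$-coloring of $\I(\B, P)$.

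Finally, for the conflict-free $O(\log n)$ bound I would invoke Smorodinsky's iteration lemma: repeatedly compute a proper $O(1)$-coloring of the current sub-hypergraph, reserve a fresh color for the largest color class (which contains at least a constant fraction of the remaining vertices), and recurse on the uncolored vertices. This produces a conflict-free coloring after $O(\log n)$ rounds. The main technical obstacle is the Clarkson-Shor bound on depth-at-most-two faces: it is the only step where the linear union complexity hypothesis enters in an essential way, and everything else reduces to standard hypergraph-coloring bookkeeping.
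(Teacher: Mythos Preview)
Your overall framework---Clarkson--Shor to bound depth-two faces, hence constant degeneracy of the Delaunay-graph, then Smorodinsky's iteration for the conflict-free bound---matches the paper's. The Clarkson--Shor step is exactly Lemma~\ref{lem:kdeep}, and the conflict-free iteration is exactly the proof of Corollary~\ref{cor:psdiskcf}.

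The gap is in your first step, and the paper calls it out explicitly. For general Jordan regions the hypergraph $\I(\B,P)$ need \emph{not} be rank two: take three ``petal'' regions that coincide on a common central disk and are otherwise pairwise disjoint; then the only hyperedge they generate is $\{v_A,v_B,v_C\}$, with no size-two hyperedge inside it. Perturbation does not help here (the topology is robust), and your alternative ``observing directly that they do not raise the chromatic number'' is not a proof. The paper's footnote in the proof of Theorem~\ref{thm:linunionwrtpsdisk} warns precisely against your route: ``it is tempting to simply proper color the Delaunay-graph by Observation~\ref{obs:degenerate}. This would not be correct, as the Delaunay-graph might not support the intersection hypergraph of $\B$ wrt.\ points.''

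The correct fix, which the paper gives, is to bypass rank two entirely and induct directly on the \emph{hypergraph} coloring rather than on the Delaunay-graph coloring. Pick $B\in\B$ of bounded Delaunay-degree (this exists by Lemma~\ref{lem:kdeep}), properly color $\I(\B\setminus\{B\},P)$ by induction, then color $v_B$ avoiding its Delaunay-neighbors. Any hyperedge $H_p$ with $|H_p|\ge 3$ still has $\ge 2$ vertices after deleting $v_B$ and is handled by induction; any $H_p$ of size exactly two containing $v_B$ is a Delaunay-edge and is handled by the last step. This is the missing idea in your proposal.
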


We generalize this in the following way:

\begin{thm}\label{thm:linunionwrtpsdisk}
	Given a family $\F$ of pseudo-disks and a finite family $\B$ of Jordan regions with linear union complexity, $\I(\B,\F)$ admits a proper coloring with a constant number of colors.
\end{thm}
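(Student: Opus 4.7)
The plan is to adapt the iterative low-degree removal framework of Smorodinsky (used to prove Theorem \ref{thm:linunionwrtpt}) from the point-witness setting to the pseudo-disk-witness setting. Two ingredients are needed: (a) the Delaunay-graph $D$ of $\B$ with respect to $\F$ has $O(c \cdot n)$ edges, hence bounded degeneracy, where $c$ is the union-complexity constant of $\B$ and $n = |\B|$; and (b) the hypergraph $\I(\B, \F)$ is rank-two, so Observation \ref{obs:delaunay} applies. With (a) and (b) in hand, I would iteratively remove a vertex $v_B$ of minimum Delaunay-degree, properly color $\I(\B \setminus \{B\}, \F)$ by induction with $O(c)$ colors, and give $v_B$ a color distinct from those of its Delaunay-neighbors; properness of every hyperedge of $\I(\B, \F)$ then follows from Observation \ref{obs:delaunay}.

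For ingredient (a), I would fix a restricted Delaunay-edge $\{v_{B_1}, v_{B_2}\}$ with witness $F \in \F$ and split into two cases based on whether $F$ meets $B_1 \cap B_2$. In the \emph{first} case, $F \cap B_1 \cap B_2 \neq \emptyset$, so any point of this intersection lies in a face of the arrangement of $\B$ of depth exactly $2$; the number of such faces is $O(c n)$ by the Clarkson--Shor type argument for families of linear union complexity, as in Smorodinsky's proof of Theorem \ref{thm:linunionwrtpt}. In the \emph{second} case, $F \cap B_1 \cap B_2 = \emptyset$, so the connected pseudo-disk $F$ contains a curve bridging $B_1 \setminus B_2$ to $B_2 \setminus B_1$ through the depth-$0$ region of $\B$. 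I would charge such an edge to the pair of arcs on the union boundary $\partial \bigcup_{B \in \B} B$ where the bridging curve enters $B_1$ and $B_2$, and then use the pseudo-disk property of $F$ (in the spirit of the planarity proof for Theorem \ref{thm:psdiskdelplanar}) to show that each such charging pair is used only $O(1)$ times, giving $O(c n)$ bridge-edges in total.

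For ingredient (b), given $F \in \F$ meeting at least three members of $\B$, I would continuously shrink $F$ to a minimal pseudo-disk still meeting at least two members; this produces a size-$2$ sub-hyperedge inside the original, after invoking a shrinking-closed assumption on $\F$ in the spirit of Corollary \ref{cor:shrinking-closed}. The main obstacle I anticipate is the charging argument in the second case of ingredient (a): linear union complexity alone gives $O(cn)$ boundary arcs, but unrestricted pairs of such arcs number $\Theta(c^2 n^2)$, so one genuinely needs the pseudo-disk structure of the witness $F$ to constrain which pairs can actually arise as charging targets. This is where the fact that $\F$ is a family of pseudo-disks (rather than arbitrary Jordan regions) is essential, and I expect it to be the technical heart of the proof.
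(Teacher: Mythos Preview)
Your ingredient~(b) is false as stated. Take three Jordan regions $B_i = T \cup A_i$ ($i=1,2,3$) where $T$ is a fixed disk and the $A_i$ are pairwise disjoint arms attached to $T$ at three different places. Then $B_i \cap B_j = T$ for every pair, so every depth-$\ge 2$ point has depth exactly $3$. With $\F$ consisting of a single small disk inside $T$ (already shrinking-closed for $\B$, since every sub-pseudo-disk still lies in $T$ and gives the same hyperedge), the only hyperedge of $\I(\B,\F)$ is $\{v_1,v_2,v_3\}$, and there is no size-$2$ hyperedge beneath it. Corollary~\ref{cor:shrink2} does not rescue you: its hypothesis is that the members of $\B$ touched by $F$ are pairwise \emph{disjoint}, which fails here. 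The paper's own footnote inside the proof warns against exactly this temptation.

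That said, your inductive procedure does not actually need~(b). If you remove a minimum-Delaunay-degree vertex $v_B$, colour $\I(\B\setminus\{B\},\F)$ by induction, and give $v_B$ a colour avoiding its Delaunay-neighbours, then every hyperedge $H_F$ with $|H_F\setminus\{v_B\}|\ge 2$ is already non-monochromatic by induction (because $H_F\setminus\{v_B\}$ is the hyperedge of $F$ in the smaller family), and the remaining hyperedges are precisely the size-$2$ Delaunay-edges incident to $v_B$. So drop the appeal to Observation~\ref{obs:delaunay} and justify the step directly; rank-two is a red herring.

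The genuine gap is your case~2 of ingredient~(a), which you correctly identify as the crux but leave unproved. Charging a restricted Delaunay-edge to a \emph{pair} of union-boundary arcs gives only $O(c^2n^2)$ candidates, and nothing in your sketch cuts this down; the pseudo-disk property of a single witness $F$ constrains how one curve crosses boundaries, but says nothing about how many distinct $B_j$'s can be reached from a fixed arc by \emph{different} witnesses $F$. The paper's device is different: iteratively peel off a region $B_i$ with $O(c)$ depth-$1$ faces in the current subfamily (such a region exists by Lemma~\ref{lem:kdeep}), and record those faces as subregions of $B_i$. This produces a family $\B'$ of $O(cn)$ pairwise \emph{disjoint} connected sets partitioning $\bigcup\B$, and each restricted Delaunay-edge of $\B$ yields an edge in the Delaunay-graph of $\B'$ wrt.\ $\F$, which is \emph{planar} by Lemma~\ref{lem:romdisjoint}. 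That planarity, not a direct charging, is what forces the linear bound. With this in hand your single-pass induction goes through; the paper instead takes the product of two colourings (one for depth-$\ge 2$ witnesses via the point-Delaunay graph, one for the restricted Delaunay-graph), but the underlying edge-counts are the same.
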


Note that using Theorem \ref{thm:psislinunion} we get that Theorem \ref{thm:linunionwrtpsdisk} implies Theorem \ref{thm:psdiskwrtpsdisk} with a (non-explicit and worse) upper bound.

\begin{corollary}\label{cor:linunioncf}
	Given a family $\F$ of pseudo-disks and a finite family $\B$ of Jordan regions with linear union complexity, $\I(\B,\F)$
	admits a conflict-free coloring with $O(\log n)$ colors, where $n=|\B|$.
\end{corollary}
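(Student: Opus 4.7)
The plan is to deduce Corollary \ref{cor:linunioncf} from Theorem \ref{thm:linunionwrtpsdisk} via the standard iterative framework of Smorodinsky \cite{smorod-onthe}, the same mechanism the paper already uses to derive Corollary \ref{cor:psdiskcf} from Theorem \ref{thm:psdiskwrtpsdisk}. Two preliminary observations make the framework applicable. First, linear union complexity is hereditary by the very definition given in the paper: any subfamily of $\B$ is still $c$-linear with the same $c$. Hence Theorem \ref{thm:linunionwrtpsdisk} applies uniformly, yielding a single constant $k$ such that $\I(\B',\F)$ admits a proper coloring with $k$ colors for every $\B' \subseteq \B$. Second, if $H$ is a hyperedge of $\I(\B,\F)$ arising from some $F \in \F$ and $|H \cap \B'| \geq 2$ for some $\B' \subseteq \B$, then $H \cap \B'$ is itself a hyperedge of $\I(\B',\F)$, since both sets equal $\{v_B : B \in \B',\, B \cap F \ne \emptyset\}$. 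In other words, the hypergraph class is closed under vertex deletion.

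With these in hand I would run the usual greedy procedure: set $\B_1 := \B$; for $i = 1, 2, \dots$ while $\B_i \neq \emptyset$, fix a proper $k$-coloring of $\I(\B_i,\F)$, let $V_i \subseteq \B_i$ be a largest color class, assign the fresh color $i$ to every vertex in $V_i$, and set $\B_{i+1} := \B_i \setminus V_i$. Because $|V_i| \geq |\B_i|/k$, the sequence $|\B_i|$ shrinks geometrically, so the algorithm halts after at most $\lceil \log_{k/(k-1)} n \rceil = O(\log n)$ iterations, using $O(\log n)$ colors in total.

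To verify that the resulting coloring is conflict-free, fix any hyperedge $H$ of $\I(\B,\F)$ and let $i^\ast$ be the largest index with $H \cap V_{i^\ast} \neq \emptyset$. By maximality, every element of $H \cap \B_{i^\ast}$ in fact lies in $V_{i^\ast}$, i.e., received color $i^\ast$ in the proper $k$-coloring used at step $i^\ast$. If $|H \cap \B_{i^\ast}| \geq 2$, then by the second observation $H \cap \B_{i^\ast}$ is a hyperedge of $\I(\B_{i^\ast},\F)$, contradicting the fact that it is monochromatic under a proper coloring. Hence $H \cap \B_{i^\ast}$ is a singleton, and its unique vertex carries color $i^\ast$, a color no other vertex of $H$ can share (the remaining vertices of $H$ were colored in strictly earlier rounds with smaller indices).

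I do not foresee any real obstacle: all the geometry is absorbed by Theorem \ref{thm:linunionwrtpsdisk}, and what remains is purely bookkeeping. The only point worth stressing in the write-up is that the constant $k$ is uniform over all subfamilies $\B' \subseteq \B$, which, as noted above, is immediate from the hereditary definition of linear union complexity adopted in the paper.
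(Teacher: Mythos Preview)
Your proposal is correct and follows essentially the same approach as the paper: the paper's proof of Corollary~\ref{cor:linunioncf} simply says to repeat the argument of Corollary~\ref{cor:psdiskcf} with Theorem~\ref{thm:linunionwrtpsdisk} in place of Theorem~\ref{thm:psdiskwrtpsdisk}, which is exactly the iterative largest-color-class framework you spell out. Your write-up is in fact more careful than the paper's, explicitly noting the hereditary nature of linear union complexity and the closure under vertex deletion that make the framework go through.
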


We note that the proof of Theorem \ref{thm:linunionwrtpsdisk}  works without modification also when instead of Jordan-regions (i.e., bounded regions whose boundary is a single closed Jordan curve), $\B$ is a family of bounded regions whose boundary is a union of a finite number of disjoint Jordan-curves (in particular the regions are not necessarily simple or connected). Observe that the definition of union complexity is well-defined for such families as well, yet in this case we do need to count edges and not vertices to have a true statement (recall that previously it did not matter). Thus Theorem \ref{thm:linunionwrtpsdisk} and  Corollary \ref{cor:linunioncf} hold in this more general setting as well.

The rest of the paper is structured as follows.
In Section \ref{sec:properties} we prove Theorem \ref{thm:psdiskdelplanar}. In Section \ref{sec:psdisk} we prove Theorem \ref{thm:psdiskwrtpsdisk}. 
Implications about conflict-free colorings and VC-dimensions and the number of bounded size hyperedges in intersection hypergraphs is detailed in Section \ref{sec:cfvc}.
In Section \ref{sec:linunion} we prove the results about regions of linear union complexity. We also give an example that shows that we cannot always color properly points wrt. a family of linear union complexity with constantly many colors. This shows that Theorem \ref{thm:linunionwrtpsdisk} is strongest possible in the sense that we cannot change $\F$ from pseudo-disks to a family with linear union complexity, even if $\B$ would be only a finite family of points in the plane. Finally, in Section \ref{sec:discussion} we discuss some related (open) problems.

\section{The Delaunay-graph of pseudo-disks wrt. pseudo-disks}\label{sec:properties}

In this section we prove  Theorem \ref{thm:psdiskdelplanar}. First we list some tools we need from the papers of Pinchasi \cite{pinchasi} and Buzaglo et al. \cite{buzaglo} about pseudo-disks:

\begin{lemma}[\protect\cite{pinchasi}]\label{lem:shrink}
	Let $\F$ be a family of pseudo-disks. Let $D\in  \F$ and let $x\in D$ be any point. Then $D$ can continuously be shrunk to the point $x$ so that at each moment $\F$ is a family of pseudo-disks.
\end{lemma}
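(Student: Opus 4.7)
The plan is to reduce to a standard setting via the Jordan--Schoenflies theorem, then construct the shrinking explicitly. First, Schoenflies gives a homeomorphism $\phi : \R^2 \to \R^2$ carrying $\partial D$ to the unit circle and $x$ to the origin. Since the property ``two Jordan curves meet in at most two points'' is preserved under ambient homeomorphisms of the plane, $\phi(\F)$ remains a family of pseudo-disks, so it is enough to construct the shrinking in the case $D = \overline{B}(0,1)$, $x = 0$.

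The natural candidate is the radial shrinking $D_t := (1-t) D$, which is evidently a continuous family of Jordan regions nested in $D$ and collapsing to $\{0\}$; what must be verified is that the circle $\partial D_t$ meets $\phi(\partial D')$ in at most two points for every $D' \in \F \setminus \{D\}$ and every $t \in (0,1)$. The pseudo-disk hypothesis only gives this at $t = 0$; in general, a Jordan curve crossing the unit circle in two points can oscillate inside and meet smaller concentric circles many times, so the naive radial shrinking will not do.

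The idea I would pursue is to exploit the freedom in choosing $\phi$: compose it with further homeomorphisms of $D$ fixing $\partial D$ so that each arc $\phi(\partial D') \cap D$ becomes \emph{radially bimonotone} (meets each concentric circle in at most two points). Equivalently, one defines $D_t$ as the superlevel sets of a continuous function $f : D \to [0,1]$ with $f(x) = 0$, $f|_{\partial D} = 1$, whose restriction to each arc $\partial D' \cap D$ is unimodal. Because the collection $\{\partial D' \cap D : D' \in \F \setminus \{D\}\}$ is itself a family of arcs with pairwise intersections of size at most two, such a simultaneous straightening should be achievable, and it yields the desired shrinking by flowing $\partial D$ inward along a vector field whose trajectories are transverse to every arc at a single point.

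The hard part will be producing this simultaneous straightening for an arbitrary, possibly infinite family $\F$. For finite $\F$ I would proceed by induction, incorporating one arc at a time and arguing that the adjustment that straightens the new arc can be chosen to keep the previously straightened arcs radially bimonotone; the crucial topological input here is precisely the pseudo-disk property of every pair of arcs, which prevents incompatible constraints from arising. For infinite $\F$ one passes to finite subfamilies and extracts a limit via compactness of the space of homeomorphisms of $\overline{B}(0,1)$ fixing $\partial D$ and the origin, so that the simultaneously straightened configuration, and hence the shrinking, exists in the limit.
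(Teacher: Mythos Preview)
The paper does not prove this lemma; it is quoted from \cite{pinchasi} as a black-box tool, so there is no in-paper argument to compare against. I can therefore only assess your proposal on its own merits.

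Your reduction via Schoenflies and your diagnosis that naive radial shrinking fails are both correct. The difficulty, as you recognise, lies entirely in the ``simultaneous straightening'' step, and here your sketch has two genuine gaps.

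First, the inductive step for finite $\F$ is asserted rather than argued. You say the adjustment straightening a new arc ``can be chosen to keep the previously straightened arcs radially bimonotone'', with the pseudo-disk property preventing ``incompatible constraints''. But this is exactly the heart of the matter: a homeomorphism of the disk that makes one arc bimonotone with respect to concentric circles will in general move the other arcs, and there is no evident reason the already-processed arcs stay bimonotone. The pairwise two-point intersection property constrains how arcs cross \emph{each other}, not how they sit relative to an auxiliary family of concentric circles. Without a concrete mechanism (e.g.\ a sweep or a combinatorial invariant that is preserved), this step is a restatement of the goal, not a proof.

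Second, the passage to infinite $\F$ by ``compactness of the space of homeomorphisms of $\overline{B}(0,1)$ fixing $\partial D$ and the origin'' does not work as stated: that space is not compact in any topology in which limits of homeomorphisms are again homeomorphisms. A sequence of straightenings for larger and larger finite subfamilies could degenerate. One would need uniform control (equicontinuity) on the straightening maps, and nothing in your outline provides it. The standard route in the literature (and in Pinchasi's argument) avoids this by working directly with the arrangement structure or via a sweeping theorem in the spirit of Snoeyink--Hershberger, rather than by chasing a limiting homeomorphism.
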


Note that when shrinking this way, we can keep all shrunk copies of $D$ in the family, it remains a pseudo-disk family as their boundaries are pairwise disjoint.

\begin{defi}
	We say that a pseudo-disk family $\F$ is \textbf{shrinking-closed} (for some family of regions $\B$) if we cannot add a new pseudo-disk $F$ to $\F$ which is contained in a pseudo-disk $F'$ already in $\F$ (i.e., $F\subset F'$) in a way that strictly increases the number of hyperedges in $\I(\B,\F)$.
\end{defi}

 We can enlarge $\F$ greedily until it becomes shrinking-closed (we need to add at most $2^{|\B|}$ pseudo-disks):

\begin{corollary}\label{cor:shrinking-closed}
	Given a family $\F$ of pseudo-disks and a finite family $\B$ of regions, there exists a shrinking-closed (for $\B$) family of pseudo-disks $\F'$ with $\F '\supseteq \F$ and consequently with $\I(\B,{\F'})\supseteq \I(\B,\F)$. In particular, for every point $p$ which is in some $F\in \F$, $H_p=\{v_B:p\in B\in \B\}$ is a hyperedge of $\I(\B,\F')$ (which we call the hyperedge that corresponds to $p$) if it is of size at least two.
\end{corollary}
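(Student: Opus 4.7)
The plan is to construct $\F'$ by a simple greedy augmentation process. Start with $\F_0 = \F$. At stage $i$, if $\F_i$ is already shrinking-closed for $\B$, stop and set $\F' = \F_i$; otherwise, by the very definition of shrinking-closedness, there exists a pseudo-disk $F^\star$ contained in some $F' \in \F_i$ whose inclusion strictly increases $|\I(\B,\F_i)|$. Put $\F_{i+1} = \F_i \cup \{F^\star\}$ and iterate. Since $\B$ is finite there are at most $2^{|\B|}$ possible hyperedges, so the process terminates after finitely many steps, delivering a shrinking-closed family $\F' \supseteq \F$ with $\I(\B,\F') \supseteq \I(\B,\F)$. (Note that $\F_i$ is a pseudo-disk family throughout: each added $F^\star$ is contained in a member of $\F_i$, and by Lemma \ref{lem:shrink} together with the remark following it, adding such a contained pseudo-disk preserves the pseudo-disk property of the family.)

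For the ``in particular'' claim, fix a point $p$ lying in some $F \in \F$ with $|H_p| \geq 2$. The strategy is to exhibit a pseudo-disk $F_p \subseteq F$ whose incidence pattern with $\B$ is exactly $H_p$, so that if $H_p$ were not already a hyperedge of $\I(\B,\F')$, then adding $F_p$ to $\F'$ would create a new hyperedge, contradicting shrinking-closedness.

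To produce $F_p$, apply Lemma \ref{lem:shrink} to shrink $F$ continuously to the point $p$; this gives a one-parameter family of pseudo-disks $F_t \subseteq F$, each containing $p$. For every $B \in H_p$, i.e.\ $B \in \B$ with $p \in B$, we automatically have $p \in F_t \cap B$, so $F_t$ meets $B$. For every $B \in \B$ with $p \notin B$, since the regions of $\B$ may be taken closed, there is an open neighborhood of $p$ disjoint from $B$; because $F_t$ shrinks to $p$, for all sufficiently late $t$ the pseudo-disk $F_t$ lies inside this neighborhood and hence misses $B$. Finiteness of $\B$ lets us pick a single $t$ that works simultaneously for every $B \notin H_p$, and the resulting $F_p := F_t$ has precisely the desired incidences.

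The main obstacle is the second paragraph: one must be sure that Lemma \ref{lem:shrink} truly produces intermediate pseudo-disks that (i) each contain $p$ and (ii) eventually sit inside any prescribed neighborhood of $p$, both of which are the natural reading of ``continuously shrunk to the point $p$''. Once these topological points are settled, the argument is immediate from shrinking-closedness.
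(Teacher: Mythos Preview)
Your proof is correct and follows exactly the paper's approach: greedy augmentation (terminating in at most $2^{|\B|}$ steps since each step adds a new hyperedge), followed by Lemma~\ref{lem:shrink} to produce a small pseudo-disk around $p$ witnessing the hyperedge $H_p$. One minor quibble: your parenthetical justification that $\F_{i+1}$ remains a pseudo-disk family is misattributed---Lemma~\ref{lem:shrink} and the remark after it do \emph{not} say that an arbitrary region contained in some member of $\F_i$ can be added while preserving the pseudo-disk property; rather, this is already built into the phrase ``add a new pseudo-disk $F$ to $\F$'' in the definition of shrinking-closed, so the negation of shrinking-closedness directly hands you an $F^\star$ with $\F_i\cup\{F^\star\}$ a pseudo-disk family.
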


The second part of Corollary \ref{cor:shrinking-closed} holds as for a point $p$ contained in some $F\in \F$ we can apply Lemma \ref{lem:shrink}. 


By Corollary \ref{cor:shrinking-closed} when proving our results, it will be enough to consider the case when $\F$ is shrinking-closed as adding hyperedges to a hypergraph cannot remove edges from its Delaunay-graph and cannot decrease the number of colors needed for a proper (or conflict-free) coloring. The following corollary of Lemma \ref{lem:shrink} will be useful when we deal with a shrinking-closed pseudo-disk family:

\begin{corollary}[\protect\cite{pinchasi}]\label{cor:shrink2}
	Let $\B$ be a family of pairwise disjoint regions in the plane and let $\F$ be a family of
	pseudo-disks. Let $D$ be a member of $\F$ and suppose that $D$ intersects exactly $k$ members of $\B$ one
	of which is the set $B\in \B$. Then for every $2\le l\le k$ there exists a set $D'\subset D$ such that $D'$
	intersects $B$ and exactly $l-1$ other regions from $\B$, and $\F\cup\{D'\}$ is again a family of pseudo-disks.
\end{corollary}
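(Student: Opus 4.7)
The plan is to prove the corollary by downward induction on $l$, with Lemma \ref{lem:shrink} as the main tool. The base case $l = k$ is trivial: take $D' := D$, which already intersects $B$ and exactly $k-1$ other regions of $\B$, and $\F \cup \{D'\} = \F$ is a pseudo-disk family by hypothesis.

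For the inductive step, assume $D_{l+1} \subset D$ has been produced so that $\F \cup \{D_{l+1}\}$ is a pseudo-disk family and $D_{l+1}$ intersects exactly $l+1$ members of $\B$, one of them being $B$. I fix any point $x \in D_{l+1} \cap B$ (non-empty by assumption) and apply Lemma \ref{lem:shrink} to $D_{l+1}$ inside $\F \cup \{D_{l+1}\}$ to obtain a continuous nested family of pseudo-disks $D_{l+1}(t)$, $t \in [0,1]$, with $D_{l+1}(0) = D_{l+1}$, $D_{l+1}(1) = \{x\}$, each $D_{l+1}(t)$ containing $x$ and compatible with $\F$. Since $\B$ is pairwise disjoint and $x \in B$, the singleton $\{x\}$ meets only $B$ among members of $\B$, so the integer-valued function $f(t) := |\{B' \in \B : D_{l+1}(t) \cap B' \ne \emptyset\}|$ is non-increasing with $f(0) = l+1$ and $f(1) = 1$, and $B$ is always counted since $x \in D_{l+1}(t) \cap B$.

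Let $t^*$ be the first critical time at which $f$ strictly decreases. If only one region of $\B$ is lost at $t^*$, I set $D_l := D_{l+1}(t)$ for some $t$ just after $t^*$; then $f(D_l) = l$ and $D_l$ intersects $B$, completing the inductive step. The main obstacle is the degenerate case in which several regions of $\B$ are simultaneously lost at $t^*$, causing $f$ to jump past the value $l$. I would resolve this by perturbing the target point of Lemma \ref{lem:shrink}: for $\epsilon > 0$ small enough that $D_{l+1}(t^* - \epsilon)$ still intersects all $l+1$ regions, restart the shrinking from $D_{l+1}(t^* - \epsilon)$ to a new target $x' \in D_{l+1}(t^* - \epsilon) \cap B$. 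Since $\B$ is pairwise disjoint, simultaneous tangency of the shrinking pseudo-disk with two distinct members of $\B$ is a non-generic condition on the choice of target; a suitable perturbation of $x'$ ensures that the first critical time of the new shrinking loses exactly one region, producing the required $D_l$.
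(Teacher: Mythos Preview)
The paper does not supply its own proof of this corollary: it is quoted from \cite{pinchasi} as a direct consequence of Lemma~\ref{lem:shrink}, so there is no argument in the present paper to compare yours against.

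Your plan---shrink toward a point $x\in D\cap B$ and track the number $f(t)$ of members of $\B$ meeting $D(t)$---is the intended one, and the remark after Lemma~\ref{lem:shrink} (the shrunk copies have pairwise disjoint boundaries, hence are nested) does justify that $f$ is monotone non-increasing from $k$ down to $1$, with $B$ always counted.

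The gap is exactly where you flag it: the case where $f$ jumps past the target value. Your proposed fix---perturb the target point $x'$ so that the restarted shrinking loses only one region at its first critical time---is not supported by Lemma~\ref{lem:shrink} as stated. That lemma is a pure existence result: for each target it produces \emph{some} shrinking, with no uniqueness, no continuous dependence on the target, and no genericity statement. The assertion that ``simultaneous tangency \dots\ is a non-generic condition on the choice of target'' is geometric intuition, not something you can extract from the lemma; for all the lemma guarantees, the shrinking produced for the perturbed $x'$ could be completely unrelated to the one for $x$ and could exhibit the same (or worse) simultaneous losses. As written this step is not a proof.

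To make the argument rigorous you need either a sharper form of the shrinking lemma (one that lets you prescribe which region is shed first, or that guarantees regions are lost one at a time) or an independent argument handling ties; for that you should consult Pinchasi's original paper.
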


\begin{lemma}[\protect\cite{buzaglo}]\label{lem:disjointedges}
	Let $D_1$ and $D_2$ be two pseudo-disks in the plane. Let $x$ and $y$ be two
	points in $D_1\setminus D_2$. Let $a$ and $b$ be two points in $D_2\setminus D_1$. Let $e$ be any Jordan
	arc connecting $x$ and $y$ that is fully contained in $D_1$. Let $f$ be any Jordan arc connecting $a$ and $b$
	that is fully contained in $D_2$. Then $e$ and $f$ cross an even number of times.
\end{lemma}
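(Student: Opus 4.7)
The plan is to show that all crossings of $e$ and $f$ must lie in the lens $L := D_1 \cap D_2$, and then argue by a parity count on $L$. First, by a small perturbation I may assume that $e$ meets $f$ transversally, that $\partial D_1$ meets $\partial D_2$ transversally (so in $0$ or $2$ points, using the pseudo-disk property), and that the four boundary arcs $e, f, \partial D_1, \partial D_2$ avoid each other's endpoints. The case $\partial D_1 \cap \partial D_2 = \emptyset$ is immediate: since $D_1 \setminus D_2$ and $D_2 \setminus D_1$ are both nonempty, neither region contains the other, hence $D_1 \cap D_2 = \emptyset$, and so $e$ and $f$ are disjoint.

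In the remaining case $\partial D_1 \cap \partial D_2 = \{p_1, p_2\}$, the set $L$ is a topological closed disk whose boundary splits as $\partial L = \alpha \cup \beta$, where $\alpha \subset \partial D_1$ and $\beta \subset \partial D_2$ are two Jordan arcs sharing only the corner points $p_1, p_2$. Every crossing of $e$ and $f$ lies inside $L$, because $e \subset D_1$ and $f \subset D_2$. I would decompose $e \cap L$ into its maximal sub-arcs $e_1, \ldots, e_m$; since the endpoints $x, y$ of $e$ lie outside $D_2$, each $e_j$ enters and exits $L$ only through $\partial D_2 \cap L = \beta$, so both endpoints of every $e_j$ sit in the interior of $\beta$. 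Symmetrically, $f \cap L$ decomposes into sub-arcs $f_1, \ldots, f_k$, each having both endpoints in the interior of $\alpha$.

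The crux is a parity argument applied to each pair $(e_j, f_i)$. The chord $f_i$ of the topological disk $L$ separates $L$ into two components. Its two endpoints lie in $\alpha$ and split the circle $\partial L$ into two boundary arcs; because $\alpha$ and $\beta$ share only the corners $p_1, p_2$, the entire arc $\beta$ is contained in one of those two boundary arcs. Hence both endpoints of $e_j$ (which lie in $\beta$) sit on the boundary of the same component of $L \setminus f_i$, so by the standard Jordan-curve parity observation $e_j$ crosses $f_i$ an even number of times. Summing over all pairs $(i,j)$ gives that the total number of crossings of $e$ and $f$, equal to $\sum_{i,j} |e_j \cap f_i|$, is even. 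The main obstacle I expect is setting up the lens picture cleanly and spotting the simple but crucial fact that $\beta$ lies on one side of every chord with both endpoints in $\alpha$; once that is in hand the rest is just Jordan-curve bookkeeping.
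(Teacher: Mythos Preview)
The paper does not give its own proof of this lemma; it is quoted from \cite{buzaglo} and used as a black box. Your argument is correct and is essentially the standard one: localize all crossings to the lens $L=D_1\cap D_2$, decompose $e\cap L$ and $f\cap L$ into chords of the disk $L$ with endpoints on $\beta$ and $\alpha$ respectively, and use that a chord with both endpoints in the interior of $\alpha$ leaves all of $\beta$ on one side, so each pair of chords crosses evenly.
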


\begin{lemma}[\protect\cite{pinchasi}]\label{lem:romdisjoint}
	Given a family $\F$ of pseudo-disks and a finite family $\B$ of pairwise disjoint connected sets in the plane, the Delaunay-graph of $\B$ wrt. $\F$ is a planar graph.
\end{lemma}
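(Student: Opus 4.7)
Plan. My plan is to exhibit a drawing of the Delaunay-graph in the plane in which every pair of non-adjacent edges meets an even number of times, and then to upgrade this to planarity by the (weak) Hanani--Tutte theorem. For every $B\in\B$ fix a representative point $x_B\in B$ (to be the image of the vertex $v_B$). For every edge $e=\{v_{B_1},v_{B_2}\}$ of the Delaunay-graph fix a witness $F_e\in\F$ intersecting exactly $B_1,B_2$ among members of $\B$, together with points $p_e^{B_1}\in F_e\cap B_1$ and $p_e^{B_2}\in F_e\cap B_2$. Using the connectedness of $B$, inside each $B$ draw a planar star-tree of pairwise internally disjoint Jordan arcs from $x_B$ to every $p_e^B$ with $v_B\in e$; inside each $F_e$ join $p_e^{B_1}$ and $p_e^{B_2}$ by any Jordan arc $\alpha_e\subset F_e$. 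The edge $e$ is then drawn as the concatenation $\gamma_e$ of the $B_1$-tree-arc, $\alpha_e$ and the $B_2$-tree-arc; after a small generic perturbation the $\gamma_e$'s form a valid drawing of the Delaunay-graph.

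The next step is to analyse the possible crossings. Take two non-adjacent edges $e=\{v_{B_1},v_{B_2}\}$ and $e'=\{v_{B_3},v_{B_4}\}$, so the four sets $B_1,B_2,B_3,B_4$ are distinct. The tree-portions of $\gamma_e$ lie in $B_1\cup B_2$ and those of $\gamma_{e'}$ in $B_3\cup B_4$, which are disjoint because $\B$ is pairwise disjoint. The middle arc $\alpha_e\subset F_e$ is disjoint from $B_3\cup B_4$ because $F_e$ does not meet $B_3$ or $B_4$, and symmetrically $\alpha_{e'}$ is disjoint from $B_1\cup B_2$. Hence every intersection of $\gamma_e$ with $\gamma_{e'}$ is an intersection of $\alpha_e$ with $\alpha_{e'}$. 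Moreover, for the same disjointness reasons, $p_e^{B_1},p_e^{B_2}\in F_e\setminus F_{e'}$ and $p_{e'}^{B_3},p_{e'}^{B_4}\in F_{e'}\setminus F_e$, so Lemma~\ref{lem:disjointedges} applied to the pseudo-disks $F_e,F_{e'}$ gives that $\alpha_e$ and $\alpha_{e'}$ cross an even number of times. Thus every pair of independent edges in our drawing crosses evenly, and the weak Hanani--Tutte theorem (any drawing of a graph in the plane with this parity property can be replaced by a planar embedding) concludes that the Delaunay-graph is planar.

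The main obstacle is precisely that Lemma~\ref{lem:disjointedges} delivers ``even'' rather than ``zero'' crossings, so a local redrawing of a single pair of witness pseudo-disks need not yield a globally planar picture; bridging this gap across all edge pairs at once is what Hanani--Tutte packages in a single invocation. A self-contained substitute would be to iteratively uncross pairs $\alpha_e,\alpha_{e'}$ by surgery between two consecutive intersection points and argue that the total number of crossings strictly decreases, but the bookkeeping is fussy. A minor side issue is drawing the star-tree inside a possibly ill-behaved connected $B$, which can always be circumvented by first replacing $B$ by a slightly larger path-connected (even simply connected) neighbourhood chosen small enough that the disjointness of $\B$ and its intersection pattern with $\F$ are unaffected.
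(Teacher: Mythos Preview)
Your proposal is correct and follows essentially the same approach that the paper uses. The paper does not prove Lemma~\ref{lem:romdisjoint} itself (it is quoted from \cite{pinchasi}), but your argument is precisely the Hanani--Tutte strategy that the paper employs for the closely related Lemma~\ref{lem:psdiskownpointdelplanar}: place a representative point in each region, draw each Delaunay edge as a concatenation of three arcs (two inside the endpoint regions, one inside the witnessing pseudo-disk), use Lemma~\ref{lem:disjointedges} to show that the witness-arcs of two independent edges cross evenly, and finish with Hanani--Tutte. Because the sets in $\B$ are pairwise disjoint here, your case analysis is in fact simpler than the paper's for Lemma~\ref{lem:psdiskownpointdelplanar}: the tree-arc portions of two independent edges are automatically disjoint, so only the $\alpha_e$--$\alpha_{e'}$ crossings need to be controlled, whereas in the paper's setting the $a_B$--$a_{B'}$ portions can also intersect and require a separate invocation of Lemma~\ref{lem:disjointedges}. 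Your ``star-tree'' refinement is harmless but unnecessary, since weak Hanani--Tutte imposes no parity condition on adjacent edges.
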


Note that for a family $\B$ of pairwise disjoint connected sets, the Delaunay-graph and the restricted Delaunay-graph are the same. Assuming also that $\B$ is a family of Jordan regions, Lemma \ref{lem:romdisjoint} becomes a special case of Theorem \ref{thm:psdiskdelplanar}. Before proving Theorem \ref{thm:psdiskdelplanar} we prove another special case which for Jordan regions strengthens Lemma \ref{lem:romdisjoint}, while its proof remains relatively simple:

\begin{lemma}\label{lem:psdiskownpointdelplanar}
	Given a family $\F$ of pseudo-disks and a finite family $\B$ of pseudo-disks such that each member $B\in B$ contains a point which is in no other $C\in \B$, the Delaunay-graph of $\B$ wrt. $\F$ is a planar graph.
\end{lemma}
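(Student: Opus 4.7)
The plan is to construct an explicit drawing of the Delaunay-graph and to prove that it is planar by showing that any two non-adjacent edges cross an even number of times; planarity then follows either from the Hanani--Tutte theorem, or from the classical ``bigon swap'' uncrossing argument (which reduces the crossings of a given pair by $2$ while preserving the parity of crossings of every other pair, since any third edge crosses the bigon's boundary an even number of times). First, for each $B\in\B$ I would fix a private point $p_B\in B$ that lies in no other $C\in\B$ and place the vertex $v_B$ at $p_B$. For each Delaunay edge $e=\{v_{B_1},v_{B_2}\}$ with witness $F_e\in\F$, I would pick generically chosen points $a_e\in B_1\cap F_e$ and $b_e\in B_2\cap F_e$, and draw $e$ as the concatenation $\pi_e=\alpha_e\cup\beta_e\cup\gamma_e$ of three simple arcs, with $\alpha_e\subset B_1$ from $p_{B_1}$ to $a_e$, $\beta_e\subset F_e$ from $a_e$ to $b_e$, and $\gamma_e\subset B_2$ from $b_e$ to $p_{B_2}$.

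The main step is then the parity analysis for two non-adjacent Delaunay edges $e_1=\{v_{B_1},v_{B_2}\}$ and $e_2=\{v_{C_1},v_{C_2}\}$, where by non-adjacency $\{B_1,B_2\}\cap\{C_1,C_2\}=\emptyset$. The crossings of $\pi_{e_1}$ and $\pi_{e_2}$ split into nine sub-arc pairs, and these fall into two types. In a \emph{mixed} pair one sub-arc lies in some $B\in\{B_1,B_2\}$ (an $\alpha$- or $\gamma$-sub-arc) and the other lies in $F_{e_2}$ (the $\beta$-sub-arc of $e_2$), or symmetrically; since $B\notin\{C_1,C_2\}$, the Delaunay-witness property gives $F_{e_2}\cap B=\emptyset$, so the two sub-arcs are automatically disjoint and contribute $0$ crossings. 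Crucially, these cases never invoke Lemma~\ref{lem:disjointedges} across the two families $\B$ and $\F$, so the fact that $\B\cup\F$ need not be a pseudo-disk family is irrelevant. In a \emph{same-family} pair, the two sub-arcs lie either in two distinct members of $\B$ (two $\alpha$- or $\gamma$-sub-arcs) or in two distinct members of $\F$ (two $\beta$-sub-arcs); the two containing regions therefore form a pseudo-disk pair. Using the privacy of the $p_B$'s together with the witness property one checks that the endpoints of each sub-arc lie in the symmetric-difference sets required by Lemma~\ref{lem:disjointedges} — for example, for the pair $\alpha_{e_1}\subset B_1$ and $\alpha_{e_2}\subset C_1$: $p_{B_1}\in B_1\setminus C_1$ by privacy, and $a_{e_1}\in F_{e_1}\cap B_1\subset B_1\setminus C_1$ because $F_{e_1}\cap C_1=\emptyset$ — after which the lemma yields an even number of crossings. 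Summing over all nine pairs shows $\pi_{e_1}$ and $\pi_{e_2}$ cross an even number of times.

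I expect the main obstacle to be the uniform verification of the endpoint-containment conditions in all five same-family sub-cases — this is precisely where the private-point hypothesis is essential, since otherwise a point like $p_{B_1}$ could lie inside some $C_i$ and the hypothesis of Lemma~\ref{lem:disjointedges} would fail. Once the even-crossings property is established for every non-adjacent pair, planarity is immediate.
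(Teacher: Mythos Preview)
Your proposal is correct and follows essentially the same approach as the paper: place each vertex at its private point, draw each Delaunay edge as a three-arc path (inside $B_1$, inside the witness $F$, inside $B_2$), and verify even parity of crossings for non-adjacent edges via the same nine-subpair split, using disjointness for the mixed pairs and Lemma~\ref{lem:disjointedges} for the five same-family pairs, then conclude by Hanani--Tutte. The endpoint verifications you describe (privacy for the $p_B$'s, the witness condition $F_{e}\cap D=\emptyset$ for $D\notin\{B_1,B_2\}$ for the junction points $a_e,b_e$) are exactly the ones the paper carries out.
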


\begin{proof}
	
	We will draw $G$ in the plane in such a way that every pair of edges in $G$
	that do not share a common vertex cross an even number of times. The Hanani-Tutte Theorem \cite{hanani1,hanani2} then implies the planarity of $G$. In our case the edges may have self-crossings too but the Hanani-Tutte theorem holds in this case as well as we can easily redraw the edges to avoid self-crossings (for further details see, e.g., \cite{hananirado}).
	
	For every $v_B, B\in \B$ choose a point $p_B\in B$ which is in no other $C\in \B$. For an illustration of the rest of the proof see Figure \ref{fig:psdiskownpoint}.

	\begin{figure}
		\centering
		\includegraphics[height=5.9cm]{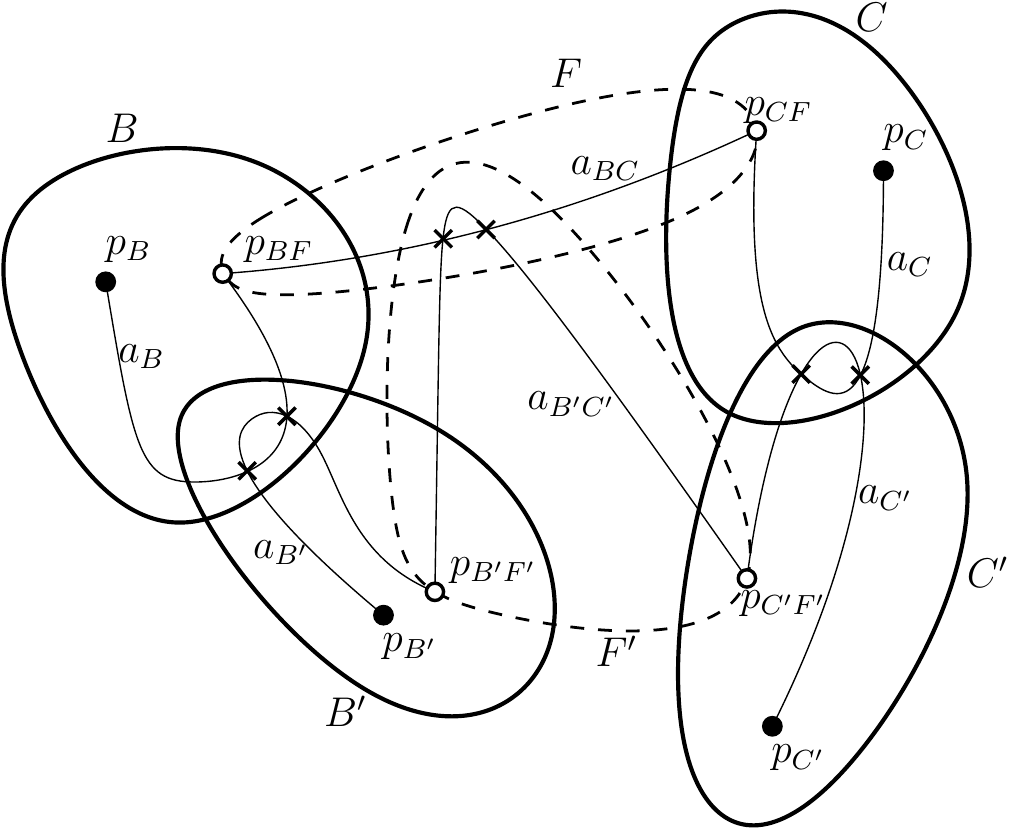}
		\caption{The edges $e_f$ and $e_{f'}$ intersect an even number of times.}
		\label{fig:psdiskownpoint}	
	\end{figure}

	If $v_B$ and $v_C$ are
	connected by an edge $f$ in $G$, then $e_f$, the drawing of the edge $f$, connecting $p_B$ and $p_C$, is as follows.
	Let $F\in \F$ be a pseudo-disk corresponding to $f$. Draw an arc $a_B$ inside $B$ from $p_B$ to a point $p_{BF}\in B\cap F$ ($p_{BF}\in C$ is allowed, also $p_{BF}$ may coincide with $p_B$). Similarly draw an arc $a_C$ inside $C$ from $p_C$ to a point $p_{CF}\in C\cap F$ ($p_{CF}\in B$ is allowed and $p_{CF}$ may coincide with $p_C$). Finally, draw an arc $a_{BC}$ inside $F$ from $p_{BF}$ to $p_{CF}$ ($p_{BF}$ and $p_{CF}$ may also coincide in which case $a_{BC}$ is of length zero). The concatenation of these three arcs is the drawing $e_f$ of the edge $f$ between the points $p_B$ and $p_C$. Note that $e_f$ may have self-crossings.
	
	We are left to prove that in this drawing of $G$ every pair of edges that do not share a vertex cross an even number of times.
	
	Let $B,C,B',C'\in \B$ with edges $f$ defined by $F$ between $v_B$ and $v_{C}$ and $f'$ defined by $F'$ between $v_{B'}$ and $v_{C'}$ 
	Suppose $e_f = a_B\cup a_{BC}\cup a_{C}$ and $e_{f'} = a_{B'}\cup a_{B'C'}\cup a_{C'}$ are the drawings of the two edges.
	Notice that the two endpoints of $a_B$ are in $B\setminus {B'}$ and the two endpoints of $a_{B'}$ are in $B'\setminus B$. Thus using Lemma \ref{lem:disjointedges} we get that $a_B$ with $a_{B'}$ intersects an even number of times. The same way we get that $a_B$ with $a_{C'}$, $a_{C}$ with $a_{B'}$ and $a_{C}$ with $a_{C'}$ intersect an even number of times. As $a_{BC}$ (resp.  $a_{B'C'}$) is disjoint from $B'$ and $C'$ (resp. $B$ and $C$), there is no intersection between $a_{BC}$ and $a_{B'},a_{C'}$ nor between $a_{B'C'}$ and $a_B,a_C$. Finally, the two endpoints of $a_{BC}$ are in $F\setminus F'$ and the two endpoints of $a_{B'C'}$ are in $F'\setminus F$ thus again using Lemma \ref{lem:disjointedges} we get that $a_{BC}$ and $a_{B'C'}$ intersect an even number of times. These together imply that indeed $e_f$ and $e_{f'}$ intersect an even number of times, as required.
\end{proof}

We note that Pach and Sharir \cite{pachsharir2} proved that (among others) pseudo-disks have linear union complexity using a similar approach as the proof of
Lemma \ref{lem:psdiskownpointdelplanar}, connecting own-points of intersecting regions along their boundaries.

\begin{corollary}\label{cor:romstrong}
	Given a family $\F$ of pseudo-disks and a finite family $\B$ of pseudo-disks, the restricted Delaunay-graph of $\B$ wrt. $\F$ is a planar graph.
\end{corollary}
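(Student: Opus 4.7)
My plan is to deduce Corollary \ref{cor:romstrong} from Lemma \ref{lem:psdiskownpointdelplanar} by a simple pruning argument: I would restrict attention to the pseudo-disks of $\B$ that actually take part in some restricted Delaunay-edge, and then observe that the definition of the restricted Delaunay-graph automatically supplies the own-points needed to invoke Lemma \ref{lem:psdiskownpointdelplanar}.

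Concretely, let $\B_0 = \{B \in \B : v_B \text{ has positive degree in the restricted Delaunay-graph}\}$. Vertices of $\B \setminus \B_0$ are isolated in the restricted Delaunay-graph, so they are irrelevant to planarity and can be discarded. I claim that every $B \in \B_0$ contains a point which is in no other member of $\B$ (a fortiori in no other member of $\B_0$). Indeed, fix any restricted Delaunay-edge $\{v_B, v_{B'}\}$ incident to $v_B$, witnessed by $F \in \F$. By definition of the restricted Delaunay-graph we have $F \cap C = \emptyset$ for every $C \in \B \setminus \{B, B'\}$ and also $F \cap B \cap B' = \emptyset$. Hence any point $q \in F \cap B$ (which is nonempty by assumption) lies in $B$, lies outside $B'$ (because $F \cap B \cap B' = \emptyset$), and lies outside every other $C \in \B \setminus \{B, B'\}$ (because $F \cap C = \emptyset$). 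So $q$ is an own-point of $B$ with respect to $\B$.

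Having produced an own-point for every $B \in \B_0$, I would apply Lemma \ref{lem:psdiskownpointdelplanar} to the pseudo-disk family $\B_0$ with respect to $\F$, obtaining planarity of the full Delaunay-graph of $\B_0$ wrt.\ $\F$. To finish, it remains to note that every edge $\{v_B, v_{B'}\}$ of the restricted Delaunay-graph of $\B$ wrt.\ $\F$ is also an edge of the Delaunay-graph of $\B_0$ wrt.\ $\F$: its witness $F$ is disjoint from every member of $\B \setminus \{B, B'\}$, and in particular from every member of $\B_0 \setminus \{B, B'\}$, so $F$ still defines a size-$2$ hyperedge of $\I(\B_0, \F)$. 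Thus the restricted Delaunay-graph of $\B$ wrt.\ $\F$ is a subgraph of the planar Delaunay-graph of $\B_0$ wrt.\ $\F$, with at most some additional isolated vertices, and is therefore planar.

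There is no serious obstacle: the entire content of the argument is already in Lemma \ref{lem:psdiskownpointdelplanar}. The one thing worth checking carefully is that the own-point condition only needs to hold for vertices of $\B_0$ and that the restricted Delaunay-edges of $\B$ really do survive as Delaunay-edges of the smaller family $\B_0$, which is immediate because shrinking the ground family can only weaken the "no other $B$ is hit by $F$" condition.
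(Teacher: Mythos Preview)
Your proof is correct and follows essentially the same approach as the paper: discard the degree-$0$ vertices, use the witness $F$ of an incident restricted Delaunay-edge to exhibit an own-point for each remaining $B$, and then invoke Lemma~\ref{lem:psdiskownpointdelplanar}. The only cosmetic difference is that the paper phrases the pruning as an iterative deletion of degree-$0$ vertices, whereas you pass directly to $\B_0$ in one step and explicitly verify that the restricted Delaunay-edges of $\B$ survive as Delaunay-edges of $\B_0$.
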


\begin{proof}
    We can delete a member of $\B$ from $\B$ which corresponds to a degree-$0$ vertex in the restricted Delaunay-graph and then the restricted Delaunay-graph of the new family contains the original restricted Delaunay-graph as a subgraph apart from the degree-$0$ vertex (which does not alter planarity). We can keep doing this until possible, thus we can assume that there are no degree-$0$ vertices. In this case for every $B\in \B$ we have a point $p_B\in B$ which is in no other $C\in \B$. Indeed, taking an edge $f$ of $G$ incident to $v_B$ and the corresponding $F\in \F$, by definition of the restricted Delaunay-graph every point of $B\cap F$ is a point which is in $B$ but in no other $C\in \B$. Thus we can apply Lemma \ref{lem:psdiskownpointdelplanar} to conclude that the Delaunay-graph (and so the restricted-Delaunay-graph as well) is planar.
\end{proof}

\begin{obs}\label{obs:restricteddel}
	If $\F$ is shrinking-closed (for $\B$), then for every $F\in \F$ the corresponding hyperedge $H=\{v_B:B\cap F\ne \emptyset\}$ of $\I(\B,\F)$ either contains an edge of the restricted Delaunay-graph or $F$ contains a point contained in at least $2$ members of $\B$.
\end{obs}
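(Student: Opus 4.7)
The plan is to prove the contrapositive: assume $F\in\F$ contains no point lying in two or more members of $\B$, which is equivalent to saying that the regions $F\cap B$ (over all $B\in\B$ with $B\cap F\ne\emptyset$) are pairwise disjoint subsets of $F$. I will exhibit an edge of the restricted Delaunay-graph inside the hyperedge $H=H_F$. If $|H|=2$, say $H=\{v_{B_1},v_{B_2}\}$, then $F$ itself is a restricted Delaunay witness for $\{v_{B_1},v_{B_2}\}$: both $F\cap B_i$ are nonempty by definition of $H$, $F\cap B=\emptyset$ for every other $B\in\B$ since $|H|=2$, and $F\cap B_1\cap B_2=\emptyset$ by the standing assumption. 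Hence $H$ itself is the required restricted Delaunay edge.

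For the main case $|H|\ge 3$, I apply Corollary \ref{cor:shrink2} to the pairwise disjoint family $\B'=\{F\cap B : B\cap F\ne\emptyset\}$, with $D=F$ and $\ell=2$, to produce a pseudo-disk $D'\subset F$ intersecting exactly two members of $\B'$---say $F\cap B_1$ and $F\cap B_2$---for which $\F\cup\{D'\}$ is still a pseudo-disk family. Because $D'\subset F$, the remaining restricted-Delaunay conditions come for free: $D'\cap B=\emptyset$ for every $B\in\B\setminus\{B_1,B_2\}$, and $D'\cap B_1\cap B_2\subset F\cap B_1\cap B_2=\emptyset$. Therefore $D'$ is itself a restricted Delaunay witness for the pair $\{v_{B_1},v_{B_2}\}\subset H$.

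The shrinking-closedness of $\F$ then closes the argument: since $D'\subset F\in\F$ and $\F\cup\{D'\}$ is a pseudo-disk family, adding $D'$ cannot strictly increase the number of hyperedges of $\I(\B,\F)$, so the hyperedge $\{v_{B_1},v_{B_2}\}$ produced by $D'$ is already realized in $\I(\B,\F)$, and the restricted disjointness certified by $D'$ places this pair in the restricted Delaunay-graph contained inside $H$. The main technical subtlety I expect to navigate is in this final step: shrinking-closedness only guarantees that the hyperedge $\{v_{B_1},v_{B_2}\}$ is realized in $\F$, whereas restricted Delaunay-graph membership requires a witness in $\F$ satisfying the additional condition $F^*\cap B_1\cap B_2=\emptyset$, and this is where the proof must carefully connect the restricted witness $D'\subset F$ to a witness drawn from $\F$ itself, using the freedom in choosing $D'$ via Corollary \ref{cor:shrink2} together with the structure of the shrinking-closed enlargement of Corollary \ref{cor:shrinking-closed}.
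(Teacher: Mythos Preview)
Your approach---reduce to the case where $F$ meets the members of $\B$ in pairwise disjoint pieces, apply Corollary~\ref{cor:shrink2} to get a $D'\subset F$ meeting exactly two of them, and then invoke shrinking-closedness---is exactly the intended line (the paper states the observation without proof and uses the same reasoning verbatim in the proof of Theorem~\ref{thm:linunionwrtpsdisk}). You are also right to flag the final step as the delicate one. Unfortunately, that step is not merely delicate: it cannot be completed, because the observation is false as literally stated.

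Here is a counterexample. Let $\B=\{B_1,B_2,B_3\}$ be three disks that pairwise intersect but have no common point. Let $F$ be a simply connected region that meets the $1$-deep part of each $B_i$ but avoids every $2$-deep point (e.g.\ a thin ``noodle'' winding around the outside of the three lenses $B_i\cap B_j$). For each pair $\{i,j\}$ let $F_{ij}$ be a tiny disk inside $B_i\cap B_j$. Set $\F=\{F,F_{12},F_{13},F_{23}\}$; since $F$ is disjoint from each $F_{ij}$ and the $F_{ij}$ are pairwise disjoint, $\F$ is a pseudo-disk family. Now $\I(\B,\F)$ already contains every hyperedge of size $\ge 2$ on three vertices, so $\F$ is shrinking-closed for $\B$. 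Yet the restricted Delaunay-graph is empty: the only size-$2$ witnesses in $\F$ are the $F_{ij}$, and each satisfies $F_{ij}\cap B_i\cap B_j\neq\emptyset$. Thus $H_F=\{v_{B_1},v_{B_2},v_{B_3}\}$ contains no restricted Delaunay edge, while $F$ contains no $2$-deep point.

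What this shows is that shrinking-closedness guarantees the \emph{hyperedge} $\{v_{B_1},v_{B_2}\}$ is present, but not that any witness in $\F$ satisfies the extra disjointness condition; the ``freedom in choosing $D'$'' cannot help, since $D'$ itself need not lie in $\F$. The fix---both for the observation and for the paper's later use of it---is simply to enlarge $\F$ by throwing in the $D'$ produced by Corollary~\ref{cor:shrink2}. This does not change $\I(\B,\F)$ (so it is harmless for all downstream arguments) but does place $D'\in\F$, and then $D'\subset F$ is the required restricted-Delaunay witness. Equivalently, one may replace ``restricted Delaunay-graph'' by ``Delaunay-graph'' in the statement, in which case your argument goes through cleanly.
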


Corollary \ref{cor:romstrong} shows that Lemma \ref{lem:psdiskownpointdelplanar} takes care of the planarity of the restricted Delaunay-graph. From Observation \ref{obs:restricteddel} we may think that we are left to take care of hyperedges that contain a point that is contained in at least $2$ members of $\B$. This intuition turns out to be good, in all of our main results Lemma \ref{lem:psdiskownpointdelplanar} will essentially reduce the problem to regarding the intersection hypergraph of $\B$ wrt. all points of the plane (instead of $\B$ wrt. $\F$).

Before starting the proof of Theorem \ref{thm:psdiskdelplanar} we make some more preparations:

\begin{defi}
	Given a family of regions $\B$, a point is \textbf{$k$-deep} if it is contained in exactly $k$ members of $\B$. We denote by $\partial B$ the boundary of some region $B$ of $\B$. We call a point which is in $B$ but no other $C\in \B$, an \textbf{own-point} of $B$.
\end{defi}

\begin{defi}
	A hypergraph $\HH'$ \textbf{supports} another hypergraph $\HH$ if they are on the same vertex set and for every hyperedge $H\in \HH$ there exists a hyperedge $H'\in \HH'$ such that $H'\subseteq H$.
\end{defi}

\begin{obs}\label{obs:supportinherit}
	If $\HH''$ supports $\HH'$ and $\HH'$ supports $\HH$, then $\HH''$ supports $\HH$.
\end{obs}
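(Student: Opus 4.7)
The plan is to unfold the definition of ``supports'' and chase subsets, relying only on the transitivity of set inclusion. The statement is essentially saying that the relation ``supports'' is transitive, and the proof is a one-line chase.

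First I would observe that both hypotheses already give us that all three hypergraphs share the same vertex set, so the vertex-set condition in the definition of ``supports'' is automatic for the pair $(\HH'',\HH)$. Then, to verify the covering condition, I would take an arbitrary hyperedge $H \in \HH$. Since $\HH'$ supports $\HH$, there exists $H' \in \HH'$ with $H' \subseteq H$. Since $\HH''$ supports $\HH'$, applied to this $H'$, there exists $H'' \in \HH''$ with $H'' \subseteq H'$. Combining the two inclusions gives $H'' \subseteq H$, as required.

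There is no real obstacle here: the observation is a direct consequence of the transitivity of the subset relation, and the only thing one has to be careful about is ensuring that the same vertex set is shared by all three hypergraphs, which is immediate from applying the definition twice. I expect the proof in the paper to be at most one or two sentences long.
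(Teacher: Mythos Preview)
Your proof is correct; the paper in fact states this observation without any proof at all, treating it as self-evident, and your argument is exactly the natural one-line transitivity chase one would expect.
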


\begin{obs}\label{obs:supportdel}
	If a hypergraph $\HH'$ supports another hypergraph $\HH$, then the Delaunay-graph of $\HH$ is a subgraph of the Delaunay-graph of $\HH'$.
\end{obs}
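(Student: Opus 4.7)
The plan is simply to unfold the definitions. Take any edge $e = \{u,v\}$ of the Delaunay-graph of $\HH$. By the definition of the Delaunay-graph applied to the abstract hypergraph $\HH$, this means $e$ is a hyperedge of $\HH$ of size exactly $2$. Since $\HH'$ supports $\HH$, there is some hyperedge $H' \in \HH'$ with $H' \subseteq e$. The paper's standing convention, stated immediately after the definition of a proper coloring, is that every hypergraph contains only hyperedges of size at least $2$. Therefore $H'$ cannot be a singleton, and the containment $H' \subseteq \{u,v\}$ forces $H' = \{u,v\}$. Hence $\{u,v\}$ is itself a hyperedge of $\HH'$ of size $2$, and thus an edge of the Delaunay-graph of $\HH'$. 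Since both hypergraphs live on the same vertex set, this containment at the level of edge sets is exactly the asserted subgraph relation.

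There is essentially no obstacle here; the whole statement collapses to a single step once the definitions are in place. The only subtle ingredient is the size-at-least-$2$ convention on hyperedges, which is precisely what rules out the degenerate possibility of a supporting hyperedge $H'$ being a single vertex contained in $\{u,v\}$. Without this convention one would either have to weaken the conclusion or strengthen the definition of support to explicitly demand $|H'| \ge 2$, so it is worth flagging this convention in the write-up even though the argument itself is immediate.
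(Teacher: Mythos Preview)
Your proof is correct and is exactly the intended argument; the paper states this observation without proof, treating it as immediate from the definitions together with the convention that hyperedges have size at least $2$. Your explicit mention of that convention is appropriate, since it is indeed the only point where anything could conceivably go wrong.
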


Our last tool is the following theorem of Snoeyink and Hershberger (we write here a special case of what they called as the Sweeping theorem):

\begin{thm}[\protect\cite{snoeyink}]\label{thm:snoeyink}
	Let $\Gamma$ be a finite set of bi-infinite curves in the plane such that any pair of them intersects at most twice. Let $d$ be a closed curve which intersects at most twice every curve in $\Gamma$. We can sweep $d$ such that every member of the sweeping of $d$ intersects at most twice every other curve of $\Gamma$.
\end{thm}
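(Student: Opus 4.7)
The plan is to construct a continuous homotopy $\{d_t\}_{t \in [0,1]}$ with $d_0 = d$ and $d_1$ equal to a point in the interior of $d$, such that for every $t$ the closed curve $d_t$ intersects each $\gamma \in \Gamma$ transversely in at most two points. After a small perturbation I would put the configuration in general position so that the combinatorial type of the arrangement $\Gamma \cup \{d_t\}$ changes only at isolated bitangency events (where $d_t$ becomes tangent to a single $\gamma$, flipping $|d_t \cap \gamma|$ by $2$) and triple-point events (where $d_t$, some $\gamma$, and some $\gamma'$ are concurrent, merely reordering intersections along $d_t$). The task then reduces to arranging that no bitangency event occurs with a $\gamma$ for which $|d_t \cap \gamma|$ is already $2$, and that no triple-point event temporarily raises the count above $2$.

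I would proceed by induction on the number $N$ of arrangement-vertices of $\Gamma$ lying in the open disk bounded by $d$, together with the number of curves $\gamma \in \Gamma$ meeting $d$. In the base case $N = 0$ and no curve enters the disk, so $d$ can be radially contracted. For the inductive step I would find either (a) an arrangement vertex $q = \gamma \cap \gamma'$ inside the disk that is outermost in a suitable sense, or (b) when no such vertex exists, a curve $\gamma$ whose trace inside $d$ is an arc not crossed by any other curve of $\Gamma$. In case (a), a local deformation of $d$ near $q$ expels $q$ from the disk by pushing a small finger of $d$ past $q$ without creating new intersections; in case (b), the empty arc of $\gamma$ can be swept past, a single bitangency removing its two intersections with $d$ without affecting any other curve of $\Gamma$.

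The main obstacle is to verify that an outermost vertex or a clean arc of type (a) or (b) can always be located, and that the accompanying local push is safe --- i.e., never forces any $\gamma'' \in \Gamma$ to temporarily acquire a third intersection with $d_t$. I would handle this by a planar combinatorial analysis of the arrangement restricted to the disk bounded by $d$, in the spirit of Pinchasi's shrinking lemma (Lemma \ref{lem:shrink}): an Euler-type face count, combined with the pseudo-line condition $|\gamma \cap \gamma'| \le 2$ on pairs in $\Gamma$, should force the existence of a face adjacent to $d$ whose boundary consists of an arc of $d$ and an arc of a single curve of $\Gamma$. The push across such a face is safe because any other $\gamma''$ entering the face must exit through the same $\gamma$-arc and so interacts with $d_t$ only through a pair of transient intersections that cancel. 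Iterating the move finitely many times shrinks $d$ through the desired homotopy, completing the sweep.
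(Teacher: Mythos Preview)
The paper does not prove this statement at all: Theorem~\ref{thm:snoeyink} is quoted as a special case of the Sweeping Theorem of Snoeyink and Hershberger \cite{snoeyink} and is used as a black box in Step~3b of the proof of Theorem~\ref{thm:psdiskdelplanar}. There is therefore nothing in the paper to compare your argument against.

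As for your sketch on its own merits, the overall strategy---shrinking $d$ through a sequence of elementary moves (bitangencies and triple-point passages) while maintaining the at-most-two-intersection property---is indeed the spirit of the Snoeyink--Hershberger result, but several steps are not justified. First, the sweep in the theorem covers the \emph{entire} plane (the family $d(t)$, $t\in(-1,1]$, has $d(-1)$ a point and $\bigcup_t d(t)$ equal to the plane), so you must also handle the outward expansion, not only the inward contraction. Second, your case~(a) move---``push a small finger of $d$ past an outermost vertex $q$ without creating new intersections''---is exactly the delicate part: to reach $q$ the deformed curve must traverse a face of the arrangement inside $d$, and if that face is bounded by an arc of some third curve $\gamma''$ whose endpoints lie on curves of $\Gamma$ rather than on $d$, the push adds two new intersections with $\gamma''$. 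Your claim that ``any other $\gamma''$ entering the face must exit through the same $\gamma$-arc'' is unproved and not true in general for families of curves with pairwise at most two crossings. Third, the promised Euler-type argument that always produces a face bounded by an arc of $d$ and an arc of a single curve of $\Gamma$ is not carried out; this is the combinatorial core of any such proof and is where the actual work in \cite{snoeyink} lies. In short, you have identified the right kind of local moves, but the existence of a \emph{safe} move at every stage is precisely the content of the theorem and is not established by your sketch.
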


A sweeping of $d$ in Theorem \ref{thm:snoeyink} is defined as a family $d(t)$, $t\in (-1,1]$, of pairwise disjoint curves such that
$d(0) = d$ and their union contains all points of the plane ($d(-1)$ is a degenerate curve consisting of a singular point).

\begin{proof}[Proof of Theorem \ref{thm:psdiskdelplanar}]
	
	Using Corollary \ref{cor:shrinking-closed} we can assume that $\F$ is shrinking-closed (for $\B$) and in particular, for every point $p$ which is in some $F\in \F$, $H_p=\{v_B:p\in B\in \B\}$ is a hyperedge of $\I(\B,\F)$ if it is of size at least two.
	
	We can keep deleting members of $\B$ from $\B$ which correspond to a vertex with degree $0$ or $1$ in the Delaunay-graph, during this process the Delaunay-graph of the new family keeps containing the graph induced by the original Delaunay-graph on the new (reduced) vertex set. If we can prove that the new Delaunay-graph is planar, then adding back the degree $0$ and $1$ vertices in reverse order we see that the original Delaunay-graph is also planar. Thus we can assume that every vertex of the Delaunay-graph has degree at least two. In this case for every $B\in \B$ we have a point in $B$ which is in at most one other $C\in \B$ (as there are no degree-$0$ vertices) and there are no two regions $B,C\in \B$ such that $B\subset C$ (as there are no degree-$0$ or degree-$1$ vertices).

	
	Given $\B$ and $\F$ we will modify $\B$ such that for the new family $\hat \B$ of pseudo-disks every $B\in \hat\B $ contains an own-point and furthermore we do this in a way that $\I(\hat \B,\F)$ supports $\I(\B,\F)$. Using Lemma \ref{lem:psdiskownpointdelplanar} we get that the Delaunay-graph of $\hat \B$ wrt. $\F$ is planar and then by Observation \ref{obs:supportdel} we get that the Delaunay graph of $\B$ wrt. $\F$ is also planar.
	
	We do this modification by repeating the below defined operation finitely many times, at each time decreasing by at least one the number of members of $\B$ without an own-point. We now define the three steps of this operation.
	\begin{itemize}
		\item{Step 1 - Preparation.}
		
		Take an arbitrary  $B\in \B$ that does not contain an own-point. We take a $C\in\B$ for which $v_B$ is connected to $v_C$ in the Delaunay-graph. Thus, there is a point $p$ which is in $B\cap C$ but no other member of $\B$. We morph the plane such that $B\cap C$ becomes a square $R$ such that the two intersection points of the boundary of $B$ and $C$ are on the horizontal halving line of the square (the upper part of $\partial R$ belongs to $\partial B$ and the lower part of $\partial R$ belongs to $\partial C$) and no member of $\B$ (and $\F$) intersects the vertical sides of the square. This morphing is easily doable and it leaves intact the intersection structure of $\B$ and $\F$. Denote by $p_l$ and $p_r$ the intersection points of $\partial B$ and $\partial C$, that is, the midpoints of the left and right side of $R$. This finishes the Preparation Step of our operation. See the left side of Figure \ref{fig:pullapart} for what we get after this step.

	\begin{figure}[t]
		\centering
		\subfigure[]{\includegraphics[height=8cm]{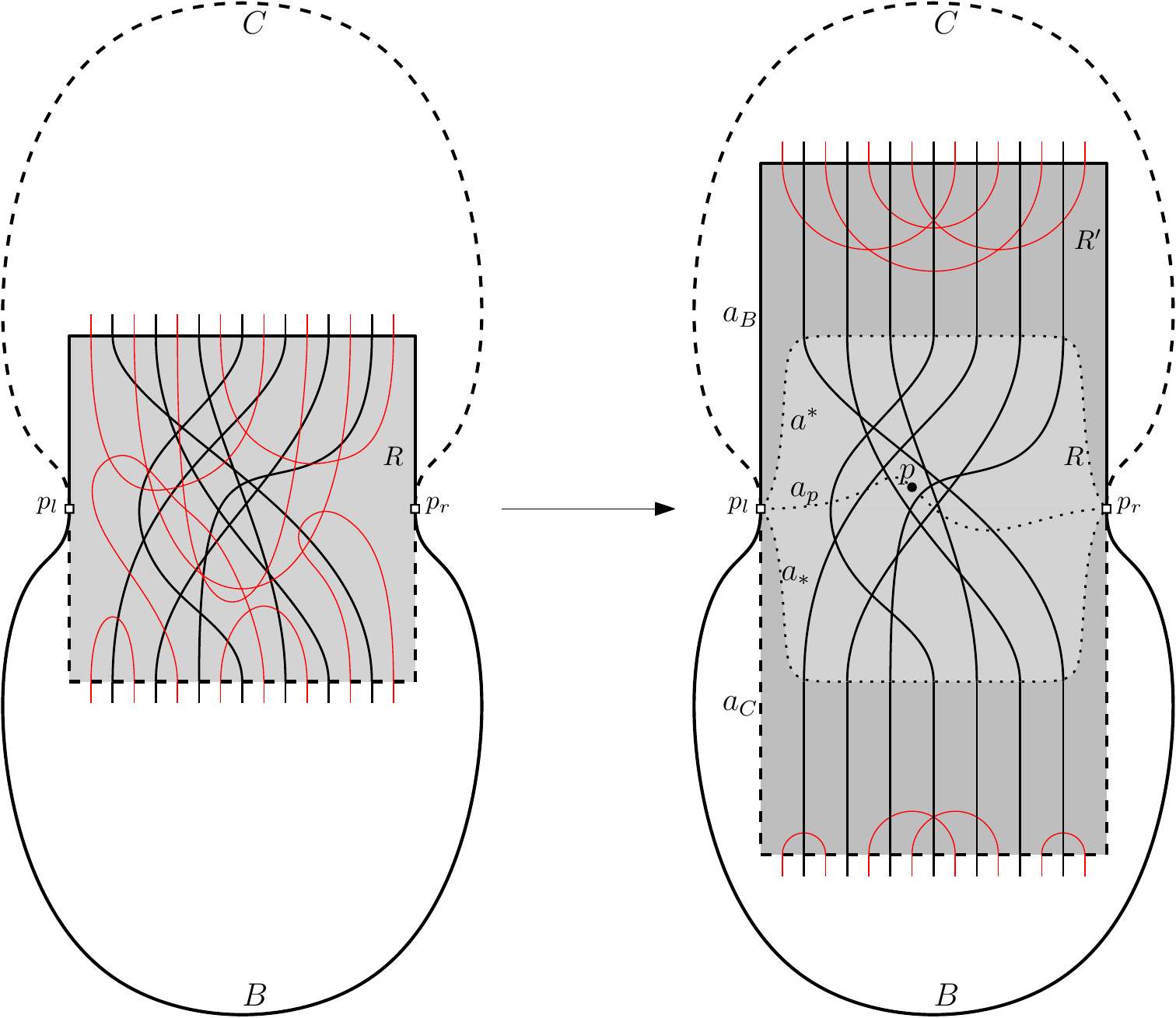}
			\label{fig:pullapart}
			\hspace{1cm}}
		\subfigure[]{\includegraphics[height=8cm]{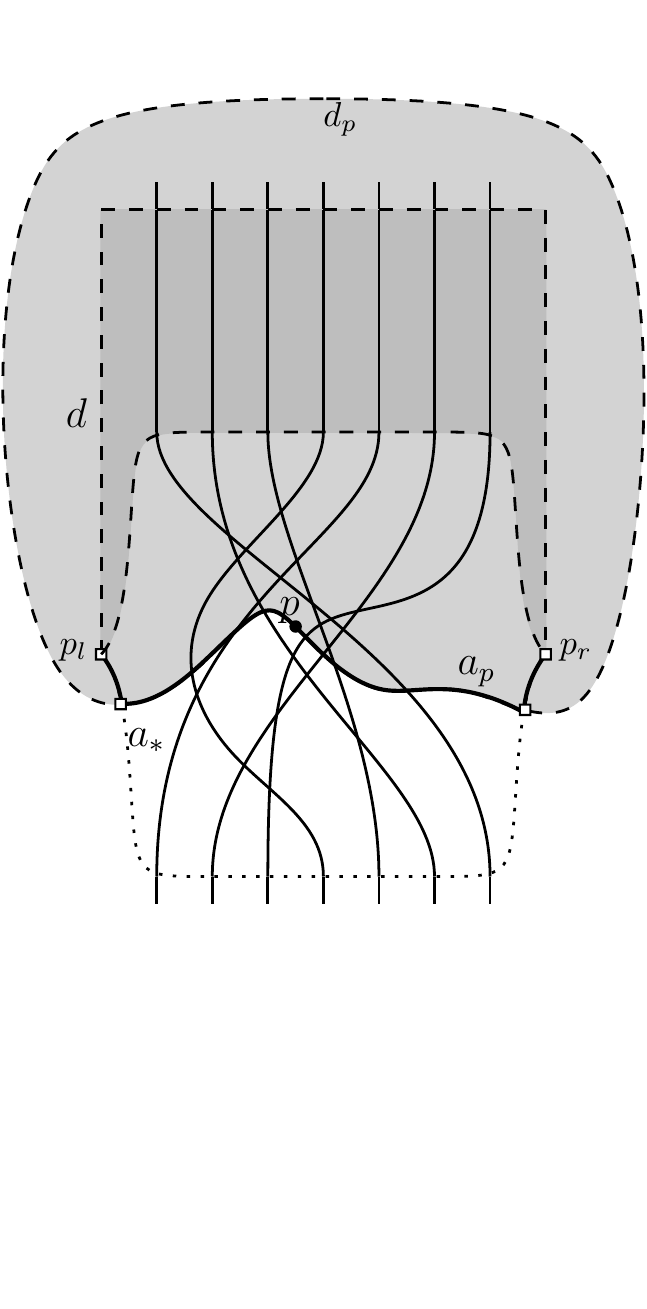}
			\label{fig:pullapart_ap}
		}
		\caption{Major steps of the operation used in the proof of Theorem \ref{thm:psdiskdelplanar}: (a) pulling apart $B\cap C$, (b) drawing $a_p$.}
	\end{figure}

		
		\item{Step 2 - Pulling apart $B\cap C$.}
		
		Now we define the Pulling apart $B\cap C$ Step of the operation which keeps the intersection structure of $\B$ and $\F$ outside $B\cap C$ intact. First we morph the plane such that the square's height is doubled to get the rectangle $R'$ while its horizontal halving line does not change (the part of $\partial R'$ above the halving line is part of $\partial B$ and the part below is part of $\partial C$) and the drawing inside $R$ remains untouched. We do this such that the intersections of members of $\B$ (and of $\F$) with the horizontal sides of the square are stretched to become vertical lines in $R'\setminus R$. Next, for every $D\in \B\setminus \{B\}$ which intersects exactly one (horizontal) side of $R'$, we redraw the part of $\partial D$ inside $B$ to be a half-circle inside $B$ that has the same endpoints. See the right side of Figure \ref{fig:pullapart}, where  these redrawn boundary parts are drawn with thin red strokes. We get a modified family $B'$, while the topology of $\F$ is unmodifed.
		
		It is easy to see that we modified $\B$ in a way that $\B'$ remains a pseudo-disk family. Next we show that $\I(\B',\F)$ supports $\I(\B,\F)$, that is for every $H\in \I(\B,\F)$ there exists a $H'\in \I(\B',\F)$ such that $H'\subseteq H$.
		If $F\in \F$ contains only depth-$1$ points, then it is disjoint from $B\cap C$ and so the hyperedge $H_F$ remains in the intersection hypergraph after pulling apart $B\cap C$. Otherwise, $F$ contains a depth-$2$ point $q$ and then $H_F$ contains the hyperedge $H_q$. So it is enough to prove that for every $H_q$ there is a hyperedge $H'$ (also corresponding to some point) in $\I(\B',\F)$ such that $H'\subseteq H_q$. For $q\notin B\cap C$, $H_q$ remains in the intersection hypergraph after pulling apart $B\cap C$. Finally, for $q\in B\cap C$ the hyperedge $H_q$ contains the hyperedge $\{v_B,v_C\}$ which is exactly the hyperedge corresponding to $p$ in $\B'$ (recall that this is the point that was only in $B$ and $C$ and no other member of $\B$, and this property remains true for $\B'$ after the operation).
		
		\item{Step 3 - Shrinking of $C$.}
		
		Now we do the final step, the Shrinking of $C$ Step of the operation.
		Let arc $a_B$ (resp. $a_C$) be the part of $\partial R'$ above (resp. below) the halving line, which arc is also part of $\partial B$ (resp. $\partial C$) after the first two steps of the operation.
		Let arc $a^*$ (resp. arc $a_*$) be the part of $\partial R$ which is above (resp. below) the halving line. We perturb the vertical parts of $a^*$ and $a_*$ slightly so that they do not overlap (nor intersect) with $a_B$ and $a_C$. See right side of Figure \ref{fig:pullapart} for an illustration.
		
		If either $a^*$ or $a_*$ contains a point $p$ which is $2$-deep (thus contained by $B$ and $C$ and no other member of $\B'$), then we redraw  $\partial C$ such that we change $a_C$ to $a^*$ or $a_*$ (whichever contains the $2$-deep point $p$) and then what we get remains a pseudo-disk family. This way the hyperedge $\{v_B,v_C\}$ is still in $\I(\B'',\F)$ as it is corresponding to a point close to $p$. Thus $\I(\B'',\F)$ supports $\I(\B',\F)$, and so $\I(\B,\F)$ as well.
		Furthermore, in $\B''$ there is a point close to $p$ which is an own-point of $B$.
		
		Unfortunately it can happen that none of $a^*$ and $a_*$ contains a $2$-deep point and so they are not suitable for redrawing $a_C$. Nevertheless, we assumed that there exists a $2$-deep point $p$ in $B$ before the operation, which remains $2$-deep after the first two steps of the operation. In the following we find an arc $a_p$ connecting $p_l$ and $p_r$ that goes inside $R$, goes through $p$ and intersects exactly once every maximal part inside $R$ of the boundary of a member of $\B'$, see Figure \ref{fig:pullapart_ap}. Assuming we have this arc $a_p$ we can redraw $\partial C$ such that we change $a_C$ to $a_p$ and what we get remains a pseudo-disk family. Also, the same way as in the previous case, we have that $\I(\B'',\F)$ supports $\I(\B,\F)$ and in $\B''$ there is a point close to $p$ which is an own-point of $B$.
		
		\item{Step 3b - Drawing $a_p$.}
		
		While the existence of such an $a_p$ is intuitively not surprising, proving its existence is a somewhat technical application of Theorem \ref{thm:snoeyink}.  
           Take the maximal parts inside $R$ of the boundaries of members of $\B'\setminus \{B,C\}$. Extending the top and bottom of these with vertical half-lines we get a family $\Gamma$ of bi-infinite curves that pairwise intersect at most twice. Let $d=a_B\cup a^*$, it intersects every curve of $\Gamma$ exactly twice. Now we can apply Theorem \ref{thm:snoeyink} to this family of curves $\Gamma$ to sweep $d$. In particular, we get a closed curve $d_p$ through $p$ which intersects every member of $\Gamma$ at most twice, see Figure \ref{fig:pullapart_ap}. Notice that it must intersect once every member of $\Gamma$ above the top side of $R$, thus it can intersect at most once (and easy to see that actually exactly once) every member of $\Gamma$ below the top side of $R$. Furthermore, $d_p$ must intersect $a_*$ at least twice. Take the maximal connected part $d'_p$ of $d_p\setminus a_*$ which contains $p$. 
        Take the curve $a_p$ which goes from $p_l$ along $a_*$ to the closest endpoint of $d'_p$ on $a_*$, continues with $d'_p$ and then goes again along $a_*$ to $p_r$. It is easy to see that $a_p$ also intersects every member of $\Gamma$ exactly once, as required.\footnote{The existence of such an $a_p$ can be proved in various ways. Instead of making the curves bi-infinite we can close them up so that they become boundaries of pseudo-disks all containing $p_l$. Then by a result of Agarwal et al. \cite{agarwal} these pseudo-disks are star-shaped (we omit the definition) which implies that there is a half-infinite curve (a `ray') starting at $p_l$ and going through $p$ intersecting all pseudo-disk boundaries once, which in turn implies the existence of the required $a_p$ as before.}
	\end{itemize}	
	
	Thus, with the above $3$-step operation we changed $\B$ such that one more member, $B$, contains an own-point. Also it is easy to see that no other member could have lost its own-point, as we have redrawn members of $\B$ only inside $B\cap C$, where there were no $1$-deep points. So after finitely many times repeating this operation we get a family $\hat \B$ in which every member has an own-point and whose intersection hypergraph $\I(\hat \B,\F)$ supports the  intersection hypergraph $\I(\B,\F)$ we started with, concluding the proof.
\end{proof}

\section{Proper coloring pseudo-disks wrt. pseudo-disks}\label{sec:psdisk}

Note that Observation \ref{obs:delaunay} can be rephrased equivalently such that if the Delaunay-graph of $\HH$ supports $\HH$, then a proper coloring of the Delaunay-graph is a proper coloring of $\HH$. More generally, it is true that:

\begin{obs}\label{obs:supportcol}
	If a hypergraph $\HH'$ supports another hypergraph $\HH$, then a proper coloring of $\HH'$ is a proper coloring of $\HH$.
\end{obs}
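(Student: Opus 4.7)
The plan is to directly unfold the two relevant definitions and verify the coloring condition edge by edge. First I would fix a proper coloring $c$ of $\HH'$ and pick an arbitrary hyperedge $H \in \HH$. Since the paper's global convention disallows hyperedges of size $1$, I may assume $|H|\ge 2$ and my task reduces to exhibiting two vertices of $H$ receiving distinct colors under $c$.

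The second step is to exploit the supporting hypothesis: by definition, there exists $H' \in \HH'$ with $H' \subseteq H$. Since $H'$ is itself a hyperedge of $\HH'$, it too satisfies $|H'|\ge 2$ by the same convention, so the hypothesis that $c$ is proper on $\HH'$ yields two vertices $u,v\in H'$ with $c(u)\ne c(v)$. Because $H'\subseteq H$, these vertices lie in $H$ as well, and $H$ is non-monochromatic under $c$. As $H$ was arbitrary, $c$ is a proper coloring of $\HH$.

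There is really no obstacle to overcome; the lemma is a bookkeeping statement that formalizes the idea already used implicitly in Observation \ref{obs:delaunay}, which is the special case where $\HH'$ is the Delaunay-graph of $\HH$ viewed as a rank-two subhypergraph. The only point worth being careful about is the size-at-least-two convention: without it, one could take $H'=\{v\}\subseteq H$, which would fail to transfer any coloring information; but since hyperedges of size one are explicitly excluded throughout the paper, this pitfall is ruled out and the proof is complete.
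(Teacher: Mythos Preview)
Your proof is correct. The paper does not actually supply a proof for this observation---it is stated as self-evident, immediately after remarking that it generalizes Observation~\ref{obs:delaunay}---and your direct unfolding of the definitions is exactly the intended justification.
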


\begin{proof}[Proof of Theorem \ref{thm:psdiskwrtpsdisk}]
	Given the pseudo-disk family $\F$ and the finite pseudo-disk family $\B$, we want to color the vertices of $\I(\B,\F)$ corresponding to the members of $\B$ such that for every $F\in \F$ the hyperedge $H_F$ in $\B$ containing exactly those $v_B$ for which $B\cap F\ne \emptyset$ is not monochromatic (assuming that $|H_F|\ge 2$). Using Corollary \ref{cor:shrinking-closed} and Observation \ref{obs:supportcol} we can assume that $\F$ is shrinking-closed (for $\B$).
	
     Taking the direct product of a constant coloring provided by Theorem \ref{thm:psdiskwrtpt} and a $4$-coloring of the restricted Delaunay-graph which is planar by Corollary \ref{cor:romstrong}, it is not hard to see that we get a required coloring with a constant number of colors (this idea appears also in the proof of Theorem \ref{thm:linunionwrtpsdisk}). Furthermore, as we mentioned in the introduction, even the existence of a proper $6$ coloring follows immediately from Theorem \ref{thm:psdiskdelplanar} using known frameworks. However, with the operation introduced in the proof of Theorem \ref{thm:psdiskdelplanar} in hand we can show even the existence of a proper $4$ coloring, which is the optimal number of colors.
    
	We prove the existence of a $4$-coloring by induction first on the size of $\B$,  second on the size of the largest containment-minimal hyperedges and third on the number of largest containment-minimal hyperedges. We say that a hypergraph $\HH'$ is \textit{better} than a hypergraph $\HH$ (on the same vertex set) and that $\HH$ is \textit{worse} than $\HH'$ if either the size of the largest containment-minimal hyperedges is smaller in $\HH'$ than in $\HH$ or they are of the same size but there are more of them in $\HH$. 
	
	If $\B$ contains two pseudo-disks $B,C$ such that $B\subset C$, then we can proper color $\I(\B\setminus\{B\},\F)$ by induction and then color $v_B$ with a color different from the color of $v_C$  to get a proper coloring of $\I(\B,\F)$ as every hyperedge which contains $v_B$ must contain $v_C$ as well. Thus, we can assume that no two pseudo-disks in $\B$ contain one another.
	
	 We can assume that $\F$ is shrinking-closed (for $\B$) as adding non-maximal hyperedges cannot make a hypergraph worse (and also cannot increase its chromatic number). In particular, we can assume that for every point $p$ which is in some $F\in \F$, $\I(\B,\F)$ contains a hyperedge $H_p=\{v_B:p\in B\in \B\}$ (if it is of size at least two).
	 
	Take one of the largest containment-minimal hyperedges, $H\in \I(\B,\F)$.
	
	If $H$ is of size $2$ that means that the Delaunay-graph supports $\I(\B,\F)$ and then by Theorem \ref{thm:psdiskdelplanar} we can color it with $4$ colors, which by Observation \ref{obs:supportcol} is a proper coloring of $\I(\B,\F)$, as required. This starts the induction.

	Otherwise, $H$ has $l\ge 3$ vertices.
	Assume by induction that every intersection hypergraph of pseudo-disks wrt. pseudo-disks better than $\I(\B,\F)$ admits a proper $4$-coloring.
	
	If $H$ corresponds to some $F$ that contains only $1$-deep points, then using Corollary \ref{cor:shrink2} and that $\F$ is shrinking-closed we get that $H$ is not containment-minimal, a contradiction. Thus, $F$ contains a point which is at least $2$-deep and then using that $H$ is containment-minimal and $\F$ is shrinking-closed there must be a point $p$ (any point inside a pseudo-disk corresponding to $H$ is such) for which actually $H=H_p$. Take two members $B,C\in B$ for which $v_B,v_C\in H_p$. On $B\cap C$ we do the first two steps of the operation used in the proof of Theorem \ref{thm:psdiskdelplanar} to get a new pseudo-disk family $\B'$. As we have seen, the intersection hypergraph $\I(\B',\F)$ supports $\I(\B,\F)$ and thus $\I(\B',\F)$ is not worse than $\I(\B,\F)$. If $B\cap C$ in $\B'$ contains a $2$-deep point, then $\{v_B,v_C\}$ is a hyperedge of  $\I(\B',\F)$ and the number of size-$l$ hyperedges did actually decrease (as $H$ is not containment minimal anymore) and so $\I(\B',\F)$ is better than $\I(\B,\F)$. Then by induction $\I(\B',\F)$ can be colored with $4$ colors, which by Observation \ref{obs:supportcol} is a proper coloring of $\I(\B,\F)$ as well.
	
	The only case left is when $B\cap C$ still does not contain a $2$-deep point. In this case similarly to Step 3 of the operation in the proof of Theorem \ref{thm:psdiskdelplanar}, we redraw  $\partial C$ such that we change $a_C$ to $a^*$ (see again the right side of Figure \ref{fig:pullapart}). In the new family $\B''$ every point in $R$ is still covered at least twice, and nothing changed outside $R$, thus $\I(\B'',\F)$ supports $\I(\B',F)$ (and then in turn it suppors $\I(\B,\F)$). Moreover, the hyperedge corresponding to $p$ is a subset of $H_p$ but it does not contain $v_C$ anymore, so it is a proper subset of $H$ and is of size at least $2$. This implies that the number of size-$l$ hyperedges did decrease. Then again $\I(\B'',\F)$  is better than $\I(\B,\F)$ and then by induction can be colored with $4$ colors, which by Observation \ref{obs:supportcol} is a proper coloring of $\I(\B,\F)$ as well.
\end{proof}
Observe that already for coloring points wrt. disks we may need $4$ colors (as there are points whose intersection hypergraph wrt. disks is a complete graph on four vertices), so $4$ is also a lower bound for coloring pseudo-disks wrt. pseudo-disks.

Without going into details, we note that from the proofs we can create (low-degree) polynomial time coloring algorithms, supposing that initially we are given reasonable amount of information (e.g., the whole structure of the arrangement (all the vertices, edges and faces) of the pseudo-disks of $\B$ and also for every hyperedge $H$ in $\I(\B,\F)$ we are given an $F$ corresponding to $H$).

\section{Consequences}\label{sec:cfvc}

First we prove Corollary \ref{cor:psdiskcf} about conflict-free coloring the intersection hypergraph of pseudo-disks wrt. pseudo-disks.

\begin{proof}[Proof of Corollary \ref{cor:psdiskcf}]
    As mentioned already, the proof uses the by now standard framework of \cite{smorod-onthe}.
    Given the family $\B$ containing $n$ regions, the first step is to color it properly wrt. $\F$ using $c$ dummy colors using Theorem \ref{thm:psdiskwrtpsdisk} ($c=4$). Take the biggest color class $\C_1$ and give it color $1$. We continue with $\B_1=\B\setminus \C_1$. In the $s$'th step we start with the subfamily $\B_{s-1}$ and we color it properly wrt. $\F$ using $c$ dummy colors and take the biggest color class $\C_{s}$ and give it color $s$ and we continue the process with $\B_{s}=\B_{s-1}\setminus C_{s}$. We repeat this until we color all members of $\B$. In each step the remaining family is reduced by a ratio at least $(c-1)/c$ and so we finish in $O(\log n)$ steps. It is easy to check that this coloring is indeed conflict-free, as in every hyperedge $H_F$ defined by a region $F\in \F$ take the highest color $s$ appearing on its vertices (corresponding to members of $\B$). This color must appear only on one vertex otherwise at step $s$ the hyperedge defined by $F$ would have been a monochromatic hyperedge with at least two vertices in the dummy coloring, contradicting the fact that this dummy coloring was proper.
\end{proof}

Now we continue with the results related to the VC-dimension of the intersection hypergraph of pseudo-disks wrt. pseudo-disks. In order to do this, we assume that the reader is familiar with the definitions of VC-dimension and shattering. As we mentioned, Aronov et al. \cite{aronov18} proved that given a pseudo-disk family $\F$ and a finite subfamily $\B$ of $\F$, the intersection hypergraph $\I(\B,\F)$ has VC-dimension at most $4$ and this bound is tight. We now show that the same way Lemma \ref{lem:psdiskownpointdelplanar} implies that $\I(\B,\F)$ has VC-dimension at most $4$ when $\B$ is any finite pseudo-disk family, not necessarily a subfamily of $\F$:

\begin{proof}[Proof of Theorem \ref{thm:vcdim}]
    The proof is essentially the same as in \cite{aronov18}, except that we use Lemma \ref{lem:psdiskownpointdelplanar}. In a shattered subset $V_s$ of vertices in $\I(\B,\F)$ by definition every subset of $V_s$ is a hyperedge. Thus, for every vertex $v_B\in V_s$, $\{v_B\}$ is a hyperedge defined by some $F_B\in \F$ and so the points in $B\cap F_B$ must be own-points of $B$. Also, all pairs of vertices in $V_s$ must define a Delaunay-edge by definition of shattering, while the Delaunay-graph is planar by Lemma \ref{lem:psdiskownpointdelplanar}. As a planar complete graph has at most $4$ points, $|V_s|\le 4$, as required.
    
    Finally, the construction of \cite{buzaglo} showing that the bound was tight already when $\B$ is a subfamily of $\F$ shows that it is still tight in our more general setting.
\end{proof}

Theorem \ref{thm:ksets} follows directly from the arguments of \cite{aronov18}, except that one needs to use Theorem \ref{thm:vcdim} instead of the weaker version they used (where $\B$ was a subfamily of $\F$). The details are left to the interested reader.

\section{Coloring a family with linear union complexity wrt. pseudo-disks}\label{sec:linunion}

As in the literature in the definition of union complexity sometimes vertices and other times edges of the arrangement are counted, we first show that this does not affect the property of having linear union complexity (apart from slightly changing the constant $c$ in having $c$-linear union complexity):

\begin{lemma}\label{lem:unioncompldef}
    Given a family $\B$ of $n$ Jordan regions, whose boundary contains $e(\partial\B)$ edges and $v(\partial \B)$ vertices of the arrangement, $v(\partial\B)\le e(\partial\B)\le v(\partial\B)+n$.
\end{lemma}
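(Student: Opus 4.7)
The plan is to analyze the boundary $\partial\bigl(\bigcup_{B\in\B} B\bigr)$ by decomposing it into its connected components and counting vertices and edges on each component separately.

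First I would observe that each connected component of the boundary of the union is itself a simple closed Jordan curve: it is a closed $1$-dimensional submanifold of the plane (away from the arrangement vertices it is locally a piece of some $\partial B$, and at every arrangement vertex on the boundary of the union exactly two boundary arcs meet, so the boundary is a $1$-manifold). Call a component \emph{plain} if it contains no arrangement vertex, and \emph{subdivided} otherwise.

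Next I would count edges per component. A subdivided component containing exactly $k\ge 1$ vertices of the arrangement is cut by those vertices into exactly $k$ arcs, each of which is a full edge of the arrangement on the boundary of $\bigcup \B$. A plain component contains $0$ arrangement vertices and consists of a single edge of the arrangement, which is a closed curve and hence must equal $\partial B$ for some $B\in\B$ whose boundary has no crossings at all with the boundaries of the other regions and which contributes a component to $\partial(\bigcup \B)$. Summing over components gives the exact identity
\[
e(\partial\B) \;=\; v(\partial\B) \;+\; (\text{number of plain components}).
\]

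Finally, since distinct plain components of $\partial(\bigcup \B)$ arise from distinct members of $\B$ (a plain component is literally the entire $\partial B$ of some $B$, and a Jordan region has only one boundary curve), the number of plain components is at most $n$. Combining this with the displayed identity immediately yields $v(\partial\B)\le e(\partial\B)\le v(\partial\B)+n$, as required.

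The main (minor) obstacle is the careful verification of the manifold statement that exactly two boundary arcs meet at each arrangement vertex that lies on $\partial(\bigcup \B)$; this is where the Jordan-region hypothesis and the fact that boundaries only meet in isolated crossing points are used. Everything else is a direct Euler-style bookkeeping argument over the components.
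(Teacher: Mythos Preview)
Your proof is correct and follows essentially the same approach as the paper: both decompose $\partial(\bigcup\B)$ into closed curves and observe that on each such curve the number of boundary edges equals the number of boundary vertices unless the curve carries no vertex, in which case it is the full boundary $\partial B$ of a single region. The paper phrases this as a clockwise walk (each vertex is followed by a distinct edge, and each edge is preceded by a distinct vertex except when the edge is itself an entire closed curve), but the underlying counting---and the implicit general-position assumption you flag as the ``minor obstacle''---is identical.
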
	

\begin{proof}
    Going clockwise around the closed Jordan curves forming $\partial \B$, the boundary of $\B$, we see that every vertex of the arrangement is followed by a (different) edge of the arrangement, hence $v(\partial\B)\le e(\partial\B)$. On the other hand every edge is preceded by a (different) vertex except the ones that alone form a closed Jordan curve, that is, that form the boundary of a single region in $\B$. There are at most $n$ such edges, thus $e(\partial\B)\le v(\partial\B)+n$.
\end{proof}

Before proving Theorem \ref{thm:linunionwrtpsdisk} we make some preparations. First we prove an easy lemma using random sampling, that is, using the Clarkson-Shor method \cite{clarksonshor}. 
We say that a face is $k$-deep if its interior points are $k$-deep, that is, the face is contained in exactly $k$ members of the family.

\begin{lemma}\label{lem:kdeep}
    In a family $\B$ of $n$ Jordan regions with linear union complexity, the number of $k$-deep faces is $O_k(n)$.
\end{lemma}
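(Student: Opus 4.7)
The plan is to combine a standard Clarkson--Shor random sampling argument with a short charging argument that reduces counting $k$-deep faces to counting arrangement vertices of depth at most $k$.

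For the charging step, I set aside the $O(n)$ faces whose boundary is a single closed Jordan curve belonging to one member of $\B$ and therefore contains no arrangement vertex. Any remaining $k$-deep face $f$ has some arrangement vertex $v\in\partial f$. At $v$, two boundary curves $\partial B_1,\partial B_2$ cross transversally, producing four local sectors; writing $d$ for the number of regions in $\B\setminus\{B_1,B_2\}$ whose interior contains $v$, the four sectors have depths $d,d+1,d+1,d+2$. Since $f$ is one of these sectors and has depth $k$, we get $d\le k$. As each vertex is incident to at most four faces, the number of $k$-deep faces is at most $4$ times the number of arrangement vertices of depth at most $k$, plus $O(n)$.

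To bound the number of vertices of depth at most $k$, I take a random subfamily $\B'\subseteq\B$ that includes each region independently with probability $p=1/(k+1)$. For any vertex $v$ of the arrangement of $\B$ of depth $d$, with defining regions $B_1,B_2$, the point $v$ lies on the boundary of $\cup_{B\in\B'}B$ precisely when $B_1,B_2\in\B'$ and none of the $d$ regions containing $v$ is in $\B'$, an event of probability $p^2(1-p)^d$. On the other hand, Lemma \ref{lem:unioncompldef} together with the $c$-linear union complexity assumption bounds the number of vertices on $\partial(\cup_{B\in\B'}B)$ by $\U(\B')\le c|\B'|$, whose expectation is $cnp$. Linearity of expectation then gives
$$
\sum_{v}p^2(1-p)^{\mathrm{depth}(v)}\;\le\;cnp,
$$
and restricting the sum to vertices of depth at most $k$ (where $(1-p)^k\ge 1/e$) yields $\#\{v:\mathrm{depth}(v)\le k\}=O((k+1)n)=O_k(n)$. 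Combined with the charging, the number of $k$-deep faces is also $O_k(n)$.

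The only non-routine point I foresee is the bookkeeping: defining vertex depth consistently, translating between edges and vertices on the union boundary via Lemma \ref{lem:unioncompldef}, and cleanly handling the vertex-free faces. Everything else is the classical Clarkson--Shor exchange between a linear union complexity bound and a bound on the number of low-depth features.
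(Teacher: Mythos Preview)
Your proof is correct and follows essentially the same Clarkson--Shor random sampling approach as the paper. The only cosmetic difference is that the paper charges each $k$-deep face to a boundary \emph{edge} (and samples with probability $1/2$), whereas you charge to a boundary \emph{vertex} after setting aside the $O(n)$ vertex-free faces (and sample with probability $1/(k+1)$); both routes exploit the linear union complexity hypothesis in the same way.
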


\begin{proof}	
    In the arrangement of $\B$, associate with each $k$-deep face $T$ one of the edges $e_T$ on its boundary. No edge is associated with two faces this way. 
    
    We count the number $s$ of associated edges $e_T$.
    Take a random subfamily $\B'$ of $\B$ by taking each $B\in \B$ with probability $1/2$. With probability at least $1/2^{k+3}$ an edge $e_T$ counted in $s$ ends up on $\partial \B'$, the boundary of $\B'$. Indeed, this happens whenever the at most $k$ regions that contain this edge in their interior are not in $\B'$ but the region whose boundary contains $e_T$ and two other regions whose boundaries go through the two endvertices of $e_T$ are in $\B'$. Thus $1/2^{k+3} s\le E(e(\partial \B'))\le E(c|\B'|)\le cn$ implying that $s=O_k(n)$. The second inequality followed from our assumption that $\B$ has $c$-linear union complexity.
    
    The number of $k$-deep faces equals the number $s$ of associated edges, which is $O_k(n)$, concluding the proof.
\end{proof}

It will be comfortable to use the following definition:

\begin{defi}
    A graph $G$ is \textbf{$k$-degenerate} if every subgraph of $G$ contains a point with degree at most $k$.
\end{defi}

If $G$ is $k$-degenerate for some constant $k$ we also simply say that $G$ is degenerate. For example planar graphs are $5$-degenerate. The following is a well-known property of degenerate graphs:

\begin{obs}\label{obs:degenerate}
    If a graph $G$ is $k$-degenerate, then we can proper color it with $k+1$ colors.
\end{obs}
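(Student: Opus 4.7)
The plan is to prove this classical fact by straightforward induction on the number of vertices $n=|V(G)|$. The base case $n\le k+1$ is trivial, since we can give each vertex its own color. For the inductive step I would use the definition of $k$-degeneracy applied to $G$ itself to locate a vertex $v$ with degree at most $k$. Removing $v$ yields a subgraph $G-v$ which is still $k$-degenerate: every subgraph of $G-v$ is also a subgraph of $G$, so it must contain a vertex of degree at most $k$. By the inductive hypothesis $G-v$ admits a proper coloring with $k+1$ colors. Since $v$ has at most $k$ neighbors in $G$, the colors appearing on these neighbors form a set of size at most $k$, so among the $k+1$ colors at least one is unused on the neighborhood of $v$. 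Assigning such a free color to $v$ extends the coloring of $G-v$ to a proper $(k+1)$-coloring of $G$.

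Equivalently, one can present the argument as a greedy coloring along a so-called degeneracy ordering: iteratively peel off a vertex of minimum degree to produce an ordering $v_1,\dots,v_n$ in which each $v_i$ has at most $k$ neighbors among $v_{i+1},\dots,v_n$, and then color $v_n,v_{n-1},\dots,v_1$ in reverse, giving each $v_i$ a color different from its (at most $k$) already-colored neighbors. Both formulations rely on the same single observation, namely that the $k$-degeneracy property is hereditary under taking subgraphs, and there is essentially no obstacle: this is standard textbook material included here only to make the appeal to degeneracy in the proof of Theorem \ref{thm:linunionwrtpsdisk} self-contained.
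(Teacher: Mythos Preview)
Your proof is correct and follows essentially the same approach as the paper: both remove a vertex of degree at most $k$, color the remaining (still $k$-degenerate) graph by induction with $k+1$ colors, and then extend the coloring to $v$ using a color avoided by its at most $k$ neighbors. Your write-up is simply more detailed, making explicit the base case and the hereditary nature of $k$-degeneracy that the paper leaves implicit.
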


\begin{proof}
    Take a smallest degree vertex $v$ of $G$ and proper color with $k+1$ colors by induction the vertices of the subgraph induced by the rest of the vertices. Finally, color $v$ with a color different from the color of its at most $k$ neighbors.
\end{proof}

\begin{proof}[Proof of Theorem \ref{thm:linunionwrtpsdisk}]
    
    Given the family $\B$ of $n$ regions with bounded union complexity and the family $\F$ of pseudo-disks, we want to color the vertices of $\I(\B,\F)$ corresponding to the members of $\B$ such that for every $F\in \F$ the hyperedge $H_F$ in $\B$ containing those $v_B$ for which $B\cap F\ne \emptyset$ is not monochromatic whenever it is of size at least $2$.
    
    Using Corollary \ref{cor:shrinking-closed} we can assume that $\F$ is shrinking-closed (for $\B$). In particular, for every point $p$ which is in some $F\in\F$, $H_p=\{v_B:p\in B\in \B\}$ is a hyperedge of $\I(\B,\F)$ if it is of size at least two.
    
    \smallskip
    
    First we color $\B$ wrt. points, that is, we color the vertices $v_B$ with $c_1$ many colors for some constant $c_1$ such that for an arbitrary point $p$ in the plane the hyperedge $H_p=\{v_B:p\in B\in \B\}$ is not monochromatic whenever it is of size at least $2$. Note that this is the same as Theorem \ref{thm:linunionwrtpt}, we include its proof for the sake of completeness.
    
    By Lemma \ref{lem:kdeep} there are linearly many different $H_p$'s with $|H_p|=2$, in other words the Delaunay-graph of the regions of $\B$ wrt. points has linearly many edges. Thus there is a vertex $v_B$ with degree bounded by a constant.  Color by induction the Delaunay-graph of the family we get by deleting $B$ from $\B$ with constantly many colors and then color $v_B$ with a color different from the colors of its neighbors in the Delaunay-graph of $\B$. We claim that this is a coloring in which each $H_p$ of size at least two is not monochromatic. By induction this is true for every $H_p$ that contains at least two vertices besides $v_B$. Thus only $H_p$'s which are of size exactly two and contain $v_B$ may be monochromatic at this point. However, we colored $v_B$ exactly in a way to make these hyperedges also non-monochromatic.\footnote{In order to proper color $\B$ wrt. points, it is tempting to simply proper color the Delaunay-graph by Observation \ref{obs:degenerate}. This would not be correct, as the Delaunay-graph might not support the intersection hypergraph of $\B$ wrt. points.}
    
    \smallskip
    
    Take an arbitrary $F\in \F$ such that $H_F$ is of size at least $2$.
    
    If $F\in \F$ contains a point $p$ which is in at least two members of $\B$, then $H_p$ is a subset of $H_F$ and is not monochromatic, thus $H_F$ is also not monochromatic.

    Thus, we only need to take care of the hyperedges defined by $F\in \F$ that do not contain any point $p$ that is in at least two members of $\B$. We do this by defining a second coloring using $c_2$ many colors.
    
    In this case $F\in \F$ intersects the members of $\B$ in disjoint regions. By Corollary \ref{cor:shrink2} and as $\F$ is shrinking-closed, there exists an $F'\in \F, F'\subseteq F$ such that $F'$ intersects exactly two members of $\B$. This means exactly that every such $\F$ contains as a subset an edge of the restricted Delaunay-graph $G$ of $\B$ wrt. $\F$. Next we prove that $G$ can be colored properly with constantly many colors.

    \begin{figure}
        \centering
        \includegraphics[height=8cm]{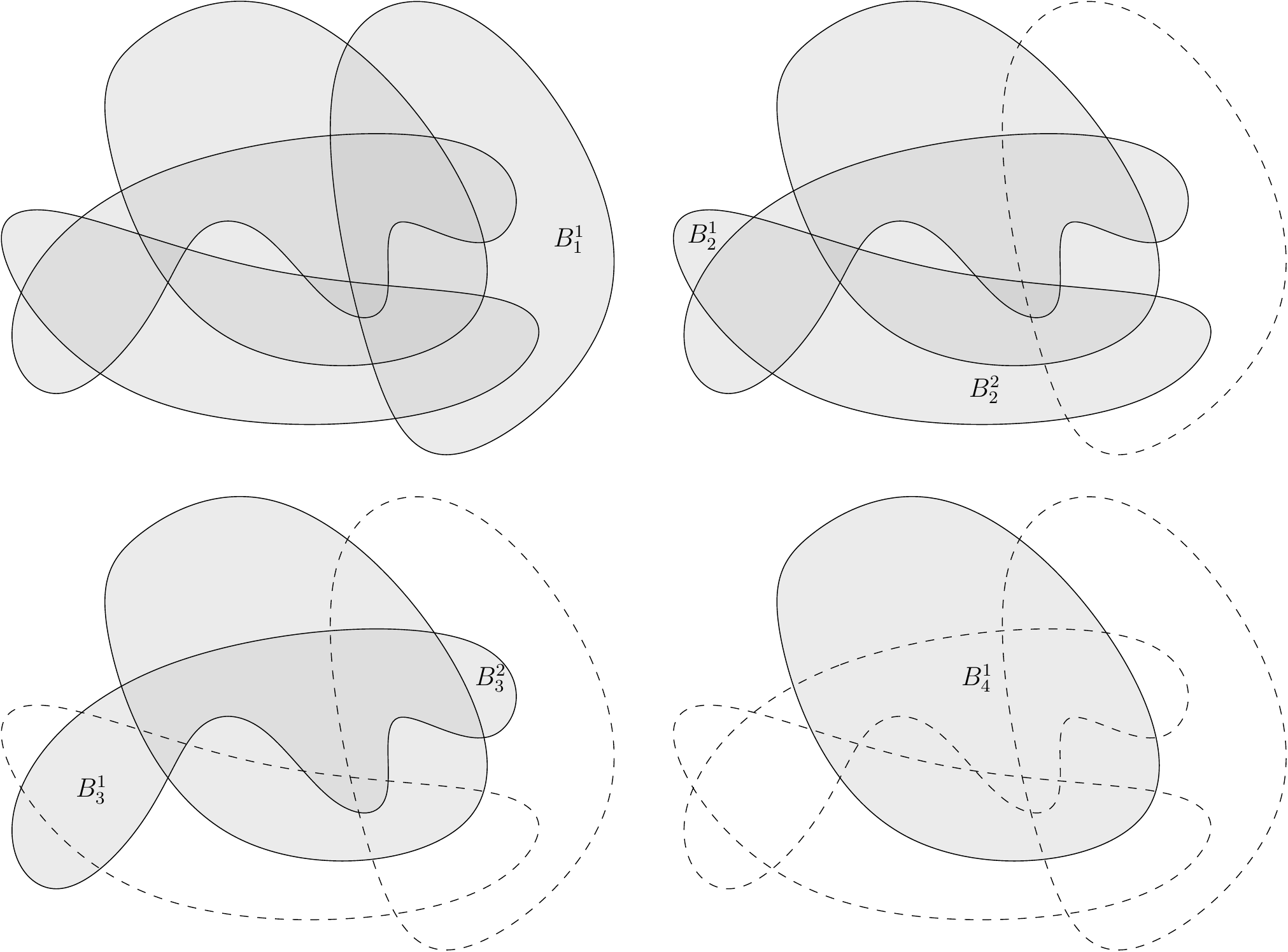}
        \caption{Steps of the construction of the disjoint subregions.}
        \label{fig:subregions}	
    \end{figure}
    
    We define subregions of the regions of $\B$ in the following way (see Figure \ref{fig:subregions} for an example).
    By Lemma \ref{lem:kdeep} there are linearly many $1$-deep faces. Thus, there is a $B_1\in B$ which contains only constantly many $1$-deep faces. Let $B_1^1,B_1^2,\dots B_1^{z_i}$ be these faces. Now we repeat this process for $\B\setminus \{B_1\}$ (and define in a $B_2\in \B\setminus \{B_1\} $ constantly many subregions, etc.) until there are no regions left. As by definition linear union complexity is hereditary, we can apply Lemma \ref{lem:kdeep} at every step and so at the end in every region we defined constantly many subregions (with the same constant), we collect these into the family $\B'$\footnote{Note that every region in $\B'$ is a face of the arrangement of the subfamily of regions of $\B$ at that moment in the process, but usually a union of more faces of the original arrangement $\B$.}. Observe that the regions in $\B'$ are connected, pairwise disjoint and their union equals the union of the regions in $\B$ (apart from the boundaries). That is, $\B'$ is a partition of the union of the regions in $\B$ (apart from the boundaries). In particular, every $1$-deep face of the arrangement $\B$ is contained in one of the regions in $\B'$.
    Notice that $\B'$ has at most constant times $n$ regions.
    
    Now we define a graph $G'$ with the same number of edges as $G$ and with at most constant times $n$ vertices. For every $B_i^j\in \B'$ it has a vertex $v_i^j$ associated with $B_i^j$. We connect two vertices $v_i^j$ and $v_{i'}^{j'}$ whenever there exists an $F\in \F$ which intersects exactly two members of $\B$, $B_i$ and $B_{i'}$ and furthermore $F\cap B_i\subseteq B_i^j$ and $F\cap B_{i'}\subseteq B_{i'}^{j'}$. Notice that $G'$ is a subgraph of the restricted Delaunay-graph of $\B'$ wrt. $\F$ and as $\B'$ is a family of disjoint connected regions, by Lemma \ref{lem:romdisjoint} this is a planar graph.	
    
    As $G'$ is planar, it has at most $3$ times as many edges as vertices and so it has at most constant times $n$ edges. By definition $G$ and $G'$ have the same number of edges, associated with members $F\in \F$ which intersect exactly two members of $\B$ with disjoint intersections. Thus $G$ also has linearly many edges and so must have a vertex with degree bounded by a constant. As we can apply this argument for any subfamily of $\B$ and any induced subgraph of $G$ is a subgraph of the Delaunay-graph of the family of corresponding members of $\B$, we get that $G$ is degenerate. Thus by Observation \ref{obs:degenerate} it admits a proper coloring with $c_2$ many colors for some constant $c_2$.
    
    The direct product of the above two colorings with $c_1$ and $c_2$ many colors is a proper $c_1c_2$-coloring of the hypergraph, concluding the proof.
\end{proof}

\begin{figure}
    \centering
    \includegraphics[height=6cm]{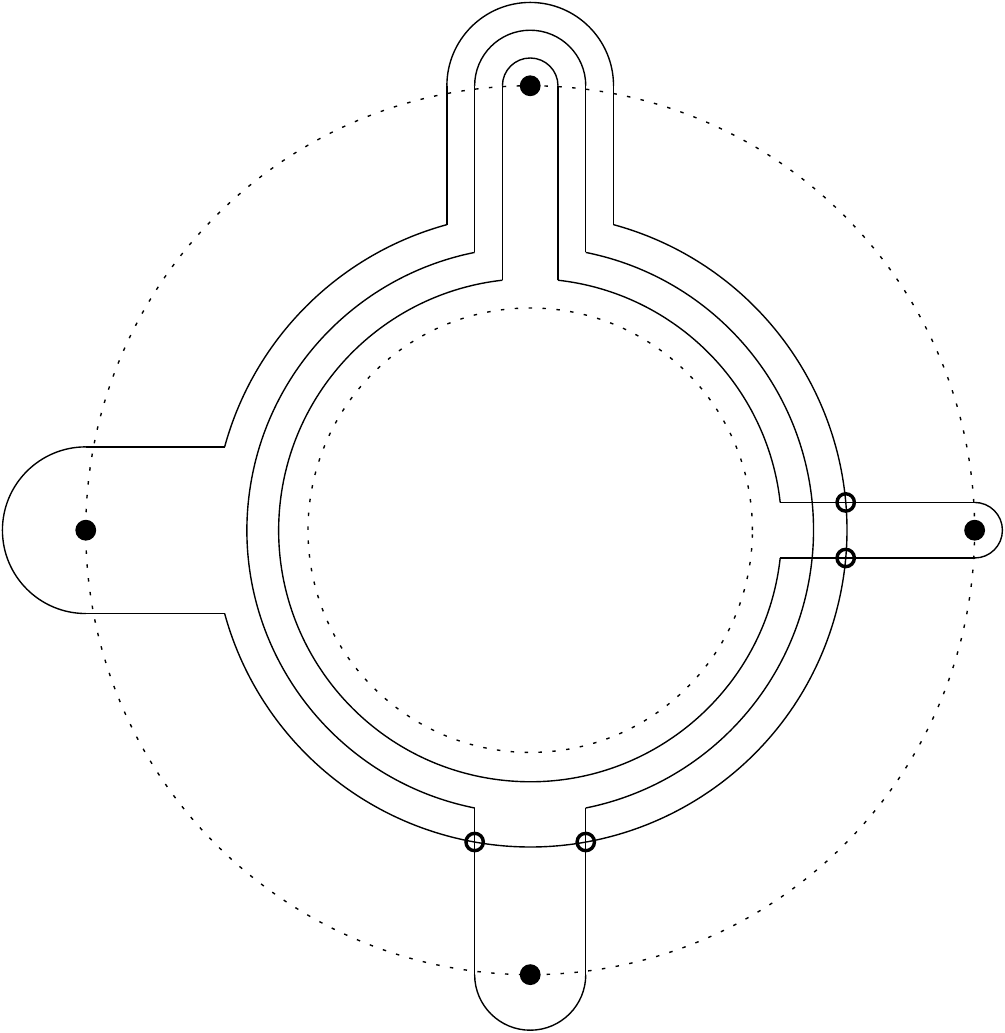}
    \caption{Construction with $n=4$ points, $3$ of the $4\choose 2$ regions are shown, their union complexity is $4$.}
    \label{fig:bearears}	
\end{figure}

\begin{proof}[Proof of Corollary \ref{cor:linunioncf}]
    The proof is exactly the same as of Corollary \ref{cor:linunioncf} just instead of Theorem \ref{thm:psdiskwrtpsdisk} we need to use Theorem \ref{thm:linunionwrtpsdisk}.
\end{proof}

We note again that the above proof of Theorem \ref{thm:linunionwrtpsdisk}  works without modification also when instead of Jordan-regions (i.e., bounded regions whose boundary is a single closed Jordan curve), $\B$ is a family of bounded regions whose boundary is a union of a finite number of disjoint Jordan-curves (in particular the regions are not necessarily simple or connected).

Without going into details, we also note again that from the proofs we can create efficient coloring algorithms. 

Now we show that we cannot always color properly points wrt. a family of linear union complexity with constantly many colors and so in particular in Theorem \ref{thm:linunionwrtpsdisk} $\F$ cannot be changed to be a family of linear union complexity.

\begin{claim}
    For every $n$ there exists a set $S$ of $n$ points and a family $\F$ of $n\choose 2$ simply connected regions with linear union complexity such that $\I(S ,\F)$ is the complete graph on $n$ vertices. More precisely, for every pair of points in $S$ there is a region in $\F$ that contains exactly these two points, every pair of regions intersects at most four times and for every $m$ the union complexity of any subfamily of size $m$ is at most $4m-4$.
\end{claim}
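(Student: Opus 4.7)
I propose an explicit construction. Place the $n$ points on the $x$-axis at integer positions $p_k = (k, 0)$ for $1 \le k \le n$, and for each pair $(p_i, p_j)$ with $i < j$ let $\gamma_{ij}$ be the upper semicircular arc from $p_i$ to $p_j$ (the upper half of the circle with diameter $[p_i, p_j]$). Define $R_{ij}$ to be the $\epsilon$-tubular neighborhood of $\gamma_{ij}$ for a small, uniform $\epsilon > 0$. Each $R_{ij}$ is a Jordan region homeomorphic to a closed disk, and for $\epsilon$ small enough it contains exactly the two points $p_i$ and $p_j$ from $S$, so that $\I(S, \F)$ is the complete graph on $S$.

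To show that any two boundaries meet in at most four points, I would case-split on the combinatorial relation of the intervals $[i,j]$ and $[i',j']$. If the intervals are disjoint, the arcs lie over disjoint $x$-ranges, so the tubes are disjoint. If one is strictly nested in the other, I use the identity $r^2 - (x-c)^2 = (b-x)(x-a)$ for a circle with diameter endpoints $a<b$: for $x$ in the inner interval $[i',j'] \subset [i,j]$ one has $(j-x)(x-i) \ge (j'-x)(x-i')$, so the inner arc lies strictly below the outer one at interior points; by compactness the two arcs stay at positive distance, hence the tubes are disjoint for small $\epsilon$. If the intervals cross (say $i < i' < j < j'$), the two full circles meet at exactly two points symmetric about the $x$-axis, and only the upper one lies on both arcs, so the arcs cross transversally at a single interior point; thickening to $\epsilon$-tubes turns this crossing into a small rhombus on whose boundary $\partial R_{ij}$ and $\partial R_{i'j'}$ are traversed exactly four times in total.

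For the union-complexity bound I would prove by induction on $m$ that any subfamily of $m$ arches has at most $4m-4$ edges on the boundary of its union, following the charging-scheme template of the Kedem--Livne--Pach--Sharir proof of linear union complexity for pseudo-disks, adapted to our case of pairwise boundary intersection at most $4$. Choose an arch $R^*$ of maximum radius in the subfamily; every other arch that meets $R^*$ must cross it transversally and, being strictly smaller, dips below $\gamma^*$ at its apex, so the top portion of $\partial R^*$ is uncovered and contributes a nonempty connected arc to $\partial U$. Peeling $R^*$ off, invoking the inductive hypothesis on the remaining $m-1$ arches, and then reinserting $R^*$ while accounting for the at-most-four new vertices per crossing arch yields, via a standard face-counting argument with Euler's formula, the additive gain of at most four edges on $\partial U$.

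The main obstacle is the bookkeeping for the tight additive constant in the induction: the construction and the pairwise-intersection analysis are direct calculations, but achieving the exact $4m-4$ bound requires the Kedem-style charging scheme carried through with care. For the qualitative statement of linear union complexity (which is all that is needed for the paper's purpose), a standard Clarkson--Shor random-sampling argument along the lines of the proof of Lemma~\ref{lem:kdeep} already suffices, at the cost of a larger hidden constant.
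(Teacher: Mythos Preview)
Your construction is different from the paper's, and while the idea is natural, the proof as written has real gaps.

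\textbf{Shared endpoints.} Your case split (disjoint / strictly nested / crossing) omits the case where the two intervals share an endpoint, e.g.\ $[1,2]$ and $[1,3]$. Both semicircles leave $p_1$ vertically, so with a \emph{uniform} $\epsilon$ the two tubes have the very same semicircular cap at $p_1$, and their boundaries coincide on an arc --- infinitely many intersections, not four. This already falsifies the ``at most four'' claim for your family as stated. One can try to repair this by using a different $\epsilon_{ij}$ for each pair, but then you must redo the pairwise analysis, and the shared-endpoint case still needs its own argument.

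\textbf{The $4m-4$ bound.} Your induction does not go through. If the maximal arch $R^*$ crosses $k$ of the remaining $m-1$ arches, reinserting it may create up to $4k$ new vertices on the union boundary, and nothing in your sketch reduces ``four per crossing arch'' to ``four total''. The Kedem--Livne--Pach--Sharir proof you invoke is specific to the two-intersection (pseudo-disk) setting and does not transfer to the four-intersection case with the same constant. Your fallback to Clarkson--Shor is also in the wrong direction: that technique bounds $k$-level complexity \emph{given} a bound on the $0$-level (union) complexity, not the other way around, so it cannot establish linear union complexity on its own.

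\textbf{How the paper does it.} The paper's construction is engineered precisely to make the union-complexity step a one-liner. Each region is a central disk of radius $1+(in+j)\epsilon$ with two thin ``ears'' reaching out to $p_i$ and $p_j$; the central disks are totally ordered by containment, so in any subfamily of size $m$ every boundary vertex of the union lies on the boundary of the single outermost region $F^0$. Since $F^0$ meets each of the other $m-1$ regions in at most four points, the bound $4(m-1)$ is immediate. Your arch family has no such nesting structure, so even if linear union complexity holds for it, you would need a genuinely different argument, and the exact constant $4$ is unlikely to fall out.
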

\begin{proof}
    The construction is the following: distribute evenly $n$ points $S=\{p_0,\dots p_{n-1}
    \}$ on a circle of radius $2$ around the origin $o$.
    Choose an $\epsilon$ small enough.
    Let $F_{\{i,j\}}$ be the region we get by taking the union of the following three regions: a disk of radius $1+(in+j)\epsilon$, the Minkowski sum of the segment $op_i$ and a disk of radius $(in+j)\epsilon$ and the Minkowski sum of the segment $op_j$ and a disk of radius $(in+j)\epsilon$. See Figure \ref{fig:bearears}.

    Notice that $F_{\{i,j\}}$ contains $p_i$ and $p_j$ but no other points of $S$ if $\epsilon$ is small enough, thus $\I(S ,\F)$ is the complete graph, as required.
    
    It is easy to see that two regions can intersect at most $4$ times. For an arbitrary subfamily $\F'$ of $m$ regions take the region $F^0=F_{\{i,j\}}\in \F'$ for which $in+j$ is largest possible. By the construction every intersection point of boundaries that are on the boundary of $\cup\F'$ is on the boundary of $F^0$. As $F^0$ intersects all other regions in at most $4$ points, we get the upper bound $4m-4$ on the union complexity of $\F'$. 
    
    We remark that it is not much harder to show that $\min(2n-4,4m-4)$ is also an upper bound on the complexity of $\F'$.
\end{proof}

\section{Discussion}\label{sec:discussion}
As mentioned, the primary motivation behind considering such problems lies in applications to conflict-free colorings and to cover-decomposability problems.

When the main interest is in conflict-free colorings, the $\log n$ factor makes it less interesting to optimize the bound on the proper coloring result. However, when considering the relation to cover-decomposability problems, finding the exact value is of great interest. This makes it important that in Theorem \ref{thm:psdiskwrtpsdisk} we could prove an exact upper bound for such a wide class of intersection hypergraphs. On the other hand, in Theorem \ref{thm:linunionwrtpsdisk} we aimed for keeping the proof as simple as possible, and so we did not optimize the number of colors.

In cover-decomposability problems the typical question asks if for some $m$ we can properly $2$-color $\I_{m}(\B,\F)$, the subhypergraph of $\I(\B,\F)$ containing only hyperedges of size at least $m$. If we properly color $\I_{m}(\B,\F)$, then we say that we color $\B$ wrt. $\F$ for $m$.

Usually either $\B$ or $\F$ is the family of points of the plane. Originally researchers concentrated on the problem when the other family, $\F$ or $\B$, is the family of translates of a convex region (see, e.g., \cite{surveycd} for a history of this research direction).

In the past years researchers started to concentrate more on problems where $\B$ or $\F$ is the family of homothets of a convex region. One of the the most intriguing open problems is whether we can $2$-color points wrt. homothets of a given convex polygon for some constant $m$. The existence of a $2$-coloring for some $m$ was proved for triangles \cite{octant} and the square \cite{homotsquare} and recently for convex polygons the existence of a $3$-coloring for some $m$ was proved \cite{3propercol}. On the other hand, for coloring points wrt. disks $2$ colors are not enough (for any $m$) \cite{indec}. As we have seen already in the introduction, a $4$-coloring of points wrt. disks always exists (for $m=2$), however the existence of a $3$-coloring (for some $m$) is an open problem (see also the remark after Problem \ref{prob:general}).

For the dual problem, about coloring the homothets of a convex polygon wrt. points, while proper $2$-coloring exists for the homothets of triangles \cite{octant} for some $m$, for the homothets of any other convex polygon there is not always a $2$-coloring \cite{kovacs} (for any $m$). Also, for coloring disks wrt. points $2$ colors are not enough (for any $m$) \cite{unsplittable}.

In \cite{3propercol} it was conjectured that points wrt. pseudo-disks and pseudo-disks wrt. points can be proper $3$-colored (for some $m$). Encouraged by Theorem \ref{thm:psdiskwrtpsdisk} we pose the common generalization of these conjectures:

\begin{problem}\label{prob:general}
	Does there exist a constant $m$ such that given a family $\F$ of pseudo-disks and a finite family $\B$ of pseudo-disks, $\I_m(\B,\F)$ always admits a proper coloring with $3$ colors?
\end{problem}

We note that T\'oth \cite{gezapc} showed an example that choosing $m=3$ is not enough already when $\B$ is a set of points and $\F$ is a family of pseudo-disks.

The case when $\B$ is a family of points and $\F$ is the family of disks was asked by the author of this paper more than $10$ years ago \cite{wcf1,wcf2} and is still open (in this special case even $m=3$ might be true).

As we mentioned in the introduction, one can easily change in the definition of the incidence hypergraph the incidence relation to the containment or reverse-containment relation. Thus we can define the following hypergraphs: $\HH^\subset (\B,\F)$ contains a hyperedge $H=\{v_B:B\subset F\}$ for every $F\in \F$; similarly, $\HH^\supset (\B,\F)$ contains a hyperedge $H=\{v_B:B\supset F\}$ for every $F\in \F$ (if these sets are of size at least two).

Thus we can ask if the respective variants of Theorem \ref{thm:psdiskwrtpsdisk} and Problem \ref{prob:general} hold for these hypergraphs:

\begin{problem}
	Given a family $\F$ of pseudo-disks and a finite family $\B$ of pseudo-disks, does $\HH^\subset_m (\B,\F)$ (resp. $\HH^\supset_m (\B,\F)$) always admit a proper coloring with $4$ colors?
	
	Does there exist a constant $m$ such that $\HH^\subset_m (\B,\F)$ (resp. $\HH^\supset_m (\B,\F)$) always admits a proper coloring with $3$ colors?
\end{problem}

The variant of this problem where $\F$ and $\B$ are families of intervals on the line was regarded in \cite{oktans9} where among others it was shown that for intervals these two classes (the class of containment and reverse-containment hypergraphs of intervals) are the same.

Finally we mention that similarly to pseudo-disks, pseudo-halfplanes are natural generalizations of halfplanes. For pseudo-halfplanes it was also possible to reprove the same coloring results that are known for halfplanes \cite{abafree}. Also, due to the lack of direct relation to pseudo-disks, we did not mention axis-parallel rectangles, whose intersection hypergraph coloring problems are some of the most interesting open problems of the area.

\subsubsection*{Acknowledgment}

    This paper is dedicated to my grandfather who died the same day when I found the proof for Theorem \ref{thm:psdiskwrtpsdisk} (at that time with a non-explicit constant).
    Thanks to Dömötör Pálvölgyi for the inspiring discussions about these and other problems about coloring geometric hypergraphs defined by pseudo-disks. Thanks to Chaya Keller 
    for many useful remarks about the realization of this paper.

\bibliographystyle{plainurl}
\bibliography{psdisk}

\begin{thebibliography}{10}

\bibitem{homotsquare}
Eyal Ackerman, Bal{\'{a}}zs Keszegh, and Mate Vizer.
\newblock Coloring points with respect to squares.
\newblock {\em Discrete {\&} Computational Geometry}, jun 2017.

\bibitem{agarwal}
Pankaj~K Agarwal, Eran Nevo, J{\'a}nos Pach, Rom Pinchasi, Micha Sharir, and
  Shakhar Smorodinsky.
\newblock Lenses in arrangements of pseudo-circles and their applications.
\newblock {\em Journal of the ACM (JACM)}, 51(2):139--186, 2004.

\bibitem{aronov18}
B.~{Aronov}, A.~{Donakonda}, E.~{Ezra}, and R.~{Pinchasi}.
\newblock {On Pseudo-disk Hypergraphs}.
\newblock {\em ArXiv e-prints}, February 2018.
\newblock \href {http://arxiv.org/abs/1802.08799} {\path{arXiv:1802.08799}}.

\bibitem{buzaglo}
Sarit Buzaglo, Rom Pinchasi, and G{\"{u}}nter Rote.
\newblock Topological hypergraphs.
\newblock In {\em Thirty Essays on Geometric Graph Theory}, pages 71--81.
  Springer New York, oct 2012.

\bibitem{cardinalkorman}
Jean Cardinal and Matias Korman.
\newblock Coloring planar homothets and three-dimensional hypergraphs.
\newblock {\em Computational geometry}, 46(9):1027--1035, 2013.

\bibitem{pseudodisjoint}
Timothy~M. Chan and Sariel Har{-}Peled.
\newblock Approximation algorithms for maximum independent set of pseudo-disks.
\newblock {\em Discrete {\&} Computational Geometry}, 48(2):373--392, 2012.

\bibitem{hanani1}
Chaim Chojnacki.
\newblock {\"U}ber wesentlich unpl{\"a}ttbare kurven im dreidimensionalen
  raume.
\newblock {\em Fundamenta Mathematicae}, 23(1):135--142, 1934.

\bibitem{clarksonshor}
Kenneth~L Clarkson and Peter~W Shor.
\newblock Applications of random sampling in computational geometry, ii.
\newblock {\em Discrete \& Computational Geometry}, 4(5):387--421, 1989.

\bibitem{evenlotker}
Guy Even, Zvi Lotker, Dana Ron, and Shakhar Smorodinsky.
\newblock Conflict-free colorings of simple geometric regions with applications
  to frequency assignment in cellular networks.
\newblock {\em {SIAM} Journal on Computing}, 33(1):94--136, jan 2003.

\bibitem{fekete-int}
S.~P. {Fekete} and P.~{Keldenich}.
\newblock {Conflict-Free Coloring of Intersection Graphs}.
\newblock {\em ArXiv e-prints}, September 2017.
\newblock \href {http://arxiv.org/abs/1709.03876} {\path{arXiv:1709.03876}}.

\bibitem{pseudoerdosszekeres}
Andreas~F. {Holmsen}, Hossein {Nassajian Mojarrad}, János {Pach}, and Gábor
  {Tardos}.
\newblock {Two extensions of the Erd\H os-Szekeres problem}.
\newblock {\em ArXiv e-prints}, October 2017.
\newblock \href {http://arxiv.org/abs/1710.11415} {\path{arXiv:1710.11415}}.

\bibitem{Kedem1986}
Klara Kedem, Ron Livne, J{\'a}nos Pach, and Micha Sharir.
\newblock On the union of jordan regions and collision-free translational
  motion amidst polygonal obstacles.
\newblock {\em Discrete {\&} Computational Geometry}, 1(1):59--71, Mar 1986.

\bibitem{smorod-int}
C.~{Keller} and S.~{Smorodinsky}.
\newblock {Conflict-Free Coloring of Intersection Graphs of Geometric Objects}.
\newblock {\em ArXiv e-prints}, April 2017.
\newblock \href {http://arxiv.org/abs/1704.02018} {\path{arXiv:1704.02018}}.

\bibitem{wcf1}
Bal{\'{a}}zs Keszegh.
\newblock Weak conflict-free colorings of point sets and simple regions.
\newblock In Prosenjit Bose, editor, {\em Proceedings of the 19th Annual
  Canadian Conference on Computational Geometry, {CCCG} 2007, August 20-22,
  2007, Carleton University, Ottawa, Canada}, pages 97--100. Carleton
  University, Ottawa, Canada, 2007.

\bibitem{wcf2}
Bal{\'{a}}zs Keszegh.
\newblock Coloring half-planes and bottomless rectangles.
\newblock {\em Computational geometry}, 45(9):495--507, 2012.

\bibitem{octant}
Bal{\'{a}}zs Keszegh and D{\"{o}}m{\"{o}}t{\"{o}}r P{\'{a}}lv{\"{o}}lgyi.
\newblock Octants are cover-decomposable.
\newblock {\em Discrete {\&} Computational Geometry}, 47(3):598--609, 2012.

\bibitem{abafree}
Bal{\'{a}}zs Keszegh and D{\"{o}}m{\"{o}}t{\"{o}}r P{\'{a}}lv{\"{o}}lgyi.
\newblock An abstract approach to polychromatic coloring: Shallow hitting sets
  in aba-free hypergraphs and pseudohalfplanes.
\newblock In Ernst~W. Mayr, editor, {\em Graph-Theoretic Concepts in Computer
  Science - 41st International Workshop, {WG} 2015, Garching, Germany, June
  17-19, 2015, Revised Papers}, volume 9224 of {\em Lecture Notes in Computer
  Science}, pages 266--280. Springer, 2015.

\bibitem{oktans9}
Bal{\'{a}}zs Keszegh and D{\"{o}}m{\"{o}}t{\"{o}}r P{\'{a}}lv{\"{o}}lgyi.
\newblock More on decomposing coverings by octants.
\newblock {\em Journal of Computational Geometry}, 6(1):300--315, 2015.

\bibitem{3propercol}
Bal{\'{a}}zs Keszegh and D{\"{o}}m{\"{o}}t{\"{o}}r P{\'{a}}lv{\"{o}}lgyi.
\newblock Proper coloring of geometric hypergraphs.
\newblock In {\em Symposium on Computational Geometry}, volume~77 of {\em
  LIPIcs}, pages 47:1--47:15. Schloss Dagstuhl - Leibniz-Zentrum fuer
  Informatik, 2017.

\bibitem{kovacs}
Istv{\'a}n Kov{\'a}cs.
\newblock Indecomposable coverings with homothetic polygons.
\newblock {\em Discrete \& Computational Geometry}, 53(4):817--824, 2015.

\bibitem{unsplittable}
J{\'{a}}nos Pach and D{\"{o}}m{\"{o}}t{\"{o}}r P{\'{a}}lv{\"{o}}lgyi.
\newblock Unsplittable coverings in the plane.
\newblock In Ernst~W. Mayr, editor, {\em Graph-Theoretic Concepts in Computer
  Science - 41st International Workshop, {WG} 2015, Garching, Germany, June
  17-19, 2015, Revised Papers}, volume 9224 of {\em Lecture Notes in Computer
  Science}, pages 281--296. Springer, 2015.

\bibitem{surveycd}
J{\'a}nos Pach, D{\"o}m{\"o}t{\"o}r P{\'a}lv{\"o}lgyi, and G{\'e}za T{\'o}th.
\newblock Survey on decomposition of multiple coverings.
\newblock In {\em Geometry--Intuitive, Discrete, and Convex}, pages 219--257.
  Springer, 2013.

\bibitem{pachsharir2}
J{\'a}nos Pach and Micha Sharir.
\newblock On the boundary of the union of planar convex sets.
\newblock {\em Discrete \& Computational Geometry}, 21(3):321--328, 1999.

\bibitem{indec}
J{\'a}nos Pach, G{\'a}bor Tardos, and G{\'e}za T{\'o}th.
\newblock Indecomposable coverings.
\newblock In {\em Discrete Geometry, Combinatorics and Graph Theory}, pages
  135--148. Springer, 2007.

\bibitem{pinchasi}
Rom Pinchasi.
\newblock A finite family of pseudodiscs must include a
  {\textquotedblleft}small{\textquotedblright} pseudodisc.
\newblock {\em {SIAM} Journal on Discrete Mathematics}, 28(4):1930--1934, jan
  2014.

\bibitem{hananirado}
Fulek Radoslav, Michael~J Pelsmajer, Marcus Schaefer, and Daniel
  {\v{S}}tefankovi{\v{c}}.
\newblock Adjacent crossings do matter.
\newblock {\em Journal of Graph Algorithms and Applications}, 16(3):759--782,
  2012.

\bibitem{smorod-onthe}
Shakhar Smorodinsky.
\newblock On the chromatic number of geometric hypergraphs.
\newblock {\em SIAM Journal on Discrete Mathematics}, 21(3):676--687, 2007.

\bibitem{surveycf}
Shakhar Smorodinsky.
\newblock Conflict-free coloring and its applications.
\newblock In {\em Geometry--Intuitive, Discrete, and Convex}, pages 331--389.
  Springer, 2013.

\bibitem{snoeyink}
Jack Snoeyink and John Hershberger.
\newblock Sweeping arrangements of curves.
\newblock In {\em Proceedings of the fifth annual symposium on Computational
  geometry}, pages 354--363. ACM, 1989.

\bibitem{gezapc}
G\'eza T\'oth.
\newblock personal communication.

\bibitem{hanani2}
William~T Tutte.
\newblock Toward a theory of crossing numbers.
\newblock {\em Journal of Combinatorial Theory}, 8(1):45--53, 1970.

\end{thebibliography}
\end{document}